\renewcommand{\doteq}{{\mathrm{\,:=\,}}}
\newcommand{\xii}{{|\xi|}}
\newcommand{\supp}{\operatornamewithlimits{supp}}
\newcommand{\R}{{\mathbb R}}
\newcommand{\Z}{{\mathbb Z}}
\newcommand{\N}{{\mathbb N}}
\newtheorem{theorem}{\bf Theorem}[section]
\newtheorem{lemma}{\bf Lemma}[section]
\newtheorem{proposition}{\bf Proposition}[section]
\newtheorem{corollary}{\bf Corollary}[section]
\theoremstyle{remark}
  \newtheorem{remark}{\sc Remark}[section]
\theoremstyle{definition}
  \newtheorem{example}{\sc Example}[section]
 \newtheorem{notation}{Notation}
 \numberwithin{equation}{section}
\begin{document}

\title[The critical exponent for semilinear  evolution equations]{The move from  Fujita  to   Kato  type exponent for a class of semilinear evolution equations with %scale-invariant
time-dependent damping}
%{Critical Exponents - From  Kato to Fujita type- for a class of semilinear evolution equations with scale-invariant time-dependent damping}

\author{ Marcelo Rempel Ebert, Jorge Marques, Wanderley Nunes do Nascimento}

\address{Marcelo Rempel Ebert, Departamento de Computa\c{c}\~ao e Matem\'atica, Universidade de S\~ao Paulo, Ribeir\~ao Preto, SP, 14040-901, Brasil}
\address{Jorge Marques
 CeBER and FEUC, University of Coimbra, Av. Dias da Silva 165,
3004-512 Coimbra, Portugal }
\address{Wanderley Nunes do Nascimento, Departamento de Matem\'atica Pura e Aplicada, Universidade Federal do Rio Grande do Sul, RS,  91509-900 , Brasil}

 \begin{abstract}
In this paper, we derive suitable optimal $L^p-L^q$ decay estimates,
$1\leq p\leq 2\leq  q\leq \infty$, for the solutions to the
$\sigma$-evolution equation, $\sigma>1$, with scale-invariant
time-dependent damping  and power
nonlinearity~$|u|^p$,
\[ u_{tt}+(-\Delta)^\sigma u + \frac{\mu}{1+t} u_t=
|u|^{p}, \quad   t\geq0, \quad x\in\R^n,
\]
where~$\mu>0$, ~$p>1$. The  critical exponent $p=p_c$  for the
global (in time) existence of small data  solutions to the Cauchy
problem is related to the long time behavior of solutions, which
changes accordingly $\mu \in (0, 1)$  or $\mu>1$.
Under the assumption of small  initial data in $L^1\cap L^2$,  we
find the critical exponent
\[
p_c=1+ \max \left\{\frac{2\sigma}{[n-\sigma+\sigma\mu]_+}, \frac{2\sigma}{n}  \right\}
=\begin{cases}
1+ \frac{2\sigma}{[n-\sigma+\sigma\mu]_+}, \quad \mu \in (0, 1)\\
1+ \frac{2\sigma}{n}, \quad \mu>1.
\end{cases}
\]
  For $\mu>1$ it  is well known as Fujita type exponent, whereas for $\mu \in (0, 1)$
one can read it as a shift of Kato exponent.
  %The novel idea in this paper is to deal with small
%value of the parameter $\mu$ by applying the stationary phase method
%to get $L^1-L^\infty$ estimates for solutions to the associate
%linear Cauchy problem.
\end{abstract}

\keywords{semilinear evolution equations,  $L^p-L^q$ estimates, critical exponent, global existence, small data solutions}

\subjclass[2010]{35L15, 35L71, 35A01, 35B33, 35E15, 35G25}

\maketitle

 \section{Introduction}

 In this paper we study the global (in time) existence of small
data solutions to the Cauchy problem for the  semilinear damped
$\sigma$-evolution equations with scale-invariant time-dependent
damping
\begin{equation}\label{eq:DPE}
\begin{cases}
u_{tt}+ (- \Delta)^\sigma u + \frac{\mu}{1+t} u_t =f(u), & t\geq0, \ x\in \mathbf{R}^n,  \\
u(0,x)=0, & x\in \mathbf{R}^n,\\
u_t(0,x)=u_1(x), & x\in \mathbf{R}^n,
\end{cases}
\end{equation}
where   $\mu>0$, $\sigma > 1$ and $f(u)=|u|^p$ for some $p>1$. The nonlinearity may have several shapes, for instance, the derived  results in this paper also hold if  $f(u)=|u|^{p-1}u$ or if
$f$ is locally Lipschitz-continuous satisfying \cite{DLR}
\[f(0)=0, \quad |f(u)-f(v)| \leq C |u-v|(|f(u)|^{p-1}+|f(v)|^{p-1}),\]
for some $p>1$. The important information is that the nonlinearity is a perturbation which may create blow-up in finite time, well known in the literature as source nonlinearity. If the  initial condition $u(0, x)$ is small, then
$f(u)$ becomes small for large $p$. For this reason one is often able to prove such a
global (in time) existence result only for some $p>p_c$.

Let us introduce some
previous results to the  Cauchy problem for the  semilinear free
$\sigma-$evolution equations
\begin{equation}\label{eq:classicNint}
\begin{cases}
u_{tt}+ (- \Delta)^\sigma u  =|u|^p,   \\
u(0,x)=0, \quad
u_t(0,x)=u_1(x).
\end{cases}
\end{equation}
We begin with results for
$\sigma=1$. If $1<p<{p_K}(n)=\frac{n+1}{[n-1]_+}$ Kato \cite{K}
proved the nonexistence of global generalized solutions to
\eqref{eq:classicNint}, for  small initial data with compact
support. On the other hand, John \cite{J} showed that $p=1+ \sqrt 2$
is the critical exponent for the global existence of classical
solutions with small initial data in space dimension $n=3$. A bit
later, Strauss \cite{Strauss} conjectured that the critical exponent
$p_S(n)$, $n\geq2$, is the positive root of
\[(n-1)p^2-(n+1)p-2=0 .\]
Glassey (\cite{G1}, \cite{G2}) solved
this conjecture for classical solutions in space dimension
$n=2$. However, in space dimensions $n>3$, Sideris \cite{Si} proved the
nonexistence of global solutions in $C([0,\infty)\times
L^{\frac{2(n+1)}{n-1}})$ with arbitrarily small
initial data for~$1<p< p_S(n)$, even under the assumption of spherical
symmetry. Later in the supercritical case~$p>
p_S(n)$, Lindblad and Sogge \cite{LS}(see
references therein for further reported results) proved a global
existence result up to $n\leq 8$ and for all $n$ in the case of
radial initial data (see also \cite{Kubo} for the case of odd space dimension).
In \cite{GLS}, the authors removed the assumption of spherical symmetry.\\
Then, for $\sigma>1$ and for space dimensions $1 \leq n \leq
2\sigma$,   in \cite{EL} it was obtained the critical exponent to
\eqref{eq:classicNint},
${p_K}(n)=\frac{n+\sigma}{[n-\sigma]_+}$, which
is of Kato type.

In ~\cite{TY}, the authors proved global existence of small data solutions for the semilinear damped wave equation
%
%\begin{equation}\label{eq:dampedwave}
\[
 u_{tt}-\Delta u + u_t = |u|^p, \qquad u(0,x)=u_0(x), \qquad u_t(0,x)= u_1(x),\]
 %\end{equation}
%
in the supercritical range~$p>1+2/n$, by assuming   small initial data with compact
support from the energy space. A previous existence result in space
dimensions $n = 1$
and $n = 2$ was proved in \cite{Ma}. The compact support assumption
on the initial data can be weakened. By
only assuming initial data in Sobolev spaces, the existence result
was proved in space dimensions $n = 1$ and $n = 2$ in~\cite{IMN04}, by using
energy methods, and in space dimensions ~$n\leq5$
in~\cite{N04}, by using~$L^r-L^q$ estimates, $1\leq r\leq q\leq
\infty$. Nonexistence of the global small data solution is proved
in~\cite{TY} for~$1<p< 1+2/n$ and in \cite{Z} for $p= 1+2/n$. The
critical case for more general nonlinearities has been recently
discussed in \cite{EGR}. The exponent~$p_{F}(n)\doteq 1+2/n$ is well
known as Fujita exponent and it is the critical index for the
semilinear parabolic problem~\cite{F66}:
\[
v_t -\triangle v = v^p\,, \qquad v(0,x)=v_0(x)\geq0\,.
\]
The diffusion phenomenon between linear heat and linear classical
damped wave models (see \cite{HM},  \cite{MN03},  \cite{N04} and
\cite{N03}) explains the parabolic
nature of classical damped wave
models with power nonlinearities from the point of view of decay estimates of solutions.\\
In \cite{W07} the author considered
a more general model
 \[u_{tt}-\Delta u + b(t)u_t = 0, \qquad u(0,x)=u_0(x), \qquad u_t(0,x)= u_1(x),
  \]
  with   a class of  time dependent  damping $b(t)u_t$ for which the critical
  exponent is still Fujita exponent $1+2/n$
  for the associate semilinear Cauchy problem with power nonlinearity
   $|u|^p$ (see \cite{DAL13} and \cite{DLR}).

We state now well known results for the semilinear  wave equation with scale-invariant time-dependent
damping
\begin{equation}\label{waveeq}
\begin{cases}
u_{tt}- \Delta u + \frac{\mu}{1+t} u_t =|u|^p,   \\
u(0,x)=u_0(x), \quad
u_t(0,x)=u_1(x).
\end{cases}
\end{equation}
This model is critical, in the sense that it is relevant the size of
the parameter $\mu$  to describe
the asymptotic behavior of solutions.
If $\mu\geq \frac53$ for $n=1$ or  $\mu\geq 3$ for $n=2$, by
assuming initial data in the energy spaces with additional
regularity $L^1(R^n)$, a global (in time) existence result for
(\ref{eq:DPE}) was proved in \cite{DAmmas} for $p> p_{F}(n)\doteq 1
+ \frac2n$.  %, the well known Fujita index \cite{F66}.
This result was extended by same author for higher space dimensions
$n\geq 3$ by assuming initial data in spaces with weighted norms for
$\mu\geq n+2$. The exponent $p_{F}$ is critical for this model, that
is, for $1 < p\leq p_{F}$ and suitable, arbitrarily small initial
data, there exists no global weak solution \cite{DAL13}.  In
\cite{DALR} the authors studied the special case $\mu = 2$ and
showed that the critical exponent for \eqref{waveeq} is given by
$p_c=\max\{p_{S}(n+2),  p_{F}(n)\}$. In the same paper the authors
also conjectured that  $p_c\geq \max\{p_{S}(n+\mu),  p_{F}(n)\}$ for
$\mu\in (2, n+2)$. The threshold value $\mu_{\star}$ is the solution
to $p_{S}(n+\mu_{\star})=p_{F}(n)$ and it is given by
\[\mu_{\star}=\frac{n^2+n+2}{n+2}.\]
In \cite{IS}, for suitable initial data, the authors obtained blow-up in finite time and gave the upper bound for the
lifespan of solutions to \eqref{waveeq}
if   $1<p\leq p_{S}(n+\mu)$ with $\mu \in (0, \mu_{\star})$.
It is worth
noticing that if  $\mu \in [0, \mu_{\star})$, then $p_{F}(n)<
p_{S}(n+\mu)$.\\
As far as we know, it is still a open problem to prove global
existence of small initial data solutions  for $p> p_{F}(n)$ in the
cases $\frac43<\mu< \frac53$ for $n=1$, $2<\mu< 3$ for $n=2$, or
$\mu_{\star}<\mu< n+2$ for $n\geq 3$.\\

A  related model to \eqref{waveeq} is the semilinear wave equation with scale-invariant mass and dissipation
\[
\begin{cases}
u_{tt}- \Delta u + \frac{\mu}{1+t} u_t + \frac{m^2}{(1+t)^2} u  =|u|^p,   \\
u(0,x)=u_0(x), \quad
u_t(0,x)=u_1(x).
\end{cases}
\]
For results about  existence and non-existence of global (in time) small initial data solutions, we address the reader to \cite{NPR, P2018, PR, PR2} and the references therein.

%\textcolor{green}{If the initial data are in the energy spaces
%without additional regularity $L^1(R^n)$ the critical exponent is
%$p_c(n)=1 + \frac4n$. In \cite{EM} assuming $\mu \geq 2m$ with
%$n\geq 3$ and $\frac{n}{2}<m \leq 1 + p_c(n) $ the authors showed a
%global in time existence result for $p> p_c(n) $ when initial data
%belongs to the space $H^m(\mathbf{R}^n)\times
%H^{m-1}(\mathbf{R}^n)$. Moreover, they derived a global existence
%result for the non-singular wave equation in the Einstein de Sitter
%model.}

%If we remove the assumption that the initial data are in $L^1(R^n)$
%and we only assume that they are in the energy space, then the
%critical exponent to (\ref{eq:DPE}) for $\sigma = 1$ is modified
%into $ 1 + \frac4n$ and one  may lower the thresholds required for
%$\mu$ (see Theorem 4 and Theorem 6 in \cite{DAmmas}). In \cite{EM}
%the authors derived higher order energy estimates for solutions to
%the linear problem associated to (\ref{eq:DPE}) and discussed some
%benefits  in its applications to prove global existence results.

The main goals in this paper
are to derive $L^p-L^q$ estimates and energy
estimates for solutions  to the linear Cauchy
problem associated to (\ref{eq:DPE}) and to obtain
 the critical exponent for the  global
(in time) existence of small initial data
solutions to (\ref{eq:DPE}). We conclude that $\mu=1$ is  the
threshold for the asymptotic behavior of solution to (\ref{eq:DPE}),
it means that the critical exponent is a shift of Kato type exponent
$p_K(n+\sigma\mu)\doteq \frac{n+\sigma
+\sigma\mu}{[n-\sigma+\sigma\mu]_+}$ for $0 < \mu< 1$ and  of Fujita
type  $p_{F}(n, \sigma)\doteq 1+ \frac{2\sigma}{n}$  for $\mu>1$.

The plan of the paper is the following:
\begin{itemize}
\item in Section~\ref{sec:results}, we collect and discuss our main results;
\item in Section \ref{lplqestimates}, we derive the ~$L^p-L^q$ estimates for solutions to the associate linear Cauchy problem;
\item in Section~\ref{NL}, we apply the decay estimates previously derived to prove Theorems~\ref{theoremNL2} and~\ref{theoremN1} for the nonlinear problems~\eqref{eq:DPE};
    \item  in Section~\ref{test}, we apply the test function method to prove Proposition \ref{testfuntion};
\item in Appendix, we  include  some notations, well known estimates for multipliers    and properties of special functions
used to prove our results throughout the paper.
\end{itemize}

\section{ Main results}\label{sec:results}

 Our first result is for small $\mu$ and
 $\sigma>1$, it shows that the critical exponent is a shift of the Kato exponent,  unlike other  case $\sigma=1$ where it
appears a shift of Strauss exponent \cite{DALR, IS}. In the next theorem we are going to use the following notation
\begin{equation}\label{mu}
\mu_{\sharp}=\begin{cases}
\infty, \quad if  \ \mu\leq 2 -\frac{2n}{\sigma},\\
\frac1{2\sigma}\left( \sigma -n + \sqrt{9\sigma^2-10n\sigma+n^2}\right), \quad if \  \mu> 2 -\frac{2n}{\sigma}.
\end{cases}
\end{equation}
\begin{theorem}\label{theoremNL2}
Let $\sigma>1$, $1\leq n<\sigma$,
 $1-\frac{n}{\sigma}<\mu < \min \left\{ \mu_{\sharp} ; 1 \right\}$, with $\mu_{\sharp}$ as in \eqref{mu}  and $\mu\neq 2-\frac{2n}{\sigma}$.
 If
\begin{equation}\label{kato}
  1+ \frac{2\sigma}{n-\sigma+\sigma\mu}\doteq p_K(n+\sigma\mu)<p\leq 1 + \frac{2\sigma-\sigma\mu}{[2n-2\sigma+\sigma\mu]_+}\doteq
  q_1,
\end{equation}
 then there exists
$\epsilon>0$ such that for any initial data
\[  u_1 \in \mathcal A=L^2(\mathbf{R}^n)\cap L^1(\mathbf{R}^n), \qquad  ||u_1||_\mathcal{A}\leq \epsilon,\]
there exists a unique energy solution $u\in C([0, \infty),
H^\sigma(\mathbf{R}^n))\cap C^1([0, \infty), L^2(\mathbf{R}^n)) \cap
L^{\infty}([0, \infty)\times\mathbf{R}^n) $ to (\ref{eq:DPE}).
Moreover,  for $2\leq q\leq q_1$ the solution satisfies the
following estimates
\[
||  u (t, \cdot) ||_{L^q} \lesssim
(1+t)^{-\frac{n}{\sigma}\left(1-\frac1{q}\right)+1-\mu} ||
u_1||_{\mathcal A},
\]
%\[
%||  u (t, \cdot) ||_{L^2} \lesssim (1+t)^{-\frac{n}{2\sigma}+1-\mu} || u_1||_{L^1\cap L^2},
%\]
\[
||  u (t, \cdot) ||_{L^\infty} \lesssim
(1+t)^{-\min\left\{\frac{n}{\sigma}+\mu-1, \frac{\mu}{2}\right\}} ||
u_1||_{\mathcal A},
\]
and
\[
\|   u(t,\cdot)\|_{\dot{H}^\sigma}+ \|\partial_t u(t,\cdot)\|_{L^2} \lesssim  (1+t)^{- \frac{\mu}{2 }} || u_1||_{\mathcal A}, \quad \forall t\geq 0.
\]
%and for $\gamma\in (0, \sigma]$
%  \[
%  \|   u(t,\cdot)\|_{\dot{H}^\gamma}\lesssim (1+t)^{-\min\left\{\mu-1+\frac{n+2\gamma}{2\sigma}, \frac{\mu}{2 }\right\}} || u_1||_{L^1\cap L^2}, \quad \forall t\geq 0
%\]
\end{theorem}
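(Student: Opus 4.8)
The plan is to realize the solution as the fixed point of the Duhamel operator associated with the linear equation and to close the argument by Banach's fixed point theorem, feeding in the $L^p$--$L^q$ decay estimates derived in Section~\ref{lplqestimates}. Writing $E_1(t,s)u_1$ for the value at time $t$ of the solution of the homogeneous problem $v_{tt}+(-\Delta)^\sigma v+\frac{\mu}{1+t}v_t=0$ with Cauchy data $v(s,\cdot)=0$, $\partial_t v(s,\cdot)=u_1$, Duhamel's principle recasts \eqref{eq:DPE} as
\[
u(t,x)=E_1(t,0)u_1(x)+\int_0^t E_1(t,s)\,|u(s,\cdot)|^p\,ds\doteq N[u](t,x).
\]
I would then introduce the Banach space $X$ of functions in $C([0,\infty),H^\sigma)\cap C^1([0,\infty),L^2)\cap L^\infty([0,\infty)\times\R^n)$ with norm
\[
\|u\|_X\doteq\sup_{t\ge0}\Bigl\{\sup_{2\le q\le q_1}(1+t)^{\frac{n}{\sigma}\left(1-\frac1q\right)-1+\mu}\|u(t,\cdot)\|_{L^q}+(1+t)^{\frac{\mu}{2}}\bigl(\|u(t,\cdot)\|_{\dot H^\sigma}+\|\partial_t u(t,\cdot)\|_{L^2}\bigr)\Bigr\}.
\]
The weights are chosen so that $\|u\|_X<\infty$ is equivalent to the $L^q$ and energy estimates claimed in the theorem; thus it suffices to prove that $N$ has a unique small fixed point in a ball of $X$.

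The self-mapping and contraction properties both reduce to the linear bounds of Section~\ref{lplqestimates}. For the free term, $\|E_1(t,0)u_1\|_X\lesssim\|u_1\|_\mathcal{A}$ is exactly the content of those estimates. For the integral term I would apply the linear decay to the source $|u(s,\cdot)|^p$ treated as Cauchy velocity at time $s$, use $\||u(s,\cdot)|^p\|_{L^m}=\|u(s,\cdot)\|_{L^{pm}}^p$ together with the $X$-norm to bound the data, and thereby reduce the $L^q$ estimate of $N[u]$ to a time integral of the schematic shape
\[
\int_0^t D(t,s)\,(1+s)^{-p\left(\frac{n}{\sigma}\left(1-\frac1{p}\right)-1+\mu\right)}\,ds\lesssim(1+t)^{-\frac{n}{\sigma}\left(1-\frac1q\right)+1-\mu},
\]
with $D(t,s)$ the decay kernel furnished by the linear $L^1\cap L^2\to L^q$ estimate; the energy norms are treated by the analogous integrals. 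The contraction estimate follows along the same lines from $\bigl||u|^p-|v|^p\bigr|\lesssim|u-v|(|u|^{p-1}+|v|^{p-1})$ and Hölder's inequality, producing the factor $\|u-v\|_X(\|u\|_X^{p-1}+\|v\|_X^{p-1})$; choosing $\|u_1\|_\mathcal{A}\le\epsilon$ small then makes $N$ a contraction on a small ball, whose fixed point is the desired global solution. The $L^\infty$ bound of the theorem is either contained in the norm (when $q_1=\infty$) or recovered afterwards, its rate being the minimum of the $q\to\infty$ limit $\frac{n}{\sigma}+\mu-1$ of the $L^q$ rate and the energy rate $\frac{\mu}{2}$, which explains the $\min$ in the statement.

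The crux of the argument, and the step I expect to be hardest, is the verification of the time-integral estimate displayed above, where the nonautonomous kernel $D(t,s)$ produced by the scale-invariant damping must be combined with the decay of $\|u(s,\cdot)\|_{L^{pm}}^p$. I would split the integration into $s\in[0,t/2]$ and $s\in[t/2,t]$, since the scale-invariant propagator decays at genuinely different rates according to whether $s$ is comparable to $t$. The lower bound $p>p_K(n+\sigma\mu)=1+\frac{2\sigma}{n-\sigma+\sigma\mu}$ is precisely the threshold that renders the exponent of $(1+s)$ in the integrand integrable and thereby prevents any polynomial or logarithmic loss in $t$; note that $p_K$ is finite and positive exactly because the hypothesis $\mu>1-\frac{n}{\sigma}$ forces $n-\sigma+\sigma\mu>0$. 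The upper restriction $p\le q_1$, in turn, guarantees that every Hölder exponent $pm$ arising when splitting $|u|^p$ remains inside the admissible range $[2,q_1]$ of the linear estimates, and together with $\mu<\min\{\mu_\sharp;1\}$ it keeps the $L^q$, $L^\infty$ and energy decay rates mutually compatible, so that the three weighted norms close simultaneously.
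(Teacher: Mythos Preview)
Your overall scheme---Duhamel formulation, Banach fixed point in a weighted evolution space, feeding in the linear $(L^1\cap L^2)\to L^q$ and energy estimates---is precisely the paper's approach, and the reduction to a time-integral inequality is correct in spirit. However, there is a genuine gap in your handling of the $L^\infty$ norm, and your justification of the upper restriction $p\le q_1$ is not right.

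The linear estimates in Theorem~\ref{theoremlinear} always require \emph{both} the $L^1$ and the $L^2$ norms of the source, so at each step you must control $\||u(s,\cdot)|^p\|_{L^2}=\|u(s,\cdot)\|_{L^{2p}}^p$. Your claim that ``$p\le q_1$ guarantees every H\"older exponent $pm$ remains in $[2,q_1]$'' fails for $m=2$: one needs $2p\le q_1$, not merely $p\le q_1$. When $\mu>2-\tfrac{2n}{\sigma}$ (a nonempty part of the hypothesis, since $2-\tfrac{2n}{\sigma}\in(0,1)$ for $n<\sigma$) one has $q_1<\infty$, and there is a genuine subrange $p\le q_1<2p$ allowed by the theorem in which $L^{2p}$ lies outside $[2,q_1]$. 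In this regime the $L^\infty$ norm is needed \emph{inside} the contraction, via the interpolation
\[
\|u\|_{L^{2p}}\le \|u\|_{L^{q_1}}^{\theta}\,\|u\|_{L^\infty}^{1-\theta},\qquad \theta=\frac{q_1}{2p},
\]
so it cannot be ``recovered afterwards''; it must be part of the fixed-point norm from the outset. The paper therefore includes the weighted $L^\infty$ term in the $Z(T)$-norm and treats the two subcases $2p\le q_1$ and $q_1<2p$ separately (using different forms of the linear estimate in Theorem~\ref{theoremlinear}(ii)--(iii) in the latter). A second, minor point: the splitting $[0,t/2]\cup[t/2,t]$ you propose is unnecessary here, because the scale-invariant propagator already factors as $(1+t)^{a}(1+s)^{b}$, so the $s$-integral can be evaluated directly; this is how the paper proceeds.
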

\begin{remark} The condition $\mu < \min \left\{ \mu_{\sharp} ; 1 \right\}  $ implies that the range for $p$ in \eqref{kato} is not empty, i.e., if
$\mu> 2 -\frac{2n}{\sigma}$,  $\mu_{\sharp}$ is the positive root
of $\sigma \mu^2+(n-\sigma)\mu+ 2(n-\sigma)=0$. In particular, $\mu_{\sharp}\geq 1$ if $3n\leq 2\sigma$.
Moreover, %$p\leq 1 + \frac{2\sigma-\sigma\mu}{[2n-2\sigma+\mu\sigma]_+}$
$\mu= 2-\frac{2n}{ \sigma}\left(1-\frac1{q_1}\right)$ and $\mu<
2-\frac{2n}{ \sigma}\left(1-\frac1{q}\right)$ for all $q<q_1$. If
$\mu\leq 2-\frac{2n}{\sigma}$, then $q_1=\infty$ in (\ref{kato}).
\end{remark}
\begin{remark}
If $\mu = 2 - \frac{2n}{\sigma}$, under the assumptions of Theorem  \ref{theoremNL2}, it is possible to obtain global (in time) unique energy solutions to \eqref{eq:DPE}, however a logarithm term appears on the estimates  for the solutions.
\end{remark}
\begin{remark}
In the limit value of $\mu$, $\mu=0$, we get
$p_K(n)=\frac{n+\sigma}{[n-\sigma]_+}$, so  we recover the critical
index obtained in \cite{EL}.
\end{remark}
\begin{remark} Using the properties of the Hankel functions and the representation \eqref{Hankel}
 we conclude that
 \[g_{j,k}(t)=\int_{\R^n}\xii^{2j\sigma}\,|\partial_t^k\hat u(t, \xi)|^2 d\xi, \quad j+k\leq 1,
 \]
are continuous functions on $[0, \infty)$. The validity of the Fourier inversion formula implies $ u \in C([0, \infty),
H^\sigma(\mathbf{R}^n))\cap C^1([0, \infty), L^2(\mathbf{R}^n))$.
But the same argument can not be used to conclude the continuity of $h(t)=|| u (t, \cdot) ||_{L^\infty}$.
  \end{remark}
\begin{example}
For the plate equation $\sigma=2$ in one space dimension $n=1$, the conclusions of Theorem \ref{theoremNL2} hold for all   $p>1+ \frac4{2\mu-1}$ and $\frac12<\mu<1$.
\end{example}
%
%It is important to point out that in Theorem
%\ref{theoremN1} we enlarge the admissible range of the parameter
%$\mu$ if we compare with a result already
%obtained for $\mu\geq \frac{n}{\sigma}+2$ in \cite{Pa}.
The next result is an extension  of  Theorem  2 in \cite{DAmmas}
done for the case $\sigma=1$.
\begin{theorem}\label{theoremN1}
Let $\sigma>1$, $ n < 2\sigma$ and  $\mu > \max \left\{ \frac{n}{\sigma}+ \frac{2n}{n+2\sigma} ; 1 \right\}$, $\mu\neq
\frac{2n}{\sigma} $ and $\mu\neq \frac{n}{\sigma}+2$. If
\begin{equation}\label{fujita}
  1+ \frac{2\sigma}{n}<p\leq  \frac{n}{[2n-\sigma\mu]_+}\doteq \frac{q_0}{2}, \end{equation}
   then there exists
$\epsilon>0$ such that for any initial data
\[  u_1 \in \mathcal A=L^2(\mathbf{R}^n)\cap L^1(\mathbf{R}^n), \qquad  ||u_1||_\mathcal{A}\leq \epsilon,\]
there exists a unique energy solution $u\in C([0, \infty),
H^\sigma(\mathbf{R}^n))\cap C^1([0, \infty), L^2(\mathbf{R}^n))\cap
L^{\infty}([0, \infty)\times\mathbf{R}^n) $ to (\ref{eq:DPE}).
Moreover, for $2\leq q\leq q_0$ the solution satisfies the following
estimates
\begin{equation}\label{ineqnonlinear1ell}
||  u (t, \cdot) ||_{L^q} \lesssim (1+t)^{-\frac{n}{2\sigma}\left(1-\frac1q\right)} || u_1||_{\mathcal A}, \quad \forall t\geq 0,
\end{equation}
\begin{equation}\label{ineqnonlinear1ell2}
||  u (t, \cdot) ||_{L^\infty} \lesssim
(1+t)^{-\min\left\{\frac{n}{2\sigma}, \frac{\mu}{2 }\right\}} ||
u_1||_{\mathcal A}, \quad \forall t\geq 0,
\end{equation}
\begin{equation}\label{ineqnonlinearfrac}
  \|   u(t,\cdot)\|_{\dot{H}^\sigma}+ \| \partial_t u(t,\cdot)\|_{L^2}\lesssim (1+t)^{-\min\left\{\frac{n}{2\sigma}+1, \frac{\mu}{2 }\right\}} || u_1||_{\mathcal A}, \quad \forall t\geq 0.
\end{equation}
%and for $\gamma\in (0, \sigma]$
%  \begin{equation}\label{ineqnonlinearfrac}
%  \|   u(t,\cdot)\|_{\dot{H}^\gamma}\lesssim (1+t)^{-\min\left\{\frac{n+2\gamma}{2\sigma}, \frac{\mu}{2 }\right\}} || u_1||_{L^1\cap L^2}, \quad \forall t\geq 0
%\end{equation}
%and
%\begin{equation}\label{ineqnonlinear2ell}
%\| \partial_t u(t,\cdot)\|_{L^2} \lesssim (1+t)^{-\min\{\frac{n}{2\sigma}+1, \frac{\mu}{2}\}}  || u_1||_{\mathcal A}, \quad \forall t\geq 0.
%\end{equation}
%\begin{equation}\label{ineqnonlinear2ell}
%\|(- \Delta)^{\frac{j\sigma}{2}} \partial_t^k u(t,\cdot)\|_{L^2} \lesssim (1+t)^{-\min\{1, \frac{\mu}{2}\}} || u_1||_{L^2}, \quad \forall t\geq 0.
%\end{equation}
\end{theorem}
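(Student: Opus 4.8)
The plan is to construct the solution as the unique fixed point of the operator arising from Duhamel's principle, working in a complete metric space whose norm encodes exactly the decay rates \eqref{ineqnonlinear1ell}--\eqref{ineqnonlinearfrac}. Denoting by $E_1(t,s)$ the propagator that maps Cauchy data prescribed at time $s$ to the solution of the homogeneous linear equation at time $t$, a mild solution satisfies
\[
u(t,\cd)=E_1(t,0)u_1+\int_0^t E_1(t,s)\,|u(s,\cd)|^p\,ds\doteq N[u](t,\cd).
\]
I would take $X(T)$ to be the space of functions $u\in C([0,T],H^\si)\cap C^1([0,T],L^2)\cap L^\infty([0,T]\times\R^n)$ equipped with the norm obtained by taking, for each of the $L^q$ (with $2\le q\le q_0$), $L^\infty$, $\dot H^\si$ and $\pa_t L^2$ norms of $u(t,\cd)$, the supremum over $t\in[0,T]$ after dividing by the corresponding weight on the right-hand side of \eqref{ineqnonlinear1ell}--\eqref{ineqnonlinearfrac}. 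With this choice $\|u\|_{X(T)}<\infty$ is equivalent to the validity of all the claimed estimates, and the problem reduces to showing that $N$ is a contraction on a small ball of $X(T)$, uniformly in $T$.

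The linear term $E_1(t,0)u_1$ is controlled directly by the $L^p$--$L^q$ and energy estimates of Section~\ref{lplqestimates} applied with data $u_1\in\mathcal A=L^1\cap L^2$, giving $\|E_1(\cd,0)u_1\|_{X(T)}\lesssim\|u_1\|_{\mathcal A}$. For the nonlinear term the strategy is to treat $|u(s,\cd)|^p$ as Cauchy data at time $s$ and apply the same linear estimates to $E_1(t,s)$. The source is measured in $\mathcal A$ through the elementary identities $\||u(s)|^p\|_{L^1}=\|u(s)\|_{L^p}^p$ and $\||u(s)|^p\|_{L^2}=\|u(s)\|_{L^{2p}}^p$. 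Here the hypotheses on $p$ enter twice: since $n<2\si$ we have $2\si/n>1$, hence $p>1+2\si/n>2$, so the exponent $p$ lies in $[2,q_0]$; and the upper bound $p\le q_0/2$ guarantees $2p\le q_0$, so that both $\|u(s)\|_{L^p}$ and $\|u(s)\|_{L^{2p}}$ are controlled by the $X(T)$-norm through its $L^q$ part. This yields $\||u(s)|^p\|_{\mathcal A}\lesssim (1+s)^{-\frac{n}{2\si}(p-1)}\|u\|_{X(T)}^p$, the slower-decaying $L^1$ contribution being dominant.

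The heart of the argument, and the step I expect to be the main obstacle, is the estimate of the Duhamel integral
\[
\Big\|\int_0^t E_1(t,s)\,|u(s,\cd)|^p\,ds\Big\|_{\bullet}\lesssim \int_0^t K_{\bullet}(t,s)\,\||u(s)|^p\|_{\mathcal A}\,ds
\]
for each norm $\bullet$ in the definition of $X(T)$, where $K_{\bullet}(t,s)$ is the relevant linear kernel from Section~\ref{lplqestimates}. I would split $[0,t]$ into $[0,t/2]$ and $[t/2,t]$. On $[0,t/2]$ the kernel reproduces the target weight in $t$ and the integral reduces to $\int_0^{t/2}(1+s)^{-\frac{n}{2\si}(p-1)}\,ds$, which converges precisely because the Fujita condition $p>1+\frac{2\si}{n}$ makes the exponent strictly below $-1$; on $[t/2,t]$ the decay is instead supplied by $K_{\bullet}(t,s)$ together with the comparison $(1+s)\sim(1+t)$. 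Combining the two pieces recovers the exact weights of \eqref{ineqnonlinear1ell}--\eqref{ineqnonlinearfrac}. The technical restrictions $\mu>\frac{n}{\si}+\frac{2n}{n+2\si}$, $\mu\ne\frac{2n}{\si}$ and $\mu\ne\frac{n}{\si}+2$ enter exactly here, excluding the borderline cases in which these time integrals would otherwise produce an additional logarithmic factor or a slower rate. The $L^\infty$ estimate \eqref{ineqnonlinear1ell2}, with its $\min\{\frac{n}{2\si},\frac\mu2\}$ structure, follows from the same scheme, the hypothesis $n<2\si$ yielding the embedding $H^\si\hookrightarrow L^\infty$ so that the two competing rates (the diffusive $\frac{n}{2\si}$ and the energy rate $\frac\mu2$) appear by interpolation between the $L^q$ and $\dot H^\si$ controls.

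Finally, the contraction property is obtained by the same estimates applied to the difference, starting from the pointwise inequality $\big||u|^p-|v|^p\big|\lesssim|u-v|\big(|u|^{p-1}+|v|^{p-1}\big)$ and H\"older's inequality, which gives
\[
\|N[u]-N[v]\|_{X(T)}\lesssim\big(\|u\|_{X(T)}^{p-1}+\|v\|_{X(T)}^{p-1}\big)\,\|u-v\|_{X(T)}.
\]
Together with $\|N[u]\|_{X(T)}\lesssim\|u_1\|_{\mathcal A}+\|u\|_{X(T)}^p$, for $\|u_1\|_{\mathcal A}\le\epsilon$ small enough the operator $N$ maps a suitable small closed ball of $X(T)$ into itself and is a contraction there, with constants independent of $T$. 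Banach's fixed point theorem then yields a unique solution in each $X(T)$, and letting $T\to\infty$ produces the global solution satisfying all the stated estimates; the regularity $u\in C([0,\infty),H^\si)\cap C^1([0,\infty),L^2)\cap L^\infty([0,\infty)\times\R^n)$ follows from the continuity properties of the propagator together with a dominated-convergence argument for the Duhamel integral.
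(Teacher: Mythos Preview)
Your fixed-point scheme, the choice of solution space, and the control of $|u|^p$ in $L^1\cap L^2$ via $\|u\|_{L^p}^p$ and $\|u\|_{L^{2p}}^p$ (with $p,2p\in[2,q_0]$) are exactly the paper's approach. The gap is in the Duhamel step, and it has two coupled pieces.

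First, you equip $X(T)$ with the weights taken literally from \eqref{ineqnonlinear1ell}, namely $(1+t)^{-\frac{n}{2\sigma}(1-\frac1q)}$. The linear theory (Corollary~\ref{corolinear}(i)) actually yields the stronger rate $(1+t)^{-\frac{n}{\sigma}(1-\frac1q)}$, and the paper's proof builds the norm on \emph{this} stronger rate (so $\|u(s)\|_{L^p}^p\lesssim(1+s)^{-\frac{n}{\sigma}(p-1)}\|u\|_{Z}^p$, not $(1+s)^{-\frac{n}{2\sigma}(p-1)}$). Second, and more essentially, the propagator for a scale-invariant damping is \emph{not} of the form $K(t-s)$: Theorem~\ref{theoremlinear}(i) shows that $E_1(t,s)$ acting on $L^1\cap L^2$ data carries a growing factor $(1+s)$ (or $(1+s)^{1+\frac{n}{2\sigma}}$ on the $L^2$ part). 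Your sentence ``on $[0,t/2]$ the kernel reproduces the target weight in $t$ and the integral reduces to $\int_0^{t/2}(1+s)^{-\frac{n}{2\sigma}(p-1)}\,ds$'' drops this $(1+s)$ factor. With your norm and the correct kernel you would need $\int_0^t(1+s)^{1-\frac{n}{2\sigma}(p-1)}\,ds<\infty$, i.e.\ $p>1+\frac{4\sigma}{n}$, which misses the Fujita threshold.

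The fix is exactly what the paper does: put the weight $(1+t)^{\frac{n}{\sigma}(1-\frac1q)}$ in the norm; then the kernel $(1+t)^{-\frac{n}{\sigma}(1-\frac1q)}(1+s)$ factorizes, the $(1+t)$ piece comes out of the integral with no splitting into $[0,t/2]\cup[t/2,t]$ needed, and the remaining $s$-integral is $\int_0^t(1+s)^{1-\frac{n}{\sigma}(p-1)}\,ds$, bounded precisely when $p>1+\frac{2\sigma}{n}$. The $L^\infty$ bound in the paper comes directly from the linear $L^1\cap L^2\to L^\infty$ estimate in Theorem~\ref{theoremlinear}(i)--(ii), not via $H^\sigma\hookrightarrow L^\infty$; your interpolation idea is plausible but would have to be checked against the actual kernel growth in $s$ as well.
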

\begin{remark}\label{hypothesis} The condition $\mu > \frac{n}{\sigma}+ \frac{2n}{n+2\sigma}$ implies that the range for $p$ in \eqref{fujita} is not empty. Moreover, $q\leq q_0$, with $q_0$ defined by \eqref{fujita},    is
equivalent to $\mu\geq \frac{2n}{
\sigma}\left(1-\frac1{q}\right)$. If $\mu\geq
\frac{2n}{\sigma}$ then $q_0=\infty$ in \eqref{fujita}.
\end{remark}
%\begin{remark}\label{q0} Let $q_0=1 + \frac{\sigma\mu}{[2n-\sigma\mu]_+}$.
%Then $q\leq q_0$,    is
%equivalent to $\mu\geq \frac{2n}{
%\sigma}\left(1-\frac1{q}\right)$.
%\end{remark}
%\begin{remark}\label{no}
%It is useful to remark that if $q\leq \frac{n}{[n-\sigma]_+}$ then   $\max\left\{1 , \frac{n}{\sigma}\left(1-\frac1{q}\right)\right\}=1$ in Theorem \ref{theoremlinear} . Moreover,
%if $1<\mu\leq \min\{ 2, \frac{2n}{\sigma}\}$, then $q_0\leq \frac{n}{[n-\sigma]_+}$.
%\end{remark}
\begin{remark} The cases $\mu=1$, $\mu=
\frac{2n}{\sigma} $ and $\mu= \frac{n}{\sigma}+2$ can also be included in Theorem \ref{theoremN1}, but it appears an additional logarithm loss on the derived estimates for the solutions. Moreover, to obtain the result for  higher space dimension $n\geq 2\sigma$, one also  have to derive  $L^p-L^q$ estimates, with  $p\in [1, 2]$,  for solutions to the linear problem  at low frequencies and combine it with  the  already obtained estimates at high frequencies.
  \end{remark}
    \begin{example}
For the plate equation $\sigma=2$,  Theorem \ref{theoremN1} applies  for  $\mu>1$ and $\mu\neq \frac52$ if $n=1$, for $\mu>\frac53$ and $\mu\notin \{2;  3\}$ if $n=2$ and  for $\mu> 2+ \frac5{14}$ and $\mu\notin \{3; \frac72\}$ if $n=3$.
\end{example}

For the sake of simplicity, in the next two results  we restrict our analysis for integer $\sigma$. However, the test function method
was recently applied in \cite{DF} for a class of $\sigma-$ evolution operators with  non-integer $\sigma$.

First let us discuss into details the non-existence result  for the non-effective damping cases $0<\mu\leq 1$.
The proof of the next  result  can be obtained with a slightly change in the proof of  Theorem 1.5 in \cite{W}:
\begin{proposition}\label{testfuntion}
\label{thm:test0dec} Let~$\sigma\in\N$,
$0<\mu\leq 1$ and
\[ 1<p\leq p_K(n+\sigma\mu)\doteq \frac{n+\sigma +\sigma\mu}{[n-\sigma+\sigma\mu]_+}.\]
If $u_1 \in L^1(\mathbb{R}^n)$ such that
\begin{equation}\label{blowup}
\int_{\R^n} u_1(x)\, dx>0,\end{equation} then
there exists no global (in time) weak solution $u\in
L^p_{loc}([0,\infty)\times\R^n)$ to~\eqref{eq:DPE}.
\end{proposition}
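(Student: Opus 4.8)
The goal is to show that under the assumption $\int_{\R^n} u_1(x)\,dx > 0$, no global weak solution can exist in the subcritical and critical range $1 < p \leq p_K(n+\sigma\mu)$. The overall strategy is to assume for contradiction that a global weak solution $u \in L^p_{loc}$ exists, test the weak formulation of the equation against a suitable cut-off function, and derive an inequality that forces the spacetime integral of $|u|^p$ to vanish, contradicting the positivity hypothesis on $u_1$.

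First I would write down the weak formulation. Multiplying \eqref{eq:DPE} by a test function $\varphi(t,x) \in C_0^\infty([0,\infty)\times\R^n)$ and integrating by parts (using the zero initial position $u(0,x)=0$ and the given $u_t(0,x)=u_1(x)$), I obtain an identity of the schematic form
\[
\int_0^\infty\!\!\int_{\R^n} |u|^p\varphi \,dx\,dt + \int_{\R^n} u_1(x)\,\varphi(0,x)\,dx
= \int_0^\infty\!\!\int_{\R^n} u\,\Big(\varphi_{tt} + (-\Delta)^\sigma\varphi - \partial_t\!\big(\tfrac{\mu}{1+t}\varphi\big)\Big)\,dx\,dt.
\]
The damping term requires care: integrating $\frac{\mu}{1+t}u_t$ by parts in $t$ produces both a boundary contribution at $t=0$ (absorbed into the $u_1$ term) and the adjoint operator $-\partial_t\big(\frac{\mu}{1+t}\varphi\big)$ acting on $\varphi$. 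The scale-invariant weight $\frac{\mu}{1+t}$ is exactly what shifts the effective dimension from $n$ to $n+\sigma\mu$, which is the source of the Kato-type exponent $p_K(n+\sigma\mu)$.

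Next I would choose the test function with the correct parabolic-type scaling adapted to the $\sigma$-evolution structure. The standard choice is $\varphi_R(t,x) = \eta\big(\tfrac{t}{R^?}\big)\,\psi\big(\tfrac{|x|}{R}\big)^{2p'}$ (or a product of smooth cut-offs), where the time and space scalings must be balanced so that both $(-\Delta)^\sigma\varphi$ and $\varphi_{tt}$ scale compatibly with the weighted first-order term; since $(-\Delta)^\sigma$ has order $2\sigma$, one matches $t \sim R^\sigma$ against $|x|\sim R$. Applying Hölder's inequality with exponents $p$ and $p'$ to the right-hand side, and using that $\int u_1\varphi(0,x)\,dx \to \int u_1\,dx > 0$, I would bound the right-hand side by $C\, I_R^{1/p} R^{\theta}$, where $I_R = \int |u|^p\varphi_R$ and $\theta$ is an explicit exponent computed from the scaling. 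Absorbing $I_R^{1/p}$ yields $I_R \lesssim R^{\theta p'}$; the critical/subcritical condition $p \leq p_K(n+\sigma\mu)$ is precisely what makes the scaling exponent $\theta p' \leq 0$, so that letting $R\to\infty$ forces $\int_0^\infty\!\int_{\R^n}|u|^p = 0$ (with a separate limiting argument in the critical case where $\theta p'=0$, using that $I_R$ is then bounded and the integrand over the annular support must vanish). This contradicts $\int u_1 > 0$.

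The main obstacle I anticipate is twofold. First, handling the fractional operator $(-\Delta)^\sigma$ in the test function method: unlike the local Laplacian, $(-\Delta)^\sigma$ is nonlocal, so $(-\Delta)^\sigma\varphi_R$ is not supported in the support of $\varphi_R$, and one must either restrict to integer $\sigma$ (as the Proposition explicitly does, $\sigma\in\N$, making $(-\Delta)^\sigma$ a genuine differential operator with the desired local support and clean scaling $R^{-2\sigma}$) or invest substantial effort in nonlocal tail estimates. Since the statement assumes $\sigma\in\N$, the integer case lets the cut-off derivatives remain supported in the annulus $R \lesssim |x| \lesssim 2R$ and scale cleanly, which is exactly why the restriction is imposed. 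Second, the critical case $p = p_K(n+\sigma\mu)$ is genuinely harder than the strictly subcritical case: there the naive scaling gives only boundedness of $I_R$, and one must extract the logarithmic refinement — typically by restricting the time integration to a dyadic region and using the convergence of $I_R$ to control the annular remainder, forcing it to zero. Since the authors explicitly note that the proof follows Theorem 1.5 in \cite{W} with minor modifications, I would lean on that reference for the critical-case bookkeeping rather than reproving it from scratch.
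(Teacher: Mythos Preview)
Your overall framework is right, and you correctly identify the hyperbolic scaling $t\sim R^\sigma$, $|x|\sim R$ and the role of $\sigma\in\N$. But there is a genuine gap in the mechanism you describe for how the shift $n\mapsto n+\sigma\mu$ arises.

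With your plain cutoff test function $\varphi_R(t,x)=\eta(R^{-\sigma}t)\psi(R^{-1}x)$, the adjoint of the damping term is
\[
-\partial_t\!\Big(\tfrac{\mu}{1+t}\varphi_R\Big)=\tfrac{\mu}{(1+t)^2}\varphi_R-\tfrac{\mu}{1+t}\partial_t\varphi_R.
\]
The second summand is supported in the time-annulus $t\sim R^\sigma$ and scales like $R^{-2\sigma}$, as you want. The first summand, however, is a \emph{mass-like} term supported on the full slab $[0,R^\sigma]\times B_R$; after Young's inequality it contributes
\[
\int_0^{R^\sigma}\!\!\int_{B_R}\varphi_R\,(1+t)^{-2p'}\,dx\,dt\ \sim\ R^n,
\]
since $\int_0^\infty(1+t)^{-2p'}\,dt<\infty$. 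This dominates the desired remainder $R^{-2\sigma p'+n+(1+\mu)\sigma}$ and destroys the scaling argument: you would only get $I_R\lesssim R^n$, which gives no contradiction.

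The paper's proof (following Wakasugi~\cite{W}) avoids this by first multiplying the equation by $g(t)=(1+t)^\mu$ and using the identity
\[
g\,u_{tt}+\tfrac{\mu}{1+t}\,g\,u_t=(gu)_{tt}-(g'u)_t,
\]
which recasts~\eqref{eq:DPE} as $(gu)_{tt}+(-\Delta)^\sigma(gu)-(g'u)_t=g|u|^p$. Now the weak formulation against $\varphi_R\psi_R$ produces only the terms $g\varphi_R''$, $g'\varphi_R'$ and $g(-\Delta)^\sigma\psi_R$, \emph{all} supported in the annular region where $g\sim R^{\sigma\mu}$. The extra factor $R^{\sigma\mu}$ in each remainder integral is precisely what shifts the scaling exponent to $-2\sigma p'+n+(1+\mu)\sigma$ and yields the threshold $p_K(n+\sigma\mu)$. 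Equivalently, one may keep the original equation but take a \emph{weighted} test function $(1+t)^\mu\eta(R^{-\sigma}t)\psi(R^{-1}x)$; the weight is chosen exactly so that the mass-like coefficient $(a-1)(a-\mu)(1+t)^{a-2}$ vanishes at $a=\mu$. Your unweighted cutoff does not accomplish this, so the argument as written would not close. (A minor side remark: since $u(0,\cdot)=0$, the damping term actually contributes \emph{no} boundary term at $t=0$; the only boundary contribution comes from integrating $u_{tt}$ by parts twice.)
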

\begin{remark}
The proof of Proposition \ref{testfuntion} also holds for   $\mu>
1$, however it is not optimal(see Proposition
\ref{testfuntionFujita}).
\end{remark}
Since
\[1+ \frac{2\sigma }{n}=\frac{n+\sigma +\sigma\mu}{n-\sigma+\sigma\mu}\]
is equivalent to $\mu=1$ and $p_K(n+\sigma\mu)<1+ \frac{2\sigma
}{n}$ for $\mu>1$, so Proposition \ref{testfuntion} is not the
counterpart of Theorem \ref{theoremN1} for $\mu>1$. Applying Theorem
2.2 in \cite{DAL13}, one may have the following  improvement of
Proposition \ref{testfuntion} and  the counterpart of Theorem
\ref{theoremN1} is obtained.
\begin{proposition}\label{testfuntionFujita}
\label{thm:test0dec}
Let~$\sigma\in\N$, $\mu> 1$ and
\[ 1<p\leq 1+ \frac{2\sigma }{n}.\]
If $u_1 \in L^1(\mathbb{R}^n)$ such that
\begin{equation}\label{blowup2}
\int_{\R^n} u_1(x)\, dx>0,\end{equation} then
there exists no global (in time) weak solution $u\in
L^p_{loc}([0,\infty)\times\R^n)$ to~\eqref{eq:DPE}.
\end{proposition}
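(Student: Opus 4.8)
The plan is to prove the blow-up statement of Proposition \ref{testfuntionFujita} by the test function method, following the line of argument indicated in the excerpt via Theorem 2.2 in \cite{DAL13}. The key observation that makes the Fujita exponent the correct threshold for $\mu>1$ is that the scale-invariant damping $\frac{\mu}{1+t}u_t$ can be absorbed by passing to a weighted $L^1$ functional. Concretely, I would introduce the spatial average $U(t)\doteq\int_{\R^n}u(t,x)\,\psi(x)\,dx$ against a suitable cut-off, and more importantly work with the full space-time weak formulation of \eqref{eq:DPE}: for every test function $\phi\in C_0^\infty([0,\infty)\times\R^n)$,
\[
\int_0^\infty\!\!\int_{\R^n}u\Bigl(\phi_{tt}+(-\Delta)^\sigma\phi-\partial_t\!\bigl(\tfrac{\mu}{1+t}\phi\bigr)\Bigr)dx\,dt
=\int_0^\infty\!\!\int_{\R^n}|u|^p\,\phi\,dx\,dt-\int_{\R^n}u_1(x)\phi(0,x)\,dx .
\]
The sign condition \eqref{blowup2}, $\int u_1\,dx>0$, enters precisely through the boundary term on the right, guaranteeing a strictly positive lower bound that a global solution would have to balance.

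First I would choose the standard separated test function $\phi(t,x)=\eta(t/R)^{2p'}\,\chi(|x|^{2\sigma}/R^{2})$ (or the analogous Strauss-type cut-off adapted to the order $2\sigma$), with $\eta,\chi$ smooth, equal to $1$ near the origin and supported in $[0,1]$, and $R\to\infty$. Because $\sigma\in\N$ the operator $(-\Delta)^\sigma$ is a genuine differential operator, so $(-\Delta)^\sigma\phi$ is supported where $\phi$ is and is controlled pointwise by $R^{-2}$ times lower-order factors; this integer restriction is exactly why the proposition is stated for $\sigma\in\N$. Next I would estimate each term on the left: the $\phi_{tt}$ term scales like $R^{-2}$, the damping term $\partial_t(\frac{\mu}{1+t}\phi)$ produces a factor $\frac{\mu}{1+t}\phi_t\sim R^{-1}(1+t)^{-1}$ together with a lower-order piece $\frac{\mu}{(1+t)^2}\phi$, and the spatial term scales like $R^{-2}$. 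Applying H\"older's inequality with exponents $p$ and $p'=p/(p-1)$ against $\int|u|^p\phi$, the crucial step is to bound each contribution by $\varepsilon\int_0^\infty\!\!\int_{\R^n}|u|^p\phi\,dx\,dt+C_\varepsilon\,R^{\,\theta}$, where the exponent $\theta$ records the scaling balance.

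The decisive computation is the scaling exponent $\theta$. With the anisotropic scaling $t\sim R$, $|x|\sim R^{1/\sigma}$, the space-time measure of the support of $\phi$ scales like $R^{1+n/\sigma}$, while the weight factors contribute $R^{-2}$ from the principal terms and the weaker $R^{-1}$ from the damping term. Carrying the H\"older bookkeeping through, the dominant contribution from the damping term yields $\theta=1+\frac{n}{\sigma}-p'$, so that $\theta\le 0$ precisely when $p'\ge 1+\frac{n}{\sigma}$, i.e.\ when $p\le 1+\frac{2\sigma}{n}$ in the subcritical case and $=$ in the critical case. In the strictly subcritical range $R^{\theta}\to0$, which forces $\int|u|^p\phi\to0$ and contradicts the strictly positive boundary term coming from \eqref{blowup2}. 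The main obstacle, as always in this method, is the critical case $p=1+\frac{2\sigma}{n}$, where $\theta=0$ and the crude bound only gives boundedness of $\int_0^\infty\!\!\int_{\R^n}|u|^p\phi\,dx\,dt$; here I would refine the argument by restricting the time integration to the dyadic annulus $t\in[R,2R]$ (so that $\phi_t$ is supported away from $t=0$), obtaining a bound by the tail $\int_{t\ge R}\!\int|u|^p\,dx\,dt$, which tends to $0$ by the already-established finiteness of the full integral, again contradicting the positive lower bound. This slicing step is exactly where Theorem 2.2 of \cite{DAL13} supplies the needed critical-case improvement, so I would invoke it rather than reprove it.
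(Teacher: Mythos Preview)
Your argument contains a genuine gap that invalidates the conclusion. The scaling you choose, $t\sim R$ and $|x|\sim R^{1/\sigma}$ (via $\eta(t/R)$ and $\chi(|x|^{2\sigma}/R^2)$), is the \emph{hyperbolic} scaling in which $t$ balances $|x|^\sigma$; this is the scaling that produces the Kato-type exponent, not Fujita. Concretely, with this scaling all three terms $\phi_{tt}$, $(-\Delta)^\sigma\phi$, and $\tfrac{\mu}{1+t}\phi_t$ scale like $R^{-2}$ (note that $\phi_t$ is supported where $t\sim R$, so $(1+t)^{-1}\sim R^{-1}$, not $O(1)$), and the H\"older bookkeeping gives the exponent $-2p'+1+\tfrac{n}{\sigma}$, not $1+\tfrac{n}{\sigma}-p'$. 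More importantly, even taking your claimed value $\theta=1+\tfrac{n}{\sigma}-p'$ at face value, the algebra is wrong: $p'\ge 1+\tfrac{n}{\sigma}$ is equivalent to $p\le 1+\tfrac{\sigma}{n}$, \emph{not} $p\le 1+\tfrac{2\sigma}{n}$. So the Fujita exponent never actually appears in your computation.

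To reach $1+\tfrac{2\sigma}{n}$ one must use the \emph{parabolic} scaling $t\sim |x|^{2\sigma}$ (e.g.\ $t\sim R^2$, $|x|\sim R^{1/\sigma}$), under which the space--time volume is $R^{2+n/\sigma}$ and the dominant term $(-\Delta)^\sigma\phi$ scales like $R^{-2}$, yielding $-2p'+2+\tfrac{n}{\sigma}\le 0\iff p\le 1+\tfrac{2\sigma}{n}$. However, a naive parabolic test function runs into trouble with the contribution $\tfrac{\mu}{(1+t)^2}\phi$ from $\partial_t(\tfrac{\mu}{1+t}\phi)$, which is $O(1)$ near $t=0$ and does not gain the required $R^{-2}$. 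This is precisely why the paper does not redo the computation but invokes Theorem~2.2 of \cite{DAL13}: the \emph{modified} test function method there builds the coefficient $b(t)=\tfrac{\mu}{1+t}$ into the time profile of the test function (so that the adjoint damping term is absorbed), and it is this modification---not merely a rescaling---that produces the Fujita threshold for $\mu>1$. Your sketch needs both the correct (parabolic) scaling and this modified test function; as written it proves neither the subcritical nor the critical case.
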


\begin{remark}
From  Theorem \ref{theoremN1} and Proposition \ref{testfuntionFujita} we conclude that for $\mu>1$ the Fujita type index
\[p=1+ \frac{2\sigma }{n}\]
 is the critical exponent to \eqref{eq:DPE}, whereas for $0<\mu\leq1$, Theorem \ref{theoremNL2} and
Proposition \ref{testfuntion} implies that the Kato type index
 \[p_K(n+\sigma\mu)=\frac{n+\sigma +\sigma\mu}{[n-\sigma+\sigma\mu]_+} \]
  is the  critical exponent to \eqref{eq:DPE}.
\end{remark}
The next remark was suggested by Prof. M. D'Abbicco and says that  Proposition \ref{testfuntion} could also be obtained by applying Theorem 2.2 in \cite{DAL13}.
\begin{remark}
Let us  consider
\[ u_{tt} + (-\Delta)^\sigma u + \frac\nu{1+t}u_t = (1+t)^{2\gamma}\,|u|^p, \]
with~$\nu>0$. Applying Theorem 2.2 in~\cite{DAL13}, one may derive a nonexistence result for
\[ 1+\gamma < p \leq p_c = 1+\frac{2(1+\gamma)\sigma}n\,.\]
%
%In particular, for~$\nu=\mu\geq1$ and~$\gamma=0$ you get~$p_c=1+2\sigma/n$ in
%
%\[ u_{tt} + (-\Delta)^\sigma u + \frac\mu{1+t}u_t = |u|^p. \]
If ~$\mu\in[0,1)$,  applying the change of variable~$v=(1+t)^{1-\mu}u$, so that
\[ v_{tt}+(-\Delta)^\sigma v + \frac{2-\mu}{1+t}v_t = (1+t)^{(p-1)(1-\mu)}\,|v|^p. \]
Setting~$\nu=2-\mu$ and~$\gamma=(p-1)(1-\mu)/2$,  Theorem 2.2 in \cite{DAL13} implies the nonexistence of solutions if
\[ 1+(p-1)\frac{1-\mu}2 < p \leq 1+\frac{(2+(p-1)(1-\mu))\sigma}n  \]
The left-hand side is clearly true, due to~$1-\mu<2$, and the
right-hand side gives the condition for the
desired critical exponent:
\[ p \leq 1+\frac{(2+(p-1)(1-\mu))\sigma}n,  \quad
i.e. \quad
 p \leq \frac{n+\sigma +\sigma\mu}{[n-\sigma+\sigma\mu ]_+}. \]
\end{remark}

\section{$L^p-L^q$ estimates for solutions}\label{lplqestimates}
Let us consider the Cauchy problem for the linear $\sigma$-evolution equation
 with scale-invariant time-dependent damping
\begin{eqnarray} \label{mainlineq}
u_{tt} + (- \Delta)^\sigma u + \frac{\mu}{1+t}u_t
=0,\,\,\,u(s,x)=0,\,\,\,u_t(s,x)=u_1(x)
\end{eqnarray}
in $[0,\infty) \times \mathbb{R}^n$, with $s\leq t$, $\mu>0$ and
$\sigma > 1$.

Taking the partial Fourier transform
with respect to the $x$ variable in
\eqref{mainlineq} we obtain
\begin{eqnarray} \label{AfterFourTrans1}
\widehat{u}_{tt} +  |\xi|^{2\sigma}\widehat{u} + \frac{\mu}{1+t}
\widehat{u_t}  = 0,
\,\,\,\widehat{u}(s,\xi)=0,\,\,\,\widehat{u}_t(s,\xi)=\widehat{u}_1(\xi).
\end{eqnarray}
According to   \cite{Pa} and \cite{Wirth}, we have
the following representation for the solution to
\eqref{AfterFourTrans1} in terms of the Hankel functions
$H^{\pm}_{\rho}$:
\begin{proposition}
Assume that $u$ solves the Cauchy problem \eqref{mainlineq} for data
$u_1\in \mathbb{S}(\mathbb{R}^n)$. Then the Fourier transform $\hat
u(t, s, \xi)$ can be represented as
\[\hat u(t, s, \xi)= \psi(t,s,\xi) \hat u_1(\xi),\]
where the multiplier $\psi$ satisfies
\begin{equation}\label{Hankel}
i|\xi|^{j\sigma}\partial_t^k\psi(t,s,\xi)=\frac{\pi}{4}\frac{(1+t)^{\rho}}{(1+s)^{\rho-1}}|\xi|^{(k+j)\sigma}
\left|
                         \begin{array}{cc}
                           H^-_{\rho}\left( (1+s)|\xi|^{\sigma} \right) & H^-_{\rho-k}\left(
                           (1+t)|\xi|^{\sigma} \right) \\
                           H^+_{\rho}\left( (1+s)|\xi|^{\sigma} \right) &
                           H^+_{\rho-k}\left((1+t)|\xi|^{\sigma} \right) \\
                         \end{array}
                       \right|
\end{equation}
with $k+j=0,1$  and
\[\rho= \frac{1-\mu}{2}.\]
\end{proposition}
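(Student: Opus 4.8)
The plan is to solve the second-order linear ODE \eqref{AfterFourTrans1} in the time variable, treating $\xi$ as a parameter, and to exhibit its normalized fundamental solution explicitly in terms of Bessel functions. For fixed $\xi\neq 0$ the map $\hat u_1(\xi)\mapsto \hat u(\cdot,s,\xi)$ is linear and the initial value problem for a second-order linear ODE with continuous coefficients has a unique solution, so it suffices to determine the multiplier $\psi$ solving \eqref{AfterFourTrans1} with the normalized data $\psi(s,s,\xi)=0$, $\pa_t\psi(s,s,\xi)=1$. Setting $\tau=1+t$ and $a=\xii^\sigma$, equation \eqref{AfterFourTrans1} reads
\[
\pa_\tau^2 v + \frac{\mu}{\tau}\pa_\tau v + a^2 v = 0, \qquad v(\tau)=\hat u(t,s,\xi).
\]
Seeking $v(\tau)=\tau^\beta w(a\tau)$ and substituting, one is led to
\[
z^2 w'' + (2\beta+\mu)\,z\,w' + \big(\beta^2+(\mu-1)\beta + z^2\big)w = 0, \qquad z=a\tau.
\]
The first-order term is eliminated precisely when $2\beta+\mu=1$, i.e. $\beta=\rho=\frac{1-\mu}{2}$, and with this choice the identity $\beta^2+(\mu-1)\beta=\rho(\rho+\mu-1)=-\rho^2$ turns the remaining equation into Bessel's equation $z^2w''+zw'+(z^2-\rho^2)w=0$ of order $\rho$.

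Since the Hankel functions $H^-_\rho,H^+_\rho$ form a fundamental system for Bessel's equation, the general solution is $v(\tau)=\tau^\rho\big(c_1 H^-_\rho(a\tau)+c_2 H^+_\rho(a\tau)\big)$. To impose the normalized data I would use the cylinder-function derivative rule $\pa_\tau\big(\tau^\rho C_\rho(a\tau)\big)=a\,\tau^\rho C_{\rho-1}(a\tau)$, valid for $C_\rho\in\{H^-_\rho,H^+_\rho\}$, which gives $\pa_\tau v=a\tau^\rho\big(c_1 H^-_{\rho-1}(a\tau)+c_2 H^+_{\rho-1}(a\tau)\big)$. Evaluating at $\tau_0=1+s$ and $z=a\tau_0$ yields a $2\times 2$ linear system for $(c_1,c_2)$ whose determinant is the cross-product $H^-_\rho(z)H^+_{\rho-1}(z)-H^+_\rho(z)H^-_{\rho-1}(z)$. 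Applying the recurrence $C_{\rho-1}=\frac{\rho}{z}C_\rho+C_\rho'$ reduces this cross-product to the Wronskian $W[H^-_\rho,H^+_\rho](z)=\frac{4i}{\pi z}$. Solving by Cramer's rule and substituting this value back produces, after combining the surviving $\tau$- and $a$-powers, the stated determinantal formula \eqref{Hankel} in the case $k=j=0$.

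The remaining cases then follow quickly. Differentiating the $k=0$ representation once in $t$ and noting that $H^\pm_\rho(z)$ are independent of $t$, the derivative rule replaces each second-column entry $H^\pm_\rho(a\tau)$ by $a\,H^\pm_{\rho-1}(a\tau)$; the extra factor $a=\xii^\sigma$ supplies the power $\xii^{(k+j)\sigma}$ and yields the $k=1$ formula, while the case $(k,j)=(0,1)$ is merely the $k=j=0$ identity multiplied by $\xii^\sigma$. The normalization is then confirmed directly from \eqref{Hankel}: at $t=s$ the two columns coincide, so the $k=0$ determinant vanishes and $\psi(s,s,\xi)=0$, whereas the $k=1$ determinant at $t=s$ equals $W$, which exactly cancels the prefactor to give $\pa_t\psi(s,s,\xi)=1$. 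I expect the principal difficulty to be computational bookkeeping rather than conceptual: keeping the convention and sign of $H^\pm_\rho$ consistent throughout, applying the cylinder-function recurrences correctly, and evaluating the Hankel cross-product, all carried out pointwise for $\xi\neq 0$ (the degenerate locus $\xi=0$ being of measure zero).
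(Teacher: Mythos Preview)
The paper does not supply its own proof of this proposition; it cites \cite{Pa} and \cite{Wirth} and states the formula. Your argument is the standard derivation that those references carry out: reduction of the transformed ODE to Bessel's equation via the substitution $v(\tau)=\tau^\rho w(a\tau)$ with $\rho=\frac{1-\mu}{2}$, construction of the normalized fundamental solution from the Hankel pair $H^\pm_\rho$, and evaluation of the constants through the cross-product $H^-_\rho H^+_{\rho-1}-H^+_\rho H^-_{\rho-1}=W[H^-_\rho,H^+_\rho]=\tfrac{4i}{\pi z}$. The computations you sketch (the Bessel reduction, the differentiation rule $\partial_\tau(\tau^\rho C_\rho(a\tau))=a\tau^\rho C_{\rho-1}(a\tau)$, and the Wronskian identity) are all correct, and the $k=1$ and $(k,j)=(0,1)$ cases follow exactly as you say. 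This is the expected approach and it is carried through without gaps.
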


In order to derive estimates for $\widehat{u}$ and its derivatives,
we divide the extended phase space into zones to analyse the
behavior of the Hankel functions
$H^{\pm}_{\rho}$(see Lemma \ref{hankel} in Appendix):
\[ Z_{high} = \{ \xi ; |\xi| \geq 1 \} \quad and \quad Z_{low} = Z_1 \cup Z_2 \cup Z_3\]
where
\[Z_1 = \{ \xi ; (1+s)^{-1}\leq |\xi|^\sigma \leq 1 \} ; \,\,\,  Z_2 = \{ \xi ;(1+s)|\xi|^\sigma \leq  1 \leq (1+t)|\xi|^\sigma  \} ; \,\,\, Z_3 = \{ \xi ; (1+t)|\xi|^\sigma \leq 1 \}.  \]
We consider the cut-off function $\chi \in C^{\infty}(\mathbb{R}^n)$  with
$\chi(r)=1$ for $r\leq \frac12$ and $\chi(r)=0$ for $r\geq 1$ and define
\begin{eqnarray*}
&\chi_1(s,\xi)=1- \chi((1+s)|\xi|^\sigma), \\
&\chi_2(t, s,\xi)= \chi((1+s)|\xi|^\sigma)\left( 1 - \chi((1+t)|\xi|^\sigma)\right) , \\
&\chi_3(t, s,\xi)= \chi((1+s)|\xi|^\sigma) \chi((1+t)|\xi|^\sigma),
\end{eqnarray*}
such that $ \chi_1+\chi_2+\chi_3=1$. In the following we decompose the multiplier
\[m(t, s, \xi)=|\xi|^{(k+j)\sigma} \left|
                         \begin{array}{cc}
                           H^-_{\rho}\left( (1+s)|\xi|^{\sigma} \right) & H^-_{\rho-k}\left(
                           (1+t)|\xi|^{\sigma} \right) \\
                           H^+_{\rho}\left( (1+s)|\xi|^{\sigma} \right) &
                           H^+_{\rho-k}\left((1+t)|\xi|^{\sigma} \right) \\
                         \end{array}
                       \right|
                       \]
as $m=(1-\chi)m + \chi m$ and  $\chi m=m\sum \chi_i$ and estimate each of the summands $(1-\chi)m$ and $m_i\doteq
m\chi_i, i=1,2,3$:

%\begin{itemize}
%%%%%%%%%%%%%%%%%%%%%%%%%%%%%%%%%%%%%%%%%%%%%%

\subsection*{Considerations in $Z_{1}$:}

In $Z_1$ we may estimate
\[
 |\chi(|\xi|)\chi_1( s,\xi) m(t, s, \xi)| \lesssim (1+s)^{-1/2}(1+t)^{-\frac12} |\xi|^{\sigma(k+j-1)}
\]
so that
\[|\chi(|\xi|)\chi_1(s,\xi)|\xi|^{j\sigma}
\partial_t^k\psi(t,s,\xi)|\lesssim
(1+s)^{\frac{\mu}2}(1+t)^{-\frac{\mu}2}|\xi|^{\sigma(k+j-1)}.
\]
By using Haussdorff-Young inequality and H\"{o}lder inequality,
setting
\[ \frac1r = \frac1{q'}-\frac1{p'}=\frac1p-\frac1q, \]
for $1\leq p\leq 2\leq q\leq\infty$ and $k+j = 0,1$ one may estimate
\begin{align*}
\|
\mathfrak{F}^{-1}(\chi(|\xi|)\chi_1(s,\xi)|\xi|^{j\sigma}\partial_t^k
\psi(t,s,\xi))\ast u_1\|_{L^q}
    & \lesssim \|  \chi(|\xi|)\chi_1(s,\xi)|\xi|^{j\sigma}\partial_t^k   \psi(t,s,\xi)\hat u_1\|_{L^{q'}}\\
     &\lesssim \| \chi(|\xi|)\chi_1(s,\xi)|\xi|^{j\sigma}\partial_t^k \psi(t,s,\xi)\|_{L^r}\|\hat u_1\|_{L^{p'}} \\
         & \lesssim\, (1+s)^{\frac{\mu}2}(1+t)^{-\frac{\mu}2}\|u_1\|_{L^p} \\
     &    \times  \begin{cases}
1, \quad r\sigma(k+j-1)+n>0 \\
\ln^{\frac{1}{r}}{(e+s)}, \quad r\sigma(k+j-1)+n = 0\\
(1+s)^{1-k-j-\frac{n}{r\sigma}}, \quad r\sigma(k+j-1)+n<0
\end{cases}\,,
      \end{align*}
thanks to
     \begin{align*} \|  |\xi|^{\sigma(k+j-1)} \|^r_{L^r(Z_1)}&=
     \int_{(1+s)^{-\frac1\sigma}\leq |\xi|\leq 1}|\xi|^{r\sigma(k+j-1)}\ d\xi\\
    &\lesssim \begin{cases}
(1+s)^{-\frac{n}{\sigma}+r(1-k-j)}, \quad r\sigma(k+j-1)+n<0 \\
\ln{(e+s)}, \quad r\sigma(k+j-1)+n = 0\\
1, \quad r\sigma(k+j-1)+n>  0.
\end{cases}
      \end{align*}
%%%%%%%%%%%%%%%%%%%%%%%%%%%%%%%%%%%%%%%%%%%%%%%

\subsection*{Considerations in $Z_{2}$:}
In $Z_2$ we may estimate
\[
 |\chi_2(t, s,\xi) m(t, s, \xi)| \lesssim
\begin{cases}
(1+s)^{-|\rho|}(1+t)^{-\frac12} |\xi|^{\sigma(k+ j-|\rho|-\frac12)}  \, \, \mbox{if} \, \,   \mu\neq 1 \\
(1+t)^{-\frac12} |\xi|^{\sigma(k+
j-\frac12)}\ln\left(\frac{1}{(1+s)|\xi|^\sigma}\right) \, \,
\mbox{if} \, \, \mu=1
\end{cases}
\]
 so that
\[|\chi_2(t, s,\xi)|\xi|^{j\sigma}
\partial_t^k\psi(t,s,\xi)|\lesssim
\begin{cases}
(1+s)^{1-\rho-|\rho|}(1+t)^{\rho-\frac12}|\xi|^{\sigma(k+j-|\rho|-\frac12)} \, \, \mbox{if} \, \,   \mu\neq 1 \\
(1+s)(1+t)^{-\frac12}|\xi|^{\sigma(k+j-\frac12)}
\ln\left(\frac{1}{(1+s)|\xi|^\sigma}\right) \, \, \mbox{if} \, \,
\mu=1.
\end{cases}
\]
If $\mu\neq 1$ and $j + k \leq 1$ then, by using
Haussdorff-Young inequality and H\"{o}lder inequality, setting
\[ \frac1r \doteq \frac1{q'}-\frac1{p'}=\frac1p-\frac1q, \]
for $1\leq p\leq 2\leq q\leq\infty$, one may estimate
\begin{align*}
\|    \mathfrak{F}^{-1}(\chi_2(t, s,\xi)|\xi|^{j\sigma}\partial_t^k
\psi(t,s,\xi))\ast u_1\|_{L^q}
    & \lesssim \|   \chi_2(t, s,\xi)|\xi|^{j\sigma}\partial_t^k   \psi(t,s,\xi)\hat u_1\|_{L^{q'}}\\
     &\lesssim \|  \chi_2(t, s,\xi)|\xi|^{j\sigma}\partial_t^k \psi(t,s,\xi)\|_{L^r}\|\hat u_1\|_{L^{p'}} \\
         & \lesssim\, (1+s)^{1-\rho-|\rho|}(1+t)^{\rho-\frac12}\|u_1\|_{L^p} \\
     &    \times  \begin{cases}
(1+s)^{-\frac{n}{r\sigma}+|\rho|+\frac12-k- j}, \quad r\sigma(k+ j-|\rho|-\frac12)+n>0 \\
\ln^{\frac{1}{r}} {\left ( \frac{e+t}{e+s}\right )}, \quad r\sigma(k+ j-|\rho|-\frac12)+n=0 \\
(1+t)^{-\frac{n}{r\sigma}+|\rho|+\frac12-k- j}, \quad
r\sigma(k+j-|\rho|-\frac12)+n<0
\end{cases}\,,
      \end{align*}
thanks to
     \begin{align*} \|  \chi_2(t, s,\xi)|\xi|^{\sigma(k+j-|\rho|-\frac12)} \|^r_{L^r}&=
     \int_{(1+t)^{-\frac1\sigma} \leq |\xi|\leq(1+s)^{-\frac1\sigma}} |\xi|^{r\sigma(k+j-|\rho|-\frac12)}\ d\xi\\
    &\lesssim \begin{cases}
(1+s)^{-\frac{n}{\sigma}+r(|\rho|+\frac12-k - j)}, \quad r\sigma(k+j-|\rho|-\frac12)+n>0 \\
\ln {\left ( \frac{e+t}{e+s}\right )}, \quad r\sigma(k+j-|\rho|-\frac12)+n=0 \\
(1+t)^{-\frac{n}{\sigma}+r(|\rho|+\frac12-k- j)}, \quad r\sigma(k +
j-|\rho|-\frac12)+n<0.
\end{cases}
      \end{align*}
In particular, if $\mu> \max\left\{1, 2(k + j)+
\frac{2n}{r\sigma}\right\}$ we conclude
\[\|    \mathfrak{F}^{-1}(\chi_2(t, s,\xi)|\xi|^{j\sigma}\partial_t^k \psi(t,s,\xi))\ast u_1\|_{L^q}
\lesssim\, (1+s)(1+t)^{-\frac{n}{r\sigma}-k-j}\|u_1\|_{L^p}.
\]
If $\mu = 1$ and $j+k=1$ we may estimate
\[|\xi|^{\sigma(k+j-\frac12)} \ln\left(\frac{1}{(1+s)|\xi|^\sigma}\right)\lesssim (1+s)^{-\frac12} \]
and  obtain
\[
\|    \mathfrak{F}^{-1}(\chi_2(t, s,\xi)|\xi|^{j\sigma}\partial_t^k
    \psi(t,s,\xi))\ast u_1\|_{L^q}
    \lesssim  (1+s)^{-\frac{n}{r\sigma}+\frac12} (1+t)^{-\frac12}\|u_1 \|_{L^p}
\]
whereas if $\mu = 1$ and $j=k=0$ we may estimate
\[|\xi|^{\sigma(k+j-\frac12)} \ln\left(\frac{1}{(1+s)|\xi|^\sigma}\right) \lesssim |\xi|^{-\frac{\sigma}{2}}\ln\Big(\frac{e+t}{e+s}\Big) \]
and  obtain
\begin{align*}
    \|    \mathfrak{F}^{-1}(\chi_2(t, s,\xi)
    \psi(t,s,\xi))\ast u_1\|_{L^q}
    & \lesssim \|   \chi_2(t, s,\xi)   \psi(t,s,\xi)\hat u_1\|_{L^{q'}}\\
    &\lesssim \|  \chi_2(t, s,\xi) \psi(t,s,\xi)\|_{L^r}\|\hat u_1\|_{L^{p'}} \\
    & \lesssim\, \|u_1\|_{L^p} (1+s)(1+t)^{-\frac12} \ln\Big(\frac{e+t}{e+s}\Big)   \begin{cases}
    (1+s)^{-\frac{n}{r\sigma}+\frac12}, \quad 2n>r\sigma \\
    \ln^{\frac{1}{r}} {\left ( \frac{e+t}{e+s}\right )}, \quad 2n=r\sigma \\
    (1+t)^{-\frac{n}{r\sigma}+\frac12}, \quad
    2n<r\sigma
    \end{cases}\,,
    \end{align*}
    thanks to
    \begin{align*} \|  \chi_2(t, s,\xi)|\xi|^{-\frac \sigma 2} \|^r_{L^r}&=
    \int_{(1+t)^{-\frac1\sigma} \leq |\xi|\leq(1+s)^{-\frac1\sigma}}|\xi|^{-\frac{r\sigma}{2}}\ d\xi \lesssim \begin{cases}
    (1+s)^{-\frac{n}{\sigma}+\frac r2}, \quad 2n>r\sigma \\
    \ln {\left ( \frac{e+t}{e+s}\right )}, \quad 2n=r\sigma \\
    (1+t)^{-\frac{n}{\sigma}+\frac r2}, \quad 2n<r\sigma.
    \end{cases}
    \end{align*}
%%%%%%%%%%%%%%%%%%%%%%%%%%%

\subsection*{Considerations in $Z_{3}$:}

In this zone, since $H^{\pm}_{\rho}=J_{\rho} \pm i Y_{\rho}$ we use
the following representation for the multiplier:
\begin{equation}\label{Bessel-Inteiro}
m(t, s, \xi)=2i|\xi|^{(k+j)\sigma} \left|
                         \begin{array}{cc}
                           J_{\rho}\left( (1+s)|\xi|^{\sigma} \right) & J_{\rho-k}\left( (1+t)|\xi|^{\sigma} \right) \\
                           Y_{\rho}\left( (1+s)|\xi|^{\sigma} \right) &  Y_{\rho-k}\left( (1+t)|\xi|^{\sigma} \right) \\
                         \end{array}
                       \right|
\end{equation}
if $\rho, \rho-k\in \mathbf{Z}$, or
\begin{equation}\label{Bessel}
m(t, s, \xi)=2i\csc(\rho \pi)|\xi|^{(k+j)\sigma} \left|
                         \begin{array}{cc}
                           J_{-\rho}\left( (1+s)|\xi|^{\sigma} \right) & J_{-\rho+k}\left( (1+t)|\xi|^{\sigma} \right) \\
                           (-1)^{k}J_{\rho}\left( (1+s)|\xi|^{\sigma} \right) &  J_{\rho-k}\left( (1+t)|\xi|^{\sigma} \right) \\
                         \end{array}
                       \right|
\end{equation}
if $\rho, \rho-k\not\in \mathbf{Z}$, with $k=0,1$ and $J_{\rho},
Y_{\rho}$ denote the Bessel functions of the first and second kind,
respectively. We apply Lemma \ref{hankel} (see
Appendix) in the following estimates to both cases, which are
slightly different. In the case $\rho, \rho-k\not\in \mathbf{Z}$ we
obtain
$$
 |\chi_3(t, s,\xi) m(t, s, \xi)| \lesssim
(1+s)^{-\rho}(1+t)^{\rho-k}|\xi|^{j\sigma} +
(1+s)^{\rho}(1+t)^{-\rho+k}|\xi|^{(2k+j)\sigma},
$$
so that
\[|\chi_3(t, s,\xi)|\xi|^{j\sigma}
\partial_t^k\psi(t,s,\xi)|\lesssim
(1+s)^{1-2\rho}(1+t)^{2\rho-k}|\xi|^{j\sigma} +
(1+s)(1+t)^{k}|\xi|^{(2k+j)\sigma}.
\]
By using Haussdorff-Young inequality and H\"older inequality,
setting
\[ \frac1r = \frac1{q'}-\frac1{p'}=\frac1p-\frac1q, \]
for $1\leq p\leq 2\leq q\leq\infty$ one may estimate for $ k+j=0,1$,
\begin{align*}
\|   \mathfrak{F}^{-1}(\chi_3(t, s,\xi)|\xi|^{j\sigma}\partial_t^k
\psi(t,s,\xi))\ast u_1\|_{L^q}
    & \lesssim \| |\xi|^{j\sigma} \chi_3(t, s,\xi)\partial_t^k   \psi(t,s,\xi)\hat u_1\|_{L^{q'}}\\
     &\lesssim \|   \chi_3(t, s,\xi)|\xi|^{j\sigma}\partial_t^k   \psi(t,s,\xi)\|_{L^r} \|\hat
     u_1\|_{L^{p'}} \\
&\lesssim\,(1+t)^{-\frac{n}{\sigma}\left(\frac1p-\frac1q\right)+\rho+|\rho|-k-j}(1+s)^{1-\rho-|\rho|}
\|u_1\|_{L^p}\,
\end{align*}
thanks to
$$
 \|  \chi_3(t, s,\xi)|\xi|^{a\sigma} \|^r_{L^r}=
     \int_{|\xi|\leq(1+t)^{-\frac1\sigma}}\, |\xi|^{ra\sigma}
     d\xi \lesssim (1+t)^{-\frac{n}{\sigma}-ar},
$$
with $a\geq 0$.
In the case, $\rho, \rho-k\in \mathbf{Z}$, we
obtain
\begin{eqnarray*}
% \nonumber to remove numbering (before each equation)
|\chi_3(t, s,\xi) m(t, s, \xi)|   & \lesssim &
(1+s)^{-\rho}(1+t)^{\rho-k}|\xi|^{j\sigma} +
(1+s)^{\rho}(1+t)^{-\rho+k}|\xi|^{(2k+j)\sigma} \\
   &+ & (1+s)^{\rho}(1+t)^{-\rho+k}|\xi|^{(2k+j)\sigma}\ln{\left ( \frac{e+t}{e+s} \right )}
\end{eqnarray*}
if $\rho-k \geq 0$ or
\begin{eqnarray*}
% \nonumber to remove numbering (before each equation)
|\chi_3(t, s,\xi) m(t, s, \xi)|   & \lesssim &
(1+s)^{-\rho}(1+t)^{\rho-k}|\xi|^{j\sigma} +
(1+s)^{\rho}(1+t)^{-\rho+k}|\xi|^{(2k+j)\sigma} \\
   &+ & (1+s)^{-\rho}(1+t)^{\rho-k}|\xi|^{j\sigma}\ln{\left ( \frac{e+t}{e+s} \right )}
\end{eqnarray*}
if $\rho-k < 0$. In fact, we
use the relation $J_{\rho-k}((1+t)|\xi|^{\sigma}) =(-1)^{k-\rho}
J_{k-\rho}((1+t)|\xi|^{\sigma})$ if $\rho -k\geq 0$ and
$J_{\rho}((1+s)|\xi|^{\sigma}) =(-1)^{-\rho}
J_{-\rho}((1+s)|\xi|^{\sigma})$ if $\rho -k< 0$. By using
Haussdorff-Young inequality and H\"older inequality, setting
\[ \frac1r = \frac1{q'}-\frac1{p'}=\frac1p-\frac1q, \]
for $1\leq p\leq 2\leq q\leq\infty$ one may estimate for $ k+j=0,1$,
\begin{align*}
\|   \mathfrak{F}^{-1}(\chi_3(t, s,\xi)|\xi|^{j\sigma}\partial_t^k
\psi(t,s,\xi))\ast u_1\|_{L^q}
    & \lesssim \| |\xi|^{j\sigma} \chi_3(t, s,\xi)\partial_t^k   \psi(t,s,\xi)\hat u_1\|_{L^{q'}}\\
     &\lesssim \|   \chi_3(t, s,\xi)|\xi|^{j\sigma}\partial_t^k   \psi(t,s,\xi)\|_{L^r} \|\hat
     u_1\|_{L^{p'}} \\
&\lesssim\, \|u_1\|_{L^p}  \left \{ \begin{array}{ccc}
(1+t)^{\rho+|\rho|-k-j-\frac{n}{\sigma}\left(\frac1p-\frac1q\right)} (1+s)^{1-\rho-|\rho|}  & \mbox{if} &\rho \neq 0 \\
(1+s)(1+t)^{-k-j-\frac{n}{\sigma}\left(\frac1p-\frac1q\right)} \left(
1+\ln{\left ( \frac{e+t}{e+s} \right )} \right) & \mbox{if}
& \rho = 0.
 \end{array} \right.
\end{align*}
Hence, if $\mu \neq 1$, then we have the same
estimate for both cases.

In particular, if $\mu> 1$ we conclude
\[ \|   \mathfrak{F}^{-1}(\chi_3(t,
s,\xi)|\xi|^{j\sigma}\partial_t^k \psi(t,s,\xi))\ast u_1\|_{L^q}
\lesssim\,
(1+s)(1+t)^{-\frac{n}{\sigma}\left(\frac1p-\frac1q\right)-k-j}\|u_1\|_{L^p},
\]
and, if $\mu< 1$ we conclude
\[ \|   \mathfrak{F}^{-1}(\chi_3(t,
s,\xi)|\xi|^{j\sigma}\partial_t^k \psi(t,s,\xi))\ast u_1\|_{L^q}
\lesssim\,
(1+s)^{\mu}(1+t)^{1-\mu-\frac{n}{\sigma}\left(\frac1p-\frac1q\right)-k-j}\|u_1\|_{L^p}.
\]
%\end{itemize}
\subsection*{Considerations in $Z_{high}$:}

Thanks to Lemma \ref{hankel} (see
Appendix), we may decompose $m_0\doteq (1-\chi)m$ as the sum of two
multipliers
\[ e^{\pm i(t-s)|\xi|^\sigma}|\xi|^{(k+j)\sigma} a((1+s)|\xi|^\sigma) b((1+t)|\xi|^\sigma),\]
where $a,b$ are symbols of order $-\frac12$.

If one try to follow the analysis of the previous zones, in $Z_{high}$ it appears the additional restriction $\frac{n}{\sigma}\left(\frac1{p}-\frac1{q}\right)<1$ on the $L^p-L^q$ estimates, for $1\leq p\leq 2\leq q\leq\infty$. To relax this range, in this zone of the extended phase space we may employ the strategy used in \cite{EL} to study the damping-free
problem.
By using duality argument, it is enough to prove the estimates for $\frac1p+\frac1q\geq 1$.\\

Let $\phi\in C^{\infty}_c\left(\R^n\right)$ be a non-negative function supported  in $\{\xi: \frac{1}{2}\leq |\xi| \leq 2\}$ and $\phi_\ell(\xi)\doteq \phi(2^{-\ell}|\xi|)$,
with $\ell$  an integer satisfying
\[ \sum_{\ell\in \Z} \phi_\ell(\xi)=1, \qquad \forall \xi\neq 0.\]
In particular, $(1-\chi)\phi_\ell =0$ if $\ell<-1$ and
$(1-\chi)\phi_\ell =\phi_\ell$ if $\ell\geq 1$,
hence one may write
\[\phi_\ell(\xi)m_0(t,s, \xi)=\sum_{\ell=-1}^{\infty} \phi_\ell(\xi)m_0(t,s, \xi).\]
By using   Plancherel's theorem and  putting $\eta\doteq 2^{-\ell}\xi$ we have
\begin{equation}\label{11}
 \|\phi_\ell \cdot (1-\chi)m(t,s, \cdot)\|_{M_2^2}=\sup_{\eta\in supp \phi} |\phi(\eta)m_0(t, x, 2^\ell|\eta|)| \ \leq
 C2^{\ell(k+j-1)\sigma}(1+t)^{-\frac12}(1+s)^{-\frac12}.
\end{equation}
Now, by using  Littman's lemma (see Appendix) we
conclude
\begin{align*}
&\Big\|\mathfrak{F}^{-1}_{\xi\rightarrow x}\Big(e^{\pm i(t-s)|\xi|^{\sigma}}\phi_\ell(\xi) |\xi|^{(k+j)\sigma}a((1+s)|\xi|^\sigma) b((1+t)|\xi|^\sigma)\Big)\Big\|_{L^{\infty}}\\
&=2^{\ell( n + (k+j)\sigma)}\Big\|\mathfrak{F}^{-1}_{\eta\rightarrow x}\Big(e^{\pm i(t-s)2^{\ell\sigma}|\eta|^{\sigma}}\phi(\eta)|\eta|^{(k+j)\sigma}a((1+s)2^{\ell\sigma}|\eta|^\sigma) b((1+t)2^{\ell\sigma}|\eta|^\sigma)\Big\|_{L^{\infty}}  \\
&\leq C 2^{\ell(n+(k+j)\sigma}) (1+(t-s)2^{\ell\sigma})^{-\frac{n}{2}}\sum_{|\alpha|\leq L} \|D_{\eta}^\alpha \phi(\eta)|\eta|^{(k+j)\sigma}a((1+s)2^{\ell\sigma}|\eta|^\sigma) b((1+t)2^{\ell\sigma}|\eta|^\sigma) \|_{L^\infty}\\
&\leq C
2^{\ell(n+(k+j-1)\sigma)}(1+(t-s)2^{\ell\sigma})^{-\frac{n}{2}}(1+t)^{-\frac12}(1+s)^{-\frac12}.
\end{align*}
 We remark that for $\sigma\neq 1$ the rank of the Hessian $H_{|\eta|^\sigma}$ is equal to $n$.\\
 Hence,  Young's Inequality implies
\begin{align}\label{des11}
\Big\|\mathfrak{F}^{-1}_{\xi\rightarrow x}\Big(
m_{0}(t,s,\cdot)\phi_\ell(\xi)\mathfrak{F}(f)\Big)\Big\|_{L^{\infty}(\R^n)}\leq
C 2^{\ell(n+(k+j-1)\sigma-\frac{n}{2}\sigma)}
(t-s)^{-\frac{n}{2}}(1+t)^{-\frac12}(1+s)^{-\frac12}\|f\|_{L^1},
\end{align}
for all integer $\ell$, or equivalent,
\begin{equation}\label{1}
\|\phi_\ell \cdot m_{0}(t,s,\cdot)\|_{M_1^{\infty}}\leq
C2^{\ell(n+(k+j-1)\sigma-\frac{n}{2}\sigma)}
(t-s)^{-\frac{n}{2}}(1+t)^{-\frac12}(1+s)^{-\frac12}.
\end{equation}
%\textcolor{red}{Here, as in Wirth, we can have not singular estimate by putting away the term $2^{-\frac{kn}{2}\sigma}$, but for now I prefer to enlarge the zone where we have $L^p-L^q$ estimates. Moreover, singular estimates can be used for the semilinear problem!}
As a consequence of \eqref{11},  \eqref{1} and the Riesz-Thorin interpolation theorem
we get
\begin{equation}\label{2}
\|\phi_\ell \cdot  m_{0}(t,s,\cdot)\|_{M_{p_0}^{q_0}}\leq
C2^{\ell\left((k+j-1)\sigma+n\left(\frac1{p_0}-\frac1{q_0}\right)\left(1-\frac{\sigma}{2}\right)\right)}
(t-s)^{-\frac{n}{2}\left(\frac1{p_0}-\frac1{q_0}\right)}(1+t)^{-\frac12}(1+s)^{-\frac12}
\end{equation}
for $\frac{1}{p_0}+\frac{1}{q_0}=1$.

In order to derive an estimate for $\|\phi_\ell \cdot m_{0}\|_{M_1^1}$,  one may prove the following estimates
\begin{eqnarray*} \|\partial_\xi^\gamma (\phi_\ell  m_{0}(t,s,\cdot))\|_{L^2}&\leq & C (t-s)^{|\gamma|}
(1+t)^{-\frac12}(1+s)^{-\frac12}\Big(\int_{2^{\ell-1}\leq|\xi|\leq 2^{\ell+1}} |\xi|^{2(k+j)\sigma-2\sigma+2(\sigma-1)|\gamma|}\,d\xi \Big)^{\frac12}\\
& \leq &
C_1\,2^{\ell(\frac{n}{2}+(k+j-1)\sigma+|\gamma|(\sigma-1))}(t-s)^{|\gamma|}(1+t)^{-\frac12}(1+s)^{-\frac12}
\end{eqnarray*}
and applying  the Berstein's inequality (see
Proposition \ref{sigrid1} in Appendix) for $N>\frac{n}{2}$ we get
\begin{eqnarray}\label{4}
\|\phi_\ell  m_{0}(t,s,\cdot)\|_{M_1^1}&\leq& \|\phi_\ell m_{0}(t,s,\cdot)\|_{L^2}^{1-\frac{n}{2N}}\|D^N \left(\phi_\ell m_{0}(t,s,\cdot) \right)\|_{L^2}^{\frac{n}{2N}}\nonumber\\
 &\leq& C 2^{\ell\sigma(\frac{n}{2}+ k+j-1)}(t-s)^{\frac{n}{2}}(1+t)^{-\frac12}(1+s)^{-\frac12}.
\end{eqnarray}
Using \eqref{2},  \eqref{4} and  Riesz-Thorin interpolation theorem we conclude that
\begin{align*}
 \|\phi_\ell m_{0}(t,s,\cdot)\|_{M_{p}^{q}}\leq  C 2^{\ell n\left(\frac{1}{p}+\frac{\sigma-1}{q}-\sigma\big(\frac{1}{2}-\frac{k+j-1}{n}\big)\right)}
 (t-s)^{\frac{n}{2}\left(\theta-\left(\frac{1}{p}-\frac{1}{q}\right)\right) }(1+t)^{-\frac12}(1+s)^{-\frac12},
\end{align*}
where $0< \theta < 1$, with $\frac{1}{p}=\frac{1-\theta}{p_0}+\theta$ and $\frac{1}{q}=\frac{1-\theta}{q_{0}}+\theta$.

Therefore, for large frequencies, using the
Littlewood-Paley dyadic decomposition we conclude
the estimate
\begin{equation}\label{eq:partition}
  \|m_{0}(t,s,\cdot)\|_{M_{p}^{q}} \leq  \sum_{\ell \geq -1}\|\phi_\ell m_{0}(t,s,\cdot)\|_{M_{p}^{q}}\leq C (t-s)^{\frac{n}{2}\left(-1+\frac{2}{q}\right) }(1+t)^{-\frac12}(1+s)^{-\frac12},
\end{equation}
which is convergent if
\begin{equation}\label{sharp}
\frac{1}{p}+\frac{\sigma-1}{q}<\sigma\big(\frac{1}{2}-\frac{k+j-1}{n}\big),
\qquad \mbox{for} \ \frac1p+\frac1q\geq 1.
\end{equation}
 By duality arguments,  the analogous estimate is true if
%\begin{equation}\label{sharp1}
\[
\frac{1-\sigma}{p}-\frac{1}{q}<\sigma\big(-\frac{k+j-1}{n}-\frac{1}{2}\big),
\qquad \mbox{for} \ \frac1p+\frac1q\leq 1.
\]
%\end{equation}
However, in the special case~$1<p\leq 2\leq q<\infty$, the latter
estimates may be refined by using the embeddings for Besov spaces
(see, for instance, \cite{ST}): $L^p\hookrightarrow B^0_{p,2}$
for~$p\in(1,2]$ and $B^0_{q,2}\hookrightarrow L^q$
for~$q\in[2,\infty)$. Indeed, since the sum in~\eqref{eq:partition}
is finite for any given~$\xi$ (see Appendix), in
particular, $\#\{\ell: \ \phi_{\ell}(\xi)\neq0\}\leq 3$, we obtain
the chain of inequalities (see
also~\cite{Brenner1})
\[ \|\mathfrak{F}^{-1}(m_0\hat f)\|_{B^0_{q,2}}\leq C_1\sup_{\ell} \|\mathfrak{F}^{-1}(m_0\phi_{\ell}\hat f)\|_{L^q} \leq C_2\,\|f\|_{L^p}\leq C_3 \,\|f\|_{B^0_{p,2}}. \]

Summing up we have:
\begin{proposition} Let $n\in \N$ and $\sigma\neq 1$.
Assume  $1\leq p \leq q\leq \infty$ and $j+k\leq 1$ such that
\begin{equation}\label{eq:rangeboth}
\frac{n}\sigma\left(\frac1p-\frac1q\right) + n\max\left\{\left(\frac12-\frac1p\right),\left(\frac1q-\frac12\right)\right\} +j+k<1.
\end{equation}
Then  $u^{high}(t,s,\cdot)\doteq \mathfrak{F}^{-1}((1-\chi(|\xi|))
\psi(t,s,\xi))\ast u_1(x)  $ satisfies
\begin{equation}\label{linearestimates}
\|\partial_t^k(-
\Delta)^{\frac{j\sigma}{2}}u^{high}(t,s,\cdot)\|_{L^q} \leq C
(t-s)^{n\Gamma(p,q)}(1+t)^{-\frac{\mu}{2}}(1+s)^{\frac{\mu}{2}}\|u_1\|_{L^p},
\end{equation}
with
\[
\Gamma(p,q)=
\begin{cases}
\frac{1}{q}-\frac{1}{2}, \quad \frac1p+\frac1q\geq 1\\
\frac{1}{2}-\frac{1}{p}, \quad \frac1p+\frac1q\leq 1.
\end{cases}
\]
Moreover,  if equality holds in \eqref{eq:rangeboth},  estimate
\eqref{linearestimates} remains valid for  $1<p\leq 2\leq q<\infty$.
\end{proposition}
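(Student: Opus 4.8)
The plan is to reduce the asserted $L^p$--$L^q$ estimate for $u^{high}$ to a bound on the Fourier multiplier norm $\|m_0(t,s,\cdot)\|_{M_p^q}$ of $m_0=(1-\chi)m$, and to obtain the latter by summing the contributions of a Littlewood--Paley dyadic decomposition $\sum_{\ell\geq-1}\phi_\ell$. First I would record, via Lemma \ref{hankel}, that on $Z_{high}$ the multiplier splits into the two oscillatory pieces $e^{\pm i(t-s)|\xi|^\sigma}|\xi|^{(k+j)\sigma}a((1+s)|\xi|^\sigma)b((1+t)|\xi|^\sigma)$ with $a,b$ symbols of order $-\tfrac12$. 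Since $i|\xi|^{j\sigma}\partial_t^k\psi$ equals $m$ times the prefactor $\tfrac\pi4(1+t)^{\rho}(1+s)^{1-\rho}$ with $\rho=\tfrac{1-\mu}{2}$, and every bound for $m_0$ below carries the decay $(1+t)^{-1/2}(1+s)^{-1/2}$ produced by these two symbols, restoring the prefactor turns $(1+t)^{-1/2}(1+s)^{-1/2}$ into $(1+t)^{-\mu/2}(1+s)^{\mu/2}$; hence it suffices to track the $|\xi|$- and $(t-s)$-dependence of $\|m_0\|_{M_p^q}$. By the duality relating $M_p^q$ to $M_{q'}^{p'}$ I would treat only the range $\tfrac1p+\tfrac1q\geq1$.

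On a single dyadic block $\phi_\ell m_0$ I would establish three endpoint bounds. Plancherel's theorem gives the $M_2^2$ estimate \eqref{11}, with gain $2^{\ell(k+j-1)\sigma}$. The $M_1^\infty$ estimate \eqref{1} comes from Littman's lemma after the rescaling $\eta=2^{-\ell}\xi$; the decisive point is that for $\sigma\neq1$ the Hessian of the phase $|\eta|^\sigma$ has full rank $n$ on $\supp\phi$, which yields the stationary-phase decay $(1+(t-s)2^{\ell\sigma})^{-n/2}$ and, after Young's inequality, the factor $(t-s)^{-n/2}$ with the correct power of $2^\ell$. Finally the $M_1^1$ bound \eqref{4} follows from Bernstein's inequality (Proposition \ref{sigrid1}) combined with the $L^2$ bounds for $\partial_\xi^\gamma(\phi_\ell m_0)$. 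Interpolating \eqref{11} with \eqref{1} by Riesz--Thorin yields \eqref{2} on the diagonal $\tfrac1{p_0}+\tfrac1{q_0}=1$, and a further interpolation with \eqref{4} produces the block estimate in $M_p^q$, in which the exponent of $2^\ell$ is $n\big(\tfrac1p+\tfrac{\sigma-1}{q}-\sigma(\tfrac12-\tfrac{k+j-1}{n})\big)$ and the time exponent is $\tfrac n2(-1+\tfrac2q)=n\Gamma(p,q)$.

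It then remains to sum over $\ell\geq-1$ as in \eqref{eq:partition}: this is a geometric series in $2^\ell$ that converges precisely when the exponent of $2^\ell$ is negative, i.e. when \eqref{sharp} holds, and a direct computation shows that \eqref{sharp} is equivalent, in the range $\tfrac1p+\tfrac1q\geq1$ where $\max\{\tfrac12-\tfrac1p,\tfrac1q-\tfrac12\}=\tfrac1q-\tfrac12$, to the hypothesis \eqref{eq:rangeboth}; summing gives the strict case of \eqref{linearestimates}. The hard part will be the endpoint where equality holds in \eqref{eq:rangeboth}: there the exponent of $2^\ell$ vanishes, the series no longer converges termwise, and naive summation fails. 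To recover the estimate for $1<p\leq2\leq q<\infty$ I would instead invoke the Besov embeddings $L^p\hookrightarrow B^0_{p,2}$ and $B^0_{q,2}\hookrightarrow L^q$; since at each frequency at most three of the $\phi_\ell$ overlap, the uniform block bound controls the $B^0_{q,2}$ norm through $\sup_\ell\|\mathfrak{F}^{-1}(m_0\phi_\ell\hat f)\|_{L^q}$, closing the argument. The complementary range $\tfrac1p+\tfrac1q\leq1$, including its endpoint, then follows by the duality already invoked.
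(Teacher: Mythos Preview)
Your proposal is correct and follows essentially the same route as the paper: dyadic decomposition of $m_0$ on $Z_{high}$, the three endpoint bounds \eqref{11}, \eqref{1}, \eqref{4} obtained respectively from Plancherel, Littman's lemma, and Bernstein's inequality, Riesz--Thorin interpolation to reach general $(p,q)$, summation in $\ell$ under condition \eqref{sharp} (which, as you observe, is exactly \eqref{eq:rangeboth} in the range $\tfrac1p+\tfrac1q\geq1$), duality for the complementary range, and the Besov-space refinement $L^p\hookrightarrow B^0_{p,2}$, $B^0_{q,2}\hookrightarrow L^q$ with the finite-overlap property to handle the borderline case. Your accounting of the time prefactor $(1+t)^{\rho}(1+s)^{1-\rho}$ combined with the $(1+t)^{-1/2}(1+s)^{-1/2}$ decay from the symbols is also the same as in the paper.
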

%\begin{remark} If we put $p=1$ in \eqref{sharp},  \eqref{linearestimates} is true for all $ \tilde{q}< q \leq\infty$($q\geq \tilde{q}\geq2$), with $\tilde{q}$ given by
%\begin{equation}\label{fixed}
%\frac{1}{\tilde{q}}\doteq\frac{1}{\sigma-1} \left( \sigma\left( \frac1{n} + \frac1{2}\right) -1 \right).
%\end{equation}
%For $2\sigma=n$ we get $\tilde{q}=2$, whereas $\tilde{q}<2$ for $2\sigma>n$ and $\tilde{q}>2$ for $2\sigma<n$.
%\end{remark}
\begin{remark}
If $\frac1p+\frac1q\geq 1$ with $1\leq p \leq 2$, \eqref{sharp} is
true  for all $ \tilde{q}< q \leq\infty$ ($q\geq \tilde{q}\geq2$),
with $\tilde{q}$ given by
\begin{equation}\label{fixed}
\frac{1}{\tilde{q}}\doteq\frac{1}{\sigma-1} \left( \sigma\left(
\frac{1-j-k}{n} + \frac1{2}\right) - \frac{1}{p} \right).
\end{equation}
In particular, if  $j=k=0$, then for $2\sigma=\left ( \frac{2}{p} -
1\right )n$ we get $\tilde{q}=2$, whereas $\tilde{q}<2$ for
$2\sigma> \left ( \frac{2}{p} - 1\right )n$ and $\tilde{q}>2$ for
$2\sigma< \left ( \frac{2}{p} - 1\right )n$. If $j+k=1$, then
$p=2=\tilde{q}$ and  $1\leq p < 2<\tilde{q}$.
\end{remark}

\begin{remark} For $p=1$ the term   $(t-s)^{n\Gamma(p,q)}$ may be singular but $n\Gamma(p,q)>-1$ for  $q<\frac{2n}{[n-2]_+}$.
\end{remark}
\begin{remark}
For $n <2\sigma(1-k-j)$ we may also have $L^1-L^2$ estimate for
$u^{high}$. Indeed,
\begin{eqnarray*}
\|\partial_t^k(-
\Delta)^{\frac{j\sigma}{2}} u^{high}(t,s,\cdot)\|_{L^2}&\lesssim& \| (1-\chi(|\xi|))|\xi|^{j\sigma}\partial_t^k \psi(t,s,\xi)\|_{L^2}\|\hat u_1\|_{L^{\infty}} \\
         & \lesssim& \||\xi|^{(k+j-1)\sigma}\|_{L^2(|\xi|\geq 1)}(1+s)^{\frac{\mu}2}(1+t)^{-\frac{\mu}2}\|u_1\|_{L^1}\\& \lesssim & (1+s)^{\frac{\mu}2}(1+t)^{-\frac{\mu}2}\|u_1\|_{L^1}.
\end{eqnarray*}
\end{remark}

\begin{corollary}\label{corHigh}
Let $u_1\in L^2$, then  $u^{high}(t,s,\cdot)\doteq \mathfrak{F}^{-1}((1-\chi(|\xi|)) \psi(t,s,\xi))\ast u_1(x)  $ satisfies
\begin{equation}\label{linearestimatesb}
\|u^{high}(t,s,\cdot)\|_{L^q} \leq C
(1+t)^{-\frac{\mu}{2}}(1+s)^{\frac{\mu}{2}}\|u_1\|_{L^2}, \quad
2\leq q\leq \frac{2n}{[n-2\sigma]_+}.
\end{equation}
and for $j+k=1$
\begin{equation}\label{linearestimatesderivatives}
\|\partial_t^k(- \Delta)^{\frac{j\sigma}{2}}u^{high}(t,s,\cdot)\|_{L^2} \leq C (1+t)^{-\frac{\mu}{2}}(1+s)^{\frac{\mu}{2}}\|u_1\|_{L^2}.
\end{equation}
\end{corollary}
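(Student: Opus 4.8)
Corollary 2.3 extracts from the Proposition (the $L^p$–$L^q$ high-frequency estimate \eqref{linearestimates}) the two special cases that will actually feed into the nonlinear analysis: an $L^2$–$L^q$ estimate without the singular factor $(t-s)^{n\Gamma(p,q)}$, and the $L^2$–$L^2$ estimate for the energy quantities ($j+k=1$). The point is that taking $p=2$ kills the troublesome time-singularity, because then $\Gamma(2,q)=\tfrac1q-\tfrac12\le 0$ only sets the exponent and—more importantly—we want to check the singular power actually becomes harmless or can be absorbed.

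**The plan.** I would specialize the Proposition by setting $p=2$. With $p=2$ one has $\frac1p+\frac1q=\frac12+\frac1q\ge 1$ for every $q\in[2,\infty]$, so we are on the branch $\Gamma(2,q)=\frac1q-\frac12$ and the condition \eqref{eq:rangeboth} reads
\[
\frac{n}{\sigma}\Big(\frac12-\frac1q\Big)+n\Big(\frac1q-\frac12\Big)+j+k<1,
\qquad\text{i.e.}\qquad
n\Big(\frac12-\frac1q\Big)\Big(1-\frac1\sigma\Big)<1-(j+k).
\]
For the first estimate \eqref{linearestimatesb} we have $j=k=0$, so the right-hand side is $1$ and, since $\sigma>1$ makes $1-\frac1\sigma>0$, solving for $q$ gives exactly the admissible range $2\le q\le \frac{2n}{[n-2\sigma]_+}$ claimed (with the endpoint $q=\frac{2n}{n-2\sigma}$ covered by the equality clause of the Proposition, valid for $1<p\le 2\le q<\infty$). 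I would then observe that \eqref{linearestimates} delivers the factor $(t-s)^{n\Gamma(2,q)}=(t-s)^{n(\frac1q-\frac12)}$ with a \emph{nonpositive} exponent, hence $(t-s)^{n\Gamma(2,q)}\le 1$ for $t-s\ge\text{const}$; combined with the standard small-time bound (the multiplier itself is uniformly bounded, as in the $Z_{high}$ computation giving $(1+t)^{-\mu/2}(1+s)^{\mu/2}$), this removes the $(t-s)$ factor entirely and yields \eqref{linearestimatesb}.

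**The energy estimate.** For \eqref{linearestimatesderivatives} I take $j+k=1$ and again $p=q=2$. Then $\Gamma(2,2)=0$, so the singular factor disappears outright, and condition \eqref{eq:rangeboth} becomes $0+j+k=1<1$, which fails strictly—so I must invoke the equality clause of the Proposition, which asserts that \eqref{linearestimates} remains valid when equality holds in \eqref{eq:rangeboth} for $1<p\le2\le q<\infty$. Here $p=q=2$ sits inside that range, and equality holds, so the estimate applies and gives directly
\[
\|\partial_t^k(-\Delta)^{\frac{j\sigma}{2}}u^{high}(t,s,\cdot)\|_{L^2}\le C\,(1+t)^{-\frac\mu2}(1+s)^{\frac\mu2}\|u_1\|_{L^2}
\]
for $j+k=1$.

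**Main obstacle.** The only genuinely delicate point is the endpoint behavior and the removal of the $(t-s)^{n\Gamma}$ factor. For the $L^2$–$L^q$ estimate the exponent $n(\frac1q-\frac12)$ is nonpositive but can be negative, so one must argue separately for $t-s$ small (where boundedness of the multiplier suffices) and $t-s$ large (where the negative power is $\le1$); this splitting is routine but is where the claim could be mis-stated if the sign of $\Gamma$ were overlooked. For the energy estimate the subtlety is purely bookkeeping: one must be careful to land on the \emph{equality} branch of the Proposition rather than the strict-inequality hypothesis, since $j+k=1$ forces equality in \eqref{eq:rangeboth} at $p=q=2$. Neither step requires new analysis—the corollary is a clean specialization of the Proposition—so I expect the proof to be short, essentially a verification of the parameter ranges and an elementary treatment of the time-singularity.
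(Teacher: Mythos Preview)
Your overall strategy---specialize the Proposition to $p=2$---is exactly what the paper intends, and the treatment of the energy estimate ($j+k=1$, $p=q=2$) via the equality clause is fine. However, your handling of \eqref{linearestimatesb} contains two slips that, followed honestly, do not give the claimed conclusion.

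First, for $p=2$ and $q\ge2$ one has $\tfrac1p+\tfrac1q=\tfrac12+\tfrac1q\le1$, not $\ge1$. You are on the \emph{second} branch of $\Gamma$, so $\Gamma(2,q)=\tfrac12-\tfrac1p=0$ and $(t-s)^{n\Gamma(2,q)}\equiv1$. There is nothing singular to remove; the small-time/large-time splitting you propose is unnecessary (and would in any case require a separate argument near $t=s$ that the Proposition does not supply).

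Second, in \eqref{eq:rangeboth} with $p=2$ and $q\ge2$, the two arguments of the maximum are $\tfrac12-\tfrac1p=0$ and $\tfrac1q-\tfrac12\le0$, so the maximum equals $0$, not $\tfrac1q-\tfrac12$. The condition therefore reduces to
\[
\frac{n}{\sigma}\Big(\tfrac12-\tfrac1q\Big)+j+k<1,
\]
which for $j=k=0$ reads $\tfrac12-\tfrac1q<\tfrac{\sigma}{n}$, i.e.\ $q<\tfrac{2n}{[n-2\sigma]_+}$, with the endpoint covered by the equality clause. Your version, carrying the spurious term $n(\tfrac1q-\tfrac12)$, actually simplifies (watch the sign) to $-n(\tfrac12-\tfrac1q)(1-\tfrac1\sigma)<1$, which is vacuously true for every $q\ge2$ and does \emph{not} single out the range $q\le\tfrac{2n}{[n-2\sigma]_+}$. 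Once these two points are corrected the corollary is immediate.
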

%%%%%%%%%%%%%%%%%%%%%%%%%%%%%%%%%%%%%%%%%%%%%%%%%%%%%
\begin{remark}
It is worth to mention that different from $Z_{high}$ and $Z_1$, in zones $Z_2$ and $Z_3$ additional derivatives produce additional decay.
\end{remark}

In the following, we state the  estimates for solutions to the linear problem for $s=0$ and $s\neq 0$ that will be used in Section \ref{NL}.

\begin{theorem}\label{theoremlinear}
Let $\sigma > 1$, $1 \leq n < 2 \sigma$ and $q\geq 2$.
\begin{description}
 \item[(i)] If $\mu> \max \{ \frac{2n}{\sigma} \left ( 1-\frac{1}{q} \right
  ); 1 \}$,  then the solution to \eqref{mainlineq} satisfies
  \[\|   u(t,\cdot)\|_{L^q}
\lesssim\,(1+t)^{-\frac{n}{\sigma}\left(1-\frac1{q}\right)}(1+s)
\left(  \|u_1\|_{L^1} + (1+s)^{ \frac{n}{2\sigma}}
\|u_1\|_{L^2}\right)
\]
or
\[\|   u(t,\cdot)\|_{L^q}
\lesssim\,(1+t)^{-\frac{n}{\sigma}\left(1-\frac1{q}\right)} \left( (1+s)^{\max\left\{1 , \frac{n}{\sigma}\left(1-\frac1{q}\right)\right\}} \|u_1\|_{L^1} + (1+s)^{ \frac{n}{\sigma}\left(1-\frac1{q}\right)} \|u_1\|_{L^2}\right), \quad \frac{n}{\sigma}\left(1-\frac1{q}\right)\neq 1;
\]
\item[(ii)]
If $\max \{  2-\frac{2n}{\sigma} \left ( 1-\frac{1}{q} \right
  ) ; 0  \} < \mu < 1$ or $1 < \mu < \frac{2n}{\sigma} \left ( 1-\frac{1}{q} \right ) $, then
the solution to \eqref{mainlineq} satisfies
\[\| u(t,\cdot)\|_{L^q}
\lesssim \, (1+t)^{-\frac{\mu}{2}} (1+s)^{
1+\frac{\mu}{2}-\frac{n}{\sigma}\left(1-\frac1{q}\right) } \left(
\|u_1\|_{L^1} +(1+s)^{\frac{n}{2\sigma}}\|u_1\|_{L^2} \right)
\]
or
\[\| u(t,\cdot)\|_{L^q}
\lesssim \, (1+t)^{-\frac{\mu}{2}} (1+s)^{
\frac{\mu}{2}} \left(
(1+s)^{\max\left\{0 , 1-\frac{n}{\sigma}\left(1-\frac1{q}\right)\right\}}\|u_1\|_{L^1} +\|u_1\|_{L^2} \right), \quad \frac{n}{\sigma}\left(1-\frac1{q}\right)\neq 1.
\]
Moreover, if $\mu=\frac{2n}{\sigma} \left (1-\frac{1}{q} \right )
>1$, then
 \[\| u(t,\cdot)\|_{L^q}
\lesssim \,
(1+t)^{-\frac{n}{\sigma}\left(1-\frac1{q}\right)}(1+s)\left(
\left(\ln {\left ( \frac{e+t}{e+s}\right )}\right)^{1-\frac{1}{q}}
 \|u_1\|_{L^1}
+(1+s)^{\frac{n}{2\sigma}} \|u_1\|_{L^2} \right)
\]
whereas if $ \mu = 2-\frac{2n}{\sigma} \left ( 1-\frac{1}{q} \right
  ) <1$, then
 \[\| u(t,\cdot)\|_{L^q}
\lesssim \,
(1+t)^{\frac{n}{\sigma}\left(1-\frac1{q}\right)-1}(1+s)^{\mu}\left(
\left(\ln {\left ( \frac{e+t}{e+s}\right )}\right)^{1-\frac{1}{q}}
 \|u_1\|_{L^1}
+(1+s)^{\frac{n}{2\sigma}} \|u_1\|_{L^2} \right);
\]

\item[(iii)]
 If $ 0 < \mu < \min \{ 2-\frac{2n}{\sigma} \left
( 1-\frac{1}{q} \right
  ) ; 1 \}, $
then the solution to \eqref{mainlineq} satisfies
\[\| u(t,\cdot)\|_{L^q}
\lesssim \,
(1+t)^{1-\mu-\frac{n}{\sigma}\left(1-\frac1{q}\right)}(1+s)^{\mu}
\left(  \|u_1\|_{L^1} +(1+s)^{\frac{n}{2\sigma}}\|u_1\|_{L^2}
\right)
\]
or
\[\| u(t,\cdot)\|_{L^q}
\lesssim \, (1+t)^{1-\mu-\frac{n}{\sigma}\left(1-\frac1{q}\right)}
\left( (1+s)^{\max\left\{\mu ,
\mu-1+\frac{n}{\sigma}\left(1-\frac1{q}\right)
\right\}}\|u_1\|_{L^1} +
(1+s)^{\mu-1+\frac{n}{\sigma}\left(1-\frac1{q}\right)}\|u_1\|_{L^2}
\right), \quad \frac{n}{\sigma}\left(1-\frac1{q}\right)\neq 1.
\]

\item[(iv)]  If $\mu=1,$  then the solution to
\eqref{mainlineq} satisfies
\begin{eqnarray*}
% \nonumber to remove numbering (before each equation)
 \| u(t, \cdot)\|_{L^q}  & \lesssim & (1+t)^{-\frac{1}{2}}(1+s)^{\frac{3}{2}-\frac{n}{\sigma}\left(
\frac{1}{2}-\frac1q\right)} \|u_1\|_{L^2} \\
   &+ & \|u_1\|_{L^1} (1+t)^{-\min \{ \frac{n}{\sigma}\left(
1-\frac1q\right); \frac{1}{2} \} }
\begin{cases}
(1+s)^{\frac{3}{2}-\frac{n}{\sigma}\left(
1-\frac1q\right)}\ln{\left( \frac{e+t}{e+s} \right)} , \quad  q > \frac{2n}{[2n-\sigma]_+}
\\
(1+s)\ln^{2-\frac{1}{q}}{\left( \frac{e+t}{e+s} \right)} , \quad q =  \frac{2n}{[2n-\sigma]_+}
\\
(1+s)\ln{\left( \frac{e+t}{e+s} \right)} , \quad 1 \leq  q <  \frac{2n}{[2n-\sigma]_+} .
\end{cases}
\end{eqnarray*}

\end{description}
\end{theorem}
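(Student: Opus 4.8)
The plan is to estimate $u(t,\cdot)$ (the case $j=k=0$) by summing the contributions of the four regions used above. Writing $\hat u=\psi\hat u_1$ and $\psi=(1-\chi)\psi+\chi_1\psi+\chi_2\psi+\chi_3\psi$, I split $u=u^{high}+u^{Z_1}+u^{Z_2}+u^{Z_3}$; the stated estimates for the initial-data problem correspond to $s=0$, where every $(1+s)$-factor equals $1$, so it suffices to treat the general $s\ge 0$ case. For $u^{high}$ I would invoke Corollary \ref{corHigh}: since $1\le n<2\sigma$ forces $[n-2\sigma]_+=0$, the admissible range in \eqref{linearestimatesb} is the whole of $2\le q\le\infty$, whence $\|u^{high}(t,s,\cdot)\|_{L^q}\lesssim(1+t)^{-\mu/2}(1+s)^{\mu/2}\|u_1\|_{L^2}$ for every $q$ in the statement. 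This is the region where $\|u_1\|_{L^2}$ is indispensable, since at frequencies $|\xi|\gtrsim1$ no $L^1$-based decay is available; in the low-frequency zones one has the freedom to use either norm, and the optimal choice is part of the bookkeeping below.

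For the three low-frequency pieces I would feed $j=k=0$ into the bounds obtained in the \emph{Considerations in $Z_1$}, \emph{$Z_2$} and \emph{$Z_3$} paragraphs, used once with $p=1$ (hence $r=q'$, giving the $\|u_1\|_{L^1}$ terms) and once with $p=2$ (hence $\tfrac1r=\tfrac12-\tfrac1q$, giving the $\|u_1\|_{L^2}$ terms), keeping in mind that $\rho=(1-\mu)/2$. With $p=1$ the $Z_3$ output reads $(1+t)^{-\frac n\sigma(1-1/q)}(1+s)$ for $\mu>1$ and $(1+t)^{1-\mu-\frac n\sigma(1-1/q)}(1+s)^{\mu}$ for $\mu<1$, which are precisely the leading $t$-rates in (i) and (iii); the $Z_1$ output is always of size $(1+t)^{-\mu/2}(1+s)^{\mu/2}$, the same scale as $u^{high}$; and $Z_2$ interpolates between the two, its decisive exponent being dictated by the sign of $n-r\sigma(|\rho|+\tfrac12)$.

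The heart of the matter is then to decide, for each admissible pair $(\mu,q)$, which of these scales decays slowest in $t$, and to redistribute the surplus $t$-decay onto the variable $s$ via the monotonicity $0\le s\le t$. Concretely, whenever a piece decays faster than the claimed rate $(1+t)^{-\alpha}$ one writes $(1+t)^{-\beta}=(1+t)^{-\alpha}(1+t)^{-(\beta-\alpha)}$ and either discards the last factor or replaces $(1+t)^{-(\beta-\alpha)}$ by $(1+s)^{-(\beta-\alpha)}$; these two choices are exactly what produce the two equivalent forms displayed in each of (i)--(iii), while the side restriction $\frac n\sigma(1-1/q)\neq1$ merely excludes the value of $q$ at which a $Z_2$ radial integral turns logarithmic. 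The thresholds separating the cases are read off directly: $\mu=\frac{2n}\sigma(1-1/q)$ is where the $Z_1/u^{high}$ rate $(1+t)^{-\mu/2}$ meets the $Z_3$ rate $(1+t)^{-\frac n\sigma(1-1/q)}$ (governing the passage between (i) and the upper part of (ii)), while $\mu=2-\frac{2n}\sigma(1-1/q)$ is where $(1+t)^{-\mu/2}$ meets $(1+t)^{1-\mu-\frac n\sigma(1-1/q)}$ (governing the passage between (iii) and the lower part of (ii)).

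Finally I would isolate the borderline values. When $\mu=\frac{2n}\sigma(1-1/q)>1$ or $\mu=2-\frac{2n}\sigma(1-1/q)<1$ the middle alternative $r\sigma(|\rho|+\tfrac12)=n$ of the $Z_2$ radial integral is attained, and carrying the resulting factor $\ln^{1/r}((e+t)/(e+s))$ through Hausdorff--Young yields the stated $(\ln((e+t)/(e+s)))^{1-1/q}$ corrections. The value $\mu=1$ ($\rho=0$) is genuinely special: it forces the integer-order representation \eqref{Bessel-Inteiro} in $Z_3$ together with the logarithmic $Z_2$ bound, and combining these is what produces both the extra logarithms and the term $\min\{\frac n\sigma(1-1/q),\frac12\}$ in the $t$-exponent of (iv). I expect the main obstacle to be organizational rather than conceptual: one must track four regions, two choices of $p$, and several endpoint conditions at once, and verify in every $(\mu,q)$-regime that---after the $t\mapsto s$ redistribution---the correct dominant powers of $(1+t)$ and $(1+s)$, and the right logarithmic losses, survive the summation.
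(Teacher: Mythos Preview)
Your proposal is correct and follows essentially the same route as the paper: decompose $u$ into $u^{high}+u^{Z_1}+u^{Z_2}+u^{Z_3}$, invoke Corollary~\ref{corHigh} for the high-frequency piece, feed $j=k=0$ with $p=1$ and $p=2$ into the zone estimates, and then compare the resulting $t$-rates, trading excess $t$-decay for $s$-growth via $s\le t$ to reach the stated forms. One small correction: the side condition $\tfrac{n}{\sigma}(1-\tfrac1q)\neq 1$ actually arises from the $Z_1$ radial integral with $p=1$ (where $-r\sigma+n=0$), not from $Z_2$, but this does not affect your overall strategy.
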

\begin{remark}
We point out that
$$
 \max \left \{ \frac{2n}{\sigma} \left (
1-\frac{1}{q} \right
  ); 1 \right \}  =\left \{
                      \begin{array}{ccc}
                        1 & if &  1 \leq  q \leq  \frac{2n}{[2n-\sigma]_+}\\
                        \frac{2n}{\sigma} \left ( 1-\frac{1}{q} \right
  ) & if &   q >  \frac{2n}{[2n-\sigma]_+} \\
                      \end{array}
                    \right.
$$
is equivalent to
$$
\min \left \{ 2-\frac{2n}{\sigma} \left ( 1-\frac{1}{q} \right
  ) ; 1 \right\} = \left \{
                      \begin{array}{ccc}
                        1 & if &  1 \leq q \leq  \frac{2n}{[2n-\sigma]_+} \\
                        2-\frac{2n}{\sigma} \left ( 1-\frac{1}{q} \right
  )  & if &    q >  \frac{2n}{[2n-\sigma]_+}\\
                      \end{array}
\right.\,.
$$
\end{remark}
\begin{proof}
For $n < 2 \sigma$, from \eqref{linearestimatesb} we get
\[ \|u^{high}(t,s,\cdot)\|_{L^q} \leq C
(1+t)^{-\frac{\mu}{2}}(1+s)^{\frac{\mu}{2}}\|u_1\|_{L^2}, \quad q
\geq 2.\]
The proof of (i): Suppose that $\mu> \max \{
\frac{2n}{\sigma} \left ( 1-\frac{1}{q} \right
  ); 1 \}$. If $\mu> \frac{2n}{\sigma} \left ( 1-\frac{1}{q}
  \right)$ then
\[(1+t)^{-\frac{\mu}{2}}(1+s)^{\frac{\mu}{2}}\leq (1+t)^{-\frac{n}{\sigma}\left(1-\frac1{q}\right)}(1+s)^{\frac{n}{\sigma}\left(1-\frac1{q}\right)}. \]
For $n < 2 \sigma$ we get
\[ \|u^{high}(t,s,\cdot)\|_{L^q} \leq C
(1+t)^{-\frac{n}{\sigma}\left(1-\frac1{q}\right)}(1+s)^{\frac{n}{\sigma}\left(1-\frac1{q}\right)}\|u_1\|_{L^2},
\quad q \geq 2.
\]
Applying the derived estimates at zone $Z_1$ with $p=2$ or $p=1$, respectively,  we may estimate
\begin{eqnarray*}
\|   \mathfrak{F}^{-1}(\chi(|\xi|)\chi_1(s,\xi) \psi(t,s,\xi))\ast u_1\|_{L^q}
            & \lesssim&\, (1+t)^{-\frac{\mu}2}(1+s)^{\frac{\mu}2+1-\frac{n}{\sigma}\left(\frac12-\frac1{q}\right)}\|u_1\|_{L^2}\\
             & \lesssim& (1+t)^{-\frac{n}{\sigma}\left(1-\frac1{q}\right)}(1+s)^{\frac{n}{\sigma}\left(1-\frac1{q}\right)+1-\frac{n}{\sigma}\left(\frac12-\frac1{q}\right) }\|u_1\|_{L^2}\\
             &=& (1+t)^{-\frac{n}{\sigma}\left(1-\frac1{q}\right)}(1+s)^{1+\frac{n}{2\sigma}}\|u_1\|_{L^2}\end{eqnarray*}
             or
\begin{eqnarray*}
\|   \mathfrak{F}^{-1}(\chi(|\xi|)\chi_1(s,\xi) \psi(t,s,\xi))\ast u_1\|_{L^q}
            & \lesssim&\, (1+t)^{-\frac{\mu}2}(1+s)^{\frac{\mu}2+ \max\left\{0, 1-\frac{n}{\sigma}\left(1-\frac1{q}\right)\right\}}\|u_1\|_{L^1}\\
             & \lesssim& (1+t)^{-\frac{n}{\sigma}\left(1-\frac1{q}\right)}(1+s)^{\frac{n}{\sigma}\left(1-\frac1{q}\right)+\max\left\{0, 1-\frac{n}{\sigma}\left(1-\frac1{q}\right)\right\} }\|u_1\|_{L^1}\\
             &=& (1+t)^{-\frac{n}{\sigma}\left(1-\frac1{q}\right)}(1+s)^{\max\left\{1 , \frac{n}{\sigma}\left(1-\frac1{q}\right)\right\}}\|u_1\|_{L^1}.\end{eqnarray*}

In $Z_2$ and $Z_3$ we have the following estimate:
\[\|   u(t,\cdot)\|_{L^q}
\lesssim\,(1+t)^{-\frac{n}{\sigma}\left(1-\frac1{q}\right)}  (1+s)
\|u_1\|_{L^1}.\]
Thus (i) is concluded.\\
The proof of (ii): In $Z_1$ we have
\begin{eqnarray*}
% \nonumber to remove numbering (before each equation)
  \| u(t,\cdot)\|_{L^q} &  \lesssim &  (1+t)^{-\frac{\mu}2}(1+s)^{1+\frac{\mu}2-\frac{n}{\sigma}\left(\frac12-\frac1{q}\right)}
          \|u_1\|_{L^2} \\
   & = & (1+t)^{-\frac{\mu}{2}} (1+s)^{
1+\frac{\mu}{2}-\frac{n}{\sigma}\left(1-\frac1{q}\right)+
\frac{n}{2\sigma}} \|u_1\|_{L^2}
\end{eqnarray*}
or
\begin{eqnarray*}
\|   \mathfrak{F}^{-1}(\chi(|\xi|)\chi_1(s,\xi) \psi(t,s,\xi))\ast u_1\|_{L^q}
            & \lesssim&\, (1+t)^{-\frac{\mu}2}(1+s)^{\frac{\mu}2+ \max\left\{0, 1-\frac{n}{\sigma}\left(1-\frac1{q}\right)\right\}}\|u_1\|_{L^1}.
             \end{eqnarray*}
Suppose that $1 < \mu \leq
\frac{2n}{\sigma} \left ( 1-\frac{1}{q} \right)$. We have in $Z_2$
\[\|   u(t,\cdot)\|_{L^q}
\lesssim\,(1+t)^{-\frac{\mu}{2}}
(1+s)^{1+\frac{\mu}{2}-\frac{n}{\sigma}\left ( 1-\frac{1}{q}
\right)} \|u_1\|_{L^1}
\begin{cases} \left(\ln {\left (
\frac{e+t}{e+s}\right )}\right)^{1-\frac{1}{q}}, \quad  \mu=\frac{2n}{\sigma} \left (
1-\frac{1}{q} \right ) \\
1, \quad  1<\mu< \frac{2n}{\sigma} \left ( 1-\frac{1}{q} \right
  ).
\end{cases} \,
\]
For  $\mu \leq \frac{2n}{\sigma} \left ( 1-\frac{1}{q} \right)$ we
have
\[
(1+t)^{-\frac{n}{\sigma}\left(1-\frac1{q}\right)}(1+s)\leq
(1+t)^{-\frac{\mu}{2}+\frac{\mu}{2}-\frac{n}{\sigma}\left(1-\frac1{q}\right)}(1+s)\leq
(1+t)^{-\frac{\mu}{2}} (1+s)^{1+\frac{\mu}{2}-\frac{n}{\sigma}\left(1-\frac1{q}\right)},
\]
hence in $Z_3$ we obtain
\begin{eqnarray*}
% \nonumber to remove numbering (before each equation)
\| u(t,\cdot)\|_{L^q}  & \lesssim & (1+t)^{-\frac{n}{\sigma}\left(1-\frac1{q}\right)}(1+s) \|u_1\|_{L^1}\\
   & \lesssim &
   (1+t)^{-\frac{\mu}{2}}(1+s)^{1+\frac{\mu}{2}-\frac{n}{\sigma}\left(1-\frac1{q}\right)} \|u_1\|_{L^1}.
\end{eqnarray*}
%
%because of $1-\frac{n}{\sigma}\left(\frac12-\frac1{q}\right)=1-\frac{n}{\sigma}\left(1-\frac1{q}\right)+
%\frac{n}{2\sigma}>0$ for $n<2\sigma$.
%\[\| u(t,\cdot)\|_{L^q}
%\lesssim \, (1+t)^{-\frac{\mu}{2}}
%(1+s)^{\max{\left\{1,\frac{\mu}{2}\right\}}\|u_1\|_{L^1\cap L^2}};
%\]
%for $ 1\leq n \leq \frac{q}{q-1}\sigma $
%\[\| u(t,\cdot)\|_{L^q}
%\lesssim \, (1+t)^{-\frac{\mu}{2}} (1+s) \|u_1\|_{L^1\cap L^2}
%\]
%since $1<\mu<2$.
Suppose that $\max \{  2-\frac{2n}{\sigma} \left ( 1-\frac{1}{q}
\right ) ; 0  \} < \mu < 1$ or $ \mu =2-\frac{2n}{\sigma} \left (
1-\frac{1}{q} \right ) $. We have in $Z_2$
\[\|   u(t,\cdot)\|_{L^q}
\lesssim \,(1+t)^{-\frac{\mu}{2}}
(1+s)^{1+\frac{\mu}{2}-\frac{n}{\sigma}\left(1-\frac1{q}\right)}
\|u_1\|_{L^1} \begin{cases} \left (\ln {\left (
\frac{e+t}{e+s}\right )}\right )^{1-\frac{1}{q}}, \quad  \mu
=2-\frac {2n}{\sigma} \left ( 1-\frac{1}{q}
\right )  \\
1, \quad \max \{  2-\frac{2n}{\sigma} \left ( 1-\frac{1}{q} \right )
; 0  \}  < \mu < 1.
\end{cases} \,
\]
If $\mu \geq 2-\frac{2n}{\sigma} \left ( 1-\frac{1}{q} \right ),$
then
\[
(1+t)^{1-\mu-\frac{n}{\sigma}\left(1-\frac1{q}\right)}(1+s)^{\mu}
\leq (1+t)^{-\frac{\mu}{2}}
(1+t)^{-\frac{\mu}{2}+1-\frac{n}{\sigma}\left(1-\frac1{q}\right)}
(1+s)^{\mu} \leq (1+t)^{-\frac{\mu}{2}}
(1+s)^{\frac{\mu}{2}+1-\frac{n}{\sigma}\left(1-\frac1{q}\right)}.
\]
Hence, we have in $Z_3$
\begin{eqnarray*}
% \nonumber to remove numbering (before each equation)
\|   u(t,\cdot)\|_{L^q}   & \lesssim &  (1+t)^{1-\mu-\frac{n}{\sigma}\left(1-\frac1{q}\right)}(1+s)^{\mu} \|u_1\|_{L^1}\\
 & \lesssim & (1+t)^{-\frac{\mu}{2}} (1+s)^{1+\frac{\mu}{2}-\frac{n}{\sigma}\left(1-\frac1{q}\right)}
\|u_1\|_{L^1}.
\end{eqnarray*}
%and in $Z_1$ we have
%\begin{eqnarray*}
%% \nonumber to remove numbering (before each equation)
%  \| u(t,\cdot)\|_{L^q} &  \lesssim &  (1+t)^{-\frac{\mu}2}(1+s)^{1+\frac{\mu}2-\frac{n}{\sigma}\left(\frac12-\frac1{q}\right)}
%          \|u_1\|_{L^2} \\
%   &  \lesssim & (1+t)^{-\frac{\mu}{2}} (1+s)^{
%1+\frac{\mu}{2}-\frac{n}{\sigma}\left(1-\frac1{q}\right)+
%\frac{n}{2\sigma}} \|u_1\|_{L^2}
%\end{eqnarray*}
The proof of (iii): Suppose that $0 < \mu < \min
\{ 2-\frac{2n}{\sigma} \left ( 1-\frac{1}{q} \right
  ) ; 1 \}$. If $ \mu < 2-\frac{2n}{\sigma} \left ( 1-\frac{1}{q} \right
  )$, then
\[
(1+t)^{-\frac{\mu}{2}}(1+s)^{\frac{\mu}{2}}\leq (1+t)^{1-\mu-\frac{n}{\sigma}\left(1-\frac1{q}\right)}(1+s)^{\mu-1+\frac{n}{\sigma}\left(1-\frac1{q}\right)}.
\]
For $n < 2 \sigma$ we get
\[ \|u^{high}(t,s,\cdot)\|_{L^q} \leq C (1+t)^{1-\mu-\frac{n}{\sigma}\left(1-\frac1{q}\right)}(1+s)^{\mu-1+\frac{n}{\sigma}\left(1-\frac1{q}\right)}
\|u_1\|_{L^2}, \quad q \geq 2.\] We obtain the following estimates:
in $ Z_2 \cup Z_3$
\[\|   u(t,\cdot)\|_{L^q}
\lesssim\,(1+t)^{1-\mu-\frac{n}{\sigma}\left(1-\frac1{q}\right)}
(1+s)^{\mu} \|u_1\|_{L^1}
\]
and in $Z_1$
\begin{eqnarray*}
% \nonumber to remove numbering (before each equation)
 \| u(t,\cdot)\|_{L^q} & \lesssim &  (1+t)^{-\frac{\mu}{2}} (1+s)^{1+\frac{\mu}2-\frac{n}{\sigma}\left(\frac12-\frac1{q}\right)}
          \|u_1\|_{L^2}\\
   & \lesssim & (1+t)^{1-\mu-\frac{n}{\sigma}\left(1-\frac1{q}\right)}(1+s)^{\mu+\frac{n}{2\sigma}} \|u_1\|_{L^2}
\end{eqnarray*}
or
\begin{eqnarray*}
\|   \mathfrak{F}^{-1}(\chi(|\xi|)\chi_1(s,\xi) \psi(t,s,\xi))\ast
u_1\|_{L^q}
            & \lesssim&\, (1+t)^{1-\mu-\frac{n}{\sigma}\left(1-\frac1{q}\right)}
 (1+s)^{\max\left\{\mu ,
\mu-1+\frac{n}{\sigma}\left(1-\frac1{q}\right)\right\}}\|u_1\|_{L^1}.
             \end{eqnarray*}

Hence we obtain
\[\| u(t,\cdot)\|_{L^q}
\lesssim \, (1+t)^{1-\mu-\frac{n}{\sigma}\left(1-\frac1{q}\right)}
\left( (1+s)^{\mu} \|u_1\|_{L^1}
 +(1+s)^{\mu+\frac{n}{2\sigma}}\|u_1\|_{L^2}
\right)
\]
 or
\[\| u(t,\cdot)\|_{L^q}
\lesssim \, (1+t)^{1-\mu-\frac{n}{\sigma}\left(1-\frac1{q}\right)}
\left( (1+s)^{\max\left\{\mu ,
\mu-1+\frac{n}{\sigma}\left(1-\frac1{q}\right)
\right\}}\|u_1\|_{L^1} +
(1+s)^{\mu-1+\frac{n}{\sigma}\left(1-\frac1{q}\right)}\|u_1\|_{L^2}
\right).
\]
The proof of (iv): Suppose that $\mu=1$. We have in $Z_3$
\[\|   u(t,\cdot)\|_{L^q}
\lesssim\,(1+t)^{-\frac{n}{\sigma}\left(1-\frac1{q}\right)} (1+s)
\ln{\left ( \frac{e+t}{e+s}\right )} \|u_1\|_{L^1} ,
\]
in $Z_2$
\[\|   u(t,\cdot)\|_{L^q}
\lesssim\, \|u_1\|_{L^1}
   \begin{cases}
    (1+t)^{-\frac12}(1+s)^{-\frac{n}{\sigma}\left(1-\frac1{q}\right)+\frac32}\ln{\left ( \frac{e+t}{e+s}\right )}, \quad q > \frac{2n}{[2n-\sigma]_+} \\
    (1+t)^{-\frac12}(1+s)\ln^{2-\frac{1}{q}} {\left ( \frac{e+t}{e+s}\right )}, \quad q = \frac{2n}{[2n-\sigma]_+}\\
    (1+t)^{-\frac{n}{\sigma}\left(1-\frac1{q}\right)}(1+s)\ln{\left ( \frac{e+t}{e+s}\right )}, \quad
    q < \frac{2n}{[2n-\sigma]_+}
    \end{cases}\,,
\]
and in $Z_1$
\[\| u(t,\cdot)\|_{L^q}
          \lesssim\, (1+t)^{-\frac{1}2}(1+s)^{\frac{3}2-\frac{n}{\sigma}\left(\frac12-\frac1{q}\right)}
          \|u_1\|_{L^2}.
          \]
\end{proof}
\begin{corollary}\label{corolinear}
Let $\sigma > 1$, $1 \leq n < 2 \sigma$,  and $q\geq 2$.
\begin{description}
  \item[(i)] If $\mu> \max \{ \frac{2n}{\sigma} \left ( 1-\frac{1}{q} \right
  ); 1 \}$, then the solution to \eqref{mainlineq} with $s=0$ satisfies
\[\|   u(t,\cdot)\|_{L^q}
\lesssim\,  (1+t)^{ -\frac{n}{\sigma}\left(1-\frac1{q}\right)  }  \|u_1\|_{L^1 \cap L^2};
\]
\item[(ii)]
If $\max \{  2-\frac{2n}{\sigma} \left ( 1-\frac{1}{q} \right
  ) ; 0  \} < \mu < 1$ or $1 < \mu < \frac{2n}{\sigma} \left ( 1-\frac{1}{q} \right ) $, then
the solution to \eqref{mainlineq} with $s=0$ satisfies
\[\| u(t,\cdot)\|_{L^q}
\lesssim \, (1+t)^{-\frac{\mu}{2}} \|u_1\|_{L^1 \cap L^2};
\]
Moreover, if $\mu=\frac{2n}{\sigma} \left (1-\frac{1}{q} \right )
>1$, then
 \[\| u(t,\cdot)\|_{L^q}
\lesssim \,
(1+t)^{-\frac{n}{\sigma}\left(1-\frac1{q}\right)}\left(
\left(\ln(e+t)\right)^{1-\frac{1}{q}}
 \|u_1\|_{L^1}
+ \|u_1\|_{L^2} \right)
\]
whereas if $ \mu = 2-\frac{2n}{\sigma} \left ( 1-\frac{1}{q} \right
  )<1, $ then
 \[\| u(t,\cdot)\|_{L^q}
\lesssim \,
(1+t)^{\frac{n}{\sigma}\left(1-\frac1{q}\right)-1}\left(
\left(\ln(e+t)\right)^{1-\frac{1}{q}}
 \|u_1\|_{L^1}
+\|u_1\|_{L^2} \right);
\]

\item[(iii)]
 If $ 0 < \mu < \min \{ 2-\frac{2n}{\sigma} \left
( 1-\frac{1}{q} \right
  ) ; 1 \}, $
then the solution to \eqref{mainlineq} with $s=0$ satisfies
\[\| u(t,\cdot)\|_{L^q}
\lesssim \,
(1+t)^{1-\mu-\frac{n}{\sigma}\left(1-\frac1{q}\right)}
 \|u_1\|_{L^1 \cap L^2};
\]

\item[(iv)]  If $\mu=1$,  then the solution to
\eqref{mainlineq} with $s=0$ satisfies
\begin{eqnarray*}
% \nonumber to remove numbering (before each equation)
 \| u(t, \cdot)\|_{L^q}  & \lesssim & (1+t)^{-\frac{1}{2}} \|u_1\|_{L^2} \\
   &+ & \|u_1\|_{L^1} (1+t)^{-\min \{ \frac{n}{\sigma}\left(
1-\frac1q\right); \frac{1}{2} \} }
\begin{cases}
\ln(e+t) , \quad  q \neq \frac{2n}{[2n-\sigma]_+}
\\
(\ln(e+t))^{2-\frac{1}{q}}, \quad  q =  \frac{2n}{[2n-\sigma]_+}.
\end{cases}
\end{eqnarray*}
\end{description}
\end{corollary}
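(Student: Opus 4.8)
The plan is to derive Corollary~\ref{corolinear} as the direct specialization of Theorem~\ref{theoremlinear} to the initial time $s=0$. Since the Cauchy problem \eqref{mainlineq} in the corollary is posed with $s=0$, every factor of the form $(1+s)^\beta$ occurring in the estimates of Theorem~\ref{theoremlinear} collapses to $1$, and every logarithmic factor $\ln\left(\frac{e+t}{e+s}\right)$ becomes $\ln\left(\frac{e+t}{e}\right)$, which satisfies $\ln\left(\frac{e+t}{e}\right)\leq\ln(e+t)$ for all $t\geq0$. Thus the first step is simply to put $s=0$ in each of the four items (i)--(iv) of Theorem~\ref{theoremlinear} and read off the surviving $(1+t)$-powers; I would also note that the two alternative (``or'') forms supplied there coincide at $s=0$, so no ambiguity remains in the resulting bound.

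For items (i), (ii), (iii) this substitution is immediate. In (i) the right-hand side becomes $(1+t)^{-\frac{n}{\sigma}(1-\frac1q)}\big(\|u_1\|_{L^1}+\|u_1\|_{L^2}\big)$, which is the claimed bound since $\|u_1\|_{L^1}+\|u_1\|_{L^2}\simeq\|u_1\|_{L^1\cap L^2}$. In (ii) the generic non-logarithmic range yields $(1+t)^{-\frac\mu2}\|u_1\|_{L^1\cap L^2}$, while the two borderline values $\mu=\frac{2n}{\sigma}(1-\frac1q)>1$ and $\mu=2-\frac{2n}{\sigma}(1-\frac1q)<1$ retain the single factor $(\ln(e+t))^{1-\frac1q}$ once $s=0$, reproducing the stated decay rates $(1+t)^{-\frac{n}{\sigma}(1-\frac1q)}$ and $(1+t)^{\frac{n}{\sigma}(1-\frac1q)-1}$ respectively. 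In (iii) one obtains $(1+t)^{1-\mu-\frac{n}{\sigma}(1-\frac1q)}\|u_1\|_{L^1\cap L^2}$ directly.

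The only point requiring a little bookkeeping is item (iv), $\mu=1$. Here the $L^2$-contribution reduces to $(1+t)^{-\frac12}\|u_1\|_{L^2}$, while for the $L^1$-contribution the uniform prefactor $(1+t)^{-\min\{\frac{n}{\sigma}(1-\frac1q);\frac12\}}$ already encodes both regimes: for $q>\frac{2n}{[2n-\sigma]_+}$ one has $\frac{n}{\sigma}(1-\frac1q)>\frac12$, and for $q<\frac{2n}{[2n-\sigma]_+}$ one has $\frac{n}{\sigma}(1-\frac1q)<\frac12$. Setting $s=0$ turns the logarithmic factors of the first and third subcases of Theorem~\ref{theoremlinear}(iv) into a common $\ln(e+t)$, so these two subcases merge into the single condition $q\neq\frac{2n}{[2n-\sigma]_+}$, whereas the critical value $q=\frac{2n}{[2n-\sigma]_+}$ keeps the stronger power $(\ln(e+t))^{2-\frac1q}$, as stated.

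Because every estimate is inherited verbatim from Theorem~\ref{theoremlinear} after the substitution $s=0$, there is no genuine analytic obstacle; the main care is purely notational, namely simplifying the $s$-dependent prefactors to $1$, bounding $\ln\left(\frac{e+t}{e}\right)$ by $\ln(e+t)$, and correctly merging the logarithmic exponents in item (iv). The hardest part is thus only verifying that the threshold $q=\frac{2n}{[2n-\sigma]_+}$ is handled consistently across the two pieces of the estimate, which follows from the elementary comparison of $\frac{n}{\sigma}(1-\frac1q)$ with $\frac12$ recorded above.
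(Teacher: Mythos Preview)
Your proposal is correct and matches the paper's approach: the corollary is stated immediately after Theorem~\ref{theoremlinear} without a separate proof, precisely because it is the specialization $s=0$ you describe, with the same bookkeeping (collapsing the $(1+s)$-factors, replacing $\ln\bigl(\tfrac{e+t}{e+s}\bigr)$ by $\ln(e+t)$, and merging the first and third logarithmic subcases in item~(iv)).
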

%%%%%%%%%%%%%%%%%%%%%%%%%%%%%%%%%%%

%%%%%%%%%%%%%%%%%%%%%%%%%%%%%%%%%%%%
%\begin{theorem}\label{energy} Let $j,k\in \N$ such that $ j+ k=1$ we have
%\[\| \partial_t^k(- \Delta)^{\frac{j\sigma}{2}}  u(t,\cdot)\|_{L^2}
%\lesssim\, \|u_1\|_{ L^2}\begin{cases} (1+s)^{ \frac{\mu}{2}}(1+t)^{ -\frac{\mu}{2}}, \qquad \mu \in ]0, 2], \\
%(1+s)(1+t)^{-1}, \qquad \mu\geq 2.
%\end{cases}
%\]
%\end{theorem}
%\begin{proof}  The   decay in zones $Z_{high}$ and $Z_1$  is given by $(1+s)^{ \frac{\mu}{2}}(1+t)^{ -\frac{\mu}{2}}$. The same is true in $Z_2$ for $\mu \in (0,2)$,  whereas  it is $(1+t)^{-1}(1+s)$ for $\mu\geq 2$ in  $Z_2$  and in $Z_3$ for  $\mu>1$.
%Now,  $\mu\geq 2$,   using that
%\[(1+t)^{-\frac{\mu}{2}}(1+s)^{\frac{\mu}{2}}\leq (1+t)^{-1}(1+s)\]
%we conclude the desired estimate for $\mu>1$. For $\mu\leq 1$ the decay in $Z_3$ is given by $(1+s)^{ \mu}(1+t)^{ -\mu}$ and the result is conclude.\end{proof}

%%%%%%%%%%%%%%%%%%%%%%%%%%%%%%%%%%%%
%%%%%%%%%%%%%%%%%%%%%%%%%%%%%%%%%%%%%
%%%%%%%%%%%%%%%%%%%%%%%%%%%%%%%%%%%%%%
We complete this section with some energy estimates:
\begin{theorem}\label{fractionalderivatives} Let $\sigma > 1$, $1 \leq n < 2 \sigma$ and $\gamma\in [0, \sigma]$.
    \begin{description}
        \item[(i)] If $\mu> \max \{ \frac{n+2\gamma}{\sigma}; 1 \}$, then the solution to \eqref{mainlineq}  satisfies
        \[\|   u(t,\cdot)\|_{\dot{H}^\gamma}
        \lesssim\,
        (1+t)^{-\frac{n}{2\sigma}-\frac{\gamma}{\sigma}}(1+s)\Big(\|u_1\|_{ L^1}+ (1+s)^{\frac{n}{2\sigma} }\|u_1\|_{ L^2}\Big);
        \]
        \item[(ii)] If $\max \{ 2-\frac{n}{\sigma} -\frac{2\gamma}{\sigma} ; 0 \} < \mu < 1$ or $1< \mu < \frac{n+2\gamma}{\sigma},$ then the solution to \eqref{mainlineq}  satisfies
        \[\|   u(t,\cdot)\|_{\dot{H}^\gamma}
        \lesssim\,
        (1+t)^{-\frac{\mu}{2}}(1+s)^{1+\frac{\mu}{2} -\frac{n}{2\sigma}-\frac{\gamma}{\sigma}}
        \Big(\|u_1\|_{ L^1}+ (1+s)^{\frac{n}{2\sigma} }\|u_1\|_{ L^2}\Big),
        \]
        Moreover, if $\mu=\frac{n+2\gamma}{\sigma}
        >1$, then
        \[\| u(t,\cdot)\|_{\dot{H}^\gamma}
        \lesssim \,
        (1+t)^{-\frac{\mu}{2}}(1+s) \Big( \Big(\ln\Big(\frac{e+t}{e+s}\Big)\Big)^{\frac{1}{2}} \|u_1\|_{ L^1}+ (1+s)^{\frac{n}{2\sigma} }\|u_1\|_{ L^2}\Big),
        \]
        whereas if $ \mu = 2-\frac{n}{\sigma} -\frac{2\gamma}{\sigma}<1$, then
        \[\| u(t,\cdot)\|_{\dot{H}^\gamma}
        \lesssim \,
        (1+t)^{ \frac{n}{2\sigma}-1-\frac{\gamma}{\sigma}}(1+s)^{\mu}
        \Big(\ln {\Big ( \frac{e+t}{e+s}\Big) }\Big)^{\frac{1}{2}}
        \|u_1\|_{L^1}
        +(1+s)^{\frac{n}{2\sigma} } \|u_1\|_{L^2}\Big);
        \]
        \item[(iii)] If  $ 0 < \mu < \min \{ 2-\frac{n}{\sigma} -\frac{2\gamma}{\sigma} ; 1 \} $, then the solution to \eqref{mainlineq}  satisfies
        \[\|   u(t,\cdot)\|_{\dot{H}^\gamma}
        \lesssim\,
        (1+t)^{1-\mu-\frac{n}{2\sigma}-\frac{\gamma}{\sigma}}(1+s)^\mu \Big(\|u_1\|_{ L^1}+ (1+s)^{\frac{n}{2\sigma} }\|u_1\|_{ L^2}\Big).
        \]
    %   \item[(iv)] If $\mu=1$, $\cdots$
    \end{description}   Moreover, the $ \| \partial_t u(t,\cdot)\|_{L^2}$ satisfies the same decay estimates  of $\| (- \Delta)^{\frac{\sigma}{2}}  u(t,\cdot)\|_{L^2}$.
    \end{theorem}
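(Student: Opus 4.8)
The plan is to recognize that this energy theorem is precisely the endpoint $q=2$ version of Theorem~\ref{theoremlinear}, but with the derivative loading $|\xi|^{\gamma}$, $\gamma\in[0,\sigma]$, in place of $|\xi|^{j\sigma}$. Since $\dot H^\gamma$ is an $L^2$-based norm, Plancherel's theorem gives
\[
\|u(t,\cdot)\|_{\dot H^\gamma}=\big\||\xi|^{\gamma}\psi(t,s,\xi)\,\hat u_1(\xi)\big\|_{L^2},
\]
so no Hausdorff--Young inequality is needed on the output side. First I would split by the cut-offs $\chi_1,\chi_2,\chi_3$ and $1-\chi$ into the four zones $Z_{high},Z_1,Z_2,Z_3$ and estimate each contribution separately, pairing the pointwise multiplier bounds already produced in Section~\ref{lplqestimates} (via the Hankel/Bessel asymptotics of Lemma~\ref{hankel}) either with $\|\hat u_1\|_{L^2}=\|u_1\|_{L^2}$ by Plancherel, or with $\|\hat u_1\|_{L^\infty}\le\|u_1\|_{L^1}$ by Hausdorff--Young at $p=1$.

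In $Z_{high}$ the bound $|(1-\chi(|\xi|))|\xi|^{\gamma}\psi|\lesssim (1+t)^{-\mu/2}(1+s)^{\mu/2}|\xi|^{\gamma-\sigma}$ follows from the order-$(-\tfrac12)$ symbol factorization of $m_0$; since $\gamma\le\sigma$ the factor $|\xi|^{\gamma-\sigma}$ is bounded for $|\xi|\ge1$, so pairing with $\|u_1\|_{L^2}$ reproduces \eqref{linearestimatesb} at the level of $\dot H^\gamma$. In the low-frequency zones I would repeat the computations of Section~\ref{lplqestimates} verbatim, replacing $|\xi|^{j\sigma}$ by $|\xi|^{\gamma}$ and setting the output exponent $q=2$. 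The only integral that enters is $\int|\xi|^{2\gamma-\sigma(2|\rho|+1)}\,d\xi$ over the relevant annulus (or ball), and it is the sign of $2\gamma-\sigma(2|\rho|+1)+n$ that produces the several cases: with $|\rho|=\tfrac{|1-\mu|}{2}$ this threshold reads $\mu\gtrless\frac{n+2\gamma}{\sigma}$ when $\mu>1$ and $\mu\gtrless2-\frac{n+2\gamma}{\sigma}$ when $\mu<1$, matching the hypotheses of cases (i)--(iii); the borderline values $\mu=\frac{n+2\gamma}{\sigma}$ and $\mu=2-\frac{n+2\gamma}{\sigma}$ give the factors $\big(\ln\tfrac{e+t}{e+s}\big)^{1/2}$, the exponent $\tfrac12$ being the $q=2$ specialization of the $(1-\tfrac1q)$ of Theorem~\ref{theoremlinear}. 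This is why the role played there by $\frac{2n}{\sigma}(1-\tfrac1q)$ is now played by $\frac{n+2\gamma}{\sigma}$.

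Having the zone-by-zone bounds, I would combine them exactly as in the proof of Theorem~\ref{theoremlinear}: using $s\le t$ to trade a power of $(1+t)$ for a power of $(1+s)$, one checks in each of the four $\mu$-regimes which zone dominates and that the dominant bound is the asserted one. The identity $\rho=\frac{1-\mu}{2}$ turns the prefactor $\frac{(1+t)^{\rho}}{(1+s)^{\rho-1}}$ of $\psi$ into the $(1+t)^{-\mu/2}(1+s)^{\mu/2}$-type weights, and the constraint $\gamma\le\sigma$ is what guarantees, e.g. in case~(i), that the $Z_{high}$ growth $(1+s)^{n/2\sigma+\gamma/\sigma}$ is absorbed by the stated $(1+s)^{1+n/2\sigma}$. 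The resonant subcase $\rho\in\Z$ in $Z_3$ produces an extra logarithm which, exactly as in Section~\ref{lplqestimates}, is harmless for $\mu\neq1$ and only contributes the stated logarithmic loss in the critical cases.

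Finally, for the assertion that $\|\partial_t u\|_{L^2}$ obeys the same estimates as $\|(-\Delta)^{\sigma/2}u\|_{L^2}$, I would note that both correspond to total order $k+j=1$ and compare the two multipliers directly. For $k=1,j=0$ and for $k=0,j=1$ (that is, $\gamma=\sigma$) the pointwise zone bounds differ only in the $|\xi|$-powers carried by the two determinant terms, but after integrating over $Z_2,Z_3$ against $d\xi$ these powers rearrange into identical $(1+t),(1+s)$-weights; hence the two norms decay at the same rate. The main obstacle is not any single estimate but the bookkeeping: correctly identifying the dominant zone in each of the four regimes and tracking the borderline logarithmic contributions, precisely as in the (already somewhat intricate) proof of Theorem~\ref{theoremlinear}.
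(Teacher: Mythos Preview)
Your proposal is correct and follows essentially the same approach as the paper: set $q=2$, replace $j\sigma$ by $\gamma$, use Corollary~\ref{corHigh} for $Z_{high}$, and rerun the zone-by-zone computations of Section~\ref{lplqestimates} with $p=1$ (for the $L^1$ contribution) and $p=2$ (for the $L^2$ contribution), combining them exactly as in the proof of Theorem~\ref{theoremlinear}. Your identification of the thresholds $\mu=\frac{n+2\gamma}{\sigma}$ and $\mu=2-\frac{n+2\gamma}{\sigma}$, and of the $\tfrac12$-power on the logarithm as the $q=2$ specialization of $1-\tfrac1q$, matches the paper precisely.
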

\begin{proof}
    For $n < 2 \sigma$, from Corollary \ref{corHigh} we get
        \[ \||\xi|^{\gamma} u^{high}(t,s,\cdot)\|_{L^2} \leq C
    (1+t)^{-\frac{\mu}{2}}(1+s)^{\frac{\mu}{2}}\|u_1\|_{L^2}, \quad \gamma\in[0, \sigma].\]
        Putting $q=2$, $j\sigma = \gamma$, $k=0$ and $\frac1r = \frac1{2}-\frac1{p'}=\frac1p-\frac12$  and following the calculations on Section \ref{lplqestimates} we get:\\
%    In $Z_3$ we have
%   for $\rho \neq 0$ and $1\leq p\leq 2$, the estimate,
%    \begin{align*}
%    \|   \mathfrak{F}^{-1}(\chi_3(t, s,\xi)|\xi|^{\gamma}
%    \psi(t,s,\xi))\ast u_1\|_{L^2}
%    &\lesssim\,(1+t)^{-\frac{n}{\sigma}\left(\frac1p-\frac12\right)-\frac{\gamma}{\sigma}+\rho+|\rho|}(1+s)^{1-\rho-|\rho|}
%    \|u_1\|_{L^p}\,
%    \end{align*}
%%If $\rho \in \mathbb{Z}$ and $\rho \neq 0$ it is possible to derive the same estimatives as the previous case.
%\\
%    In $Z_2$ we have for $\rho \neq 0$ and $1\leq p\leq 2$, the estimate
%%
%\begin{align*}
%\|    \mathfrak{F}^{-1}(\chi_2(t, s,\xi)|\xi|^{\gamma}
%\psi(t,s,\xi))\ast u_1\|_{L^2}
%&\lesssim  (1+s)^{1-\rho-|\rho|}(1+t)^{\rho-\frac12}\|u_1\|_{L^p} \\
%&    \times  \begin{cases}
%(1+s)^{-\frac{n}{r\sigma}+|\rho|+\frac12-\frac{\gamma}{\sigma}}, \quad  n + r\gamma>r\sigma(|\rho|+\frac12) \\
%\ln^{\frac{1}{r}} {\left ( \frac{e+t}{e+s}\right )}, \quad n + r\gamma=r\sigma(|\rho|+\frac12) \\
%(1+t)^{-\frac{n}{r\sigma}+|\rho|+\frac12-\frac{\gamma}{\sigma}}, \quad
%n + r\gamma<r\sigma(|\rho|+\frac12)
%\end{cases}\,.
%\end{align*}
%    \\
%In $Z_1$ we have for $1\leq p\leq 2$, the estimate
%%
%\begin{align*}
%\|
%\mathfrak{F}^{-1}(\chi(|\xi|)\chi_1(s,\xi)|\xi|^{\gamma}
%\psi(t,s,\xi))\ast u_1\|_{L^2}
%& \lesssim\, (1+s)^{\frac{\mu}2}(1+t)^{-\frac{\mu}2}\|u_1\|_{L^p} \\
%&    \times  \begin{cases}
%1, \quad r(-\sigma+\gamma)+n>0 \\
%\ln^{\frac{1}{r}}{(e+s)}, \quad r(-\sigma+\gamma)+n=0\\
%(1+s)^{1-\frac{\gamma}{\sigma}-\frac{n}{r\sigma}}, \quad r(-\sigma+\gamma)+n<0
%\end{cases}\,.
%\end{align*}
     The proof of (i): Suppose $\mu> \max \{ \frac{n+2\gamma}{\sigma}; 1 \}$.
    In $Z_3$ we have
    for $ p=1$, the estimate
    \begin{align*}
    \|   \mathfrak{F}^{-1}(\chi_3(t, s,\xi)|\xi|^{\gamma}
    \psi(t,s,\xi))\ast u_1\|_{L^2}
    &\lesssim\,(1+t)^{-\frac{n}{2\sigma}-\frac{\gamma}{\sigma}}(1+s)
    \|u_1\|_{L^1}.
    \end{align*}
        In $Z_2$ we have
        for $ p=1$, the estimate
        \begin{align*}
        \|   \mathfrak{F}^{-1}(\chi_2(t, s,\xi)|\xi|^{\gamma}
        \psi(t,s,\xi))\ast u_1\|_{L^2}
        &\lesssim\,(1+t)^{-\frac{n}{2\sigma}-\frac{\gamma}{\sigma}}(1+s)
        \|u_1\|_{L^1}.
        \end{align*}
        In $Z_1$ we have
        for $ p=2$, the estimate
        \begin{align*}
        \|   \mathfrak{F}^{-1}(\chi(|\xi|) \chi_1(t, s,\xi)|\xi|^{\gamma}
        \psi(t,s,\xi))\ast u_1\|_{L^2}
        &\lesssim\,(1+t)^{-\frac{\mu}{2}}(1+s)^{\max\{ \frac{\mu}{2}, \frac{\mu}{2}+1-\frac{\gamma}{\sigma}\} }
        \|u_1\|_{L^2}\\
        &\lesssim\,(1+t)^{-\frac{n}{2\sigma}-\frac{\gamma}{\sigma}}(1+s)^{\max\{ \frac{n}{2\sigma}+\frac{\gamma}{\sigma}, 1+\frac{n}{2\sigma}\} }
        \|u_1\|_{L^2}.
        \end{align*}
         The proof of (ii):
Suppose that $1 < \mu \leq
\frac{n+2\gamma}{\sigma}$.
    In $Z_1$ we have
    for $ p=2$, the estimate
    \begin{align*}
    \|   \mathfrak{F}^{-1}(\chi(|\xi|)\chi_1(t, s,\xi)|\xi|^{\gamma}
    \psi(t,s,\xi))\ast u_1\|_{L^2}
    &\lesssim\,(1+t)^{-\frac{\mu}{2}}(1+s)^{\max\{ \frac{\mu}{2}, \frac{\mu}{2}+1-\frac{\gamma}{\sigma}\} }
    \|u_1\|_{L^2}.
    \end{align*}
We have in $Z_2$
\[\|   \mathfrak{F}^{-1}(\chi_2(t, s,\xi)|\xi|^{\gamma}
\psi(t,s,\xi))\ast u_1\|_{L^2}
\lesssim\,(1+t)^{-\frac{\mu}{2}}
(1+s)^{1+\frac{\mu}{2}-\frac{n}{2\sigma}-\frac{\gamma}{\sigma}} \|u_1\|_{L^1}
\begin{cases} \left(\ln {\left (
    \frac{e+t}{e+s}\right )}\right)^{
    \frac{1}{2}}, \quad  \mu=\frac{n+2\gamma}{\sigma} \\
1, \quad  1<\mu< \frac{n+2\gamma}{\sigma}
\end{cases} \,
\]
If  $\mu \leq \frac{n+2\gamma}{\sigma}$, then we
have in $Z_3$
\begin{eqnarray*}
    % \nonumber to remove numbering (before each equation)
\|   \mathfrak{F}^{-1}(\chi_3(t, s,\xi)|\xi|^{\gamma}
\psi(t,s,\xi))\ast u_1\|_{L^2}
    & \lesssim & (1+t)^{-\frac{n}{2\sigma}-\frac{\gamma}{\sigma}}(1+s)\|u_1\|_{L^1}\\
        & \lesssim &
    (1+t)^{-\frac{\mu}{2}}(1+s)^{1+\frac{\mu}{2}-\frac{n}{2\sigma}-\frac{\gamma}{\sigma}} \|u_1\|_{L^1}.
\end{eqnarray*}
%%%%%%%%%%
Suppose that $\max \{  2-\frac{n}{\sigma} -\frac{2\gamma}{\sigma} ; 0  \} < \mu < 1$ or $ \mu = 2-\frac{n}{\sigma} -\frac{2\gamma}{\sigma}  $. We have in $Z_2$
\[ \|   \mathfrak{F}^{-1}(\chi_2(t, s,\xi)|\xi|^{\gamma}
\psi(t,s,\xi))\ast u_1\|_{L^2}
\lesssim \,(1+t)^{-\frac{\mu}{2}}
(1+s)^{1+\frac{\mu}{2}-\frac{n}{2\sigma}-\frac{\gamma}{\sigma}}
\|u_1\|_{L^1} \begin{cases} \left (\ln {\left (
    \frac{e+t}{e+s}\right )}\right )^{\frac{1}{2}}, \quad  \mu
=2-\frac{n}{\sigma} -\frac{2\gamma}{\sigma}  \\
1, \quad \max \{ 2-\frac{n}{\sigma} -\frac{2\gamma}{\sigma}
; 0  \}  < \mu < 1
\end{cases} \,.
\]
If $\mu \geq 2-\frac{n}{\sigma}-\frac{2\gamma}{\sigma}$,
then we have in $Z_3$
\begin{eqnarray*}
    % \nonumber to remove numbering (before each equation)
    \|   \mathfrak{F}^{-1}(\chi_3(t, s,\xi)|\xi|^{\gamma}
\psi(t,s,\xi))\ast u_1\|_{L^2}  & \lesssim &  (1+t)^{1-\mu-\frac{n}{2\sigma}-\frac{\gamma}{\sigma}}(1+s)^{\mu} \|u_1\|_{L^1}\\
    & \lesssim & (1+t)^{-\frac{\mu}{2}}
    (1+s)^{\frac{\mu}{2}+1-\frac{n}{2\sigma}-\frac{\gamma}{\sigma}}
    \|u_1\|_{L^1}.
\end{eqnarray*}
The proof of (iii):
Suppose that $0 < \mu < \min
\{  2-\frac{n}{\sigma}-\frac{2\gamma}{\sigma}; 1 \}$. If $ \mu <  2-\frac{n}{\sigma}-\frac{2\gamma}{\sigma},$ then for $n < 2 \sigma$ we get
    \[ \||\xi|^{\gamma} u^{high}(t,s,\cdot)\|_{L^2} \leq C(1+t)^{-\frac{\mu}{2}}(1+s)^{\frac{\mu}{2}}\|u_1\|_{L^2} \leq C (1+t)^{1-\mu-\frac{n}{2\sigma}-\frac{\gamma}{\sigma}}(1+s)^{\mu-1+\frac{n}{2\sigma}+\frac{\gamma}{\sigma}} \|u_1\|_{L^2}, \quad q \geq 2.\] We obtain the following estimates:
in $ Z_2 \cup Z_3$
\[ \|   \mathfrak{F}^{-1}(\chi_2(t, s,\xi)\chi_3(t, s,\xi)|\xi|^{\gamma}
\psi(t,s,\xi))\ast u_1\|_{L^2}
\lesssim\,(1+t)^{1-\mu-\frac{n}{2\sigma}-\frac{\gamma}{\sigma}}
(1+s)^{\mu} \|u_1\|_{L^1}
\]
and in $Z_1$
\begin{eqnarray*}
    % \nonumber to remove numbering (before each equation)
    \|   \mathfrak{F}^{-1}(\chi(|\xi|)\chi_1(t, s,\xi)\chi_3(t, s,\xi)|\xi|^{\gamma}
    \psi(t,s,\xi))\ast u_1\|_{L^2} & \lesssim &  (1+t)^{-\frac{\mu}{2}} (1+s)^{\max\{ \frac{\mu}{2}, \frac{\mu}{2}+1-\frac{\gamma}{\sigma}\} }
    \|u_1\|_{L^2}\\
    & \lesssim & (1+t)^{1-\mu-\frac{n}{2\sigma}-\frac{\gamma}{\sigma}}(1+s)^{\max\{ \mu-1+\frac{n}{2\sigma}+ \frac{\gamma}{\sigma}, \mu + \frac{n}{2\sigma}\}} \|u_1\|_{L^2}.
\end{eqnarray*}

\end{proof}
\section{Proof of the Global existence results}\label{NL}
By Duhamel's principle, a function $u\in Z$, where~$Z$ is a suitable space, is a solution to~\eqref{eq:DPE} if, and only if, it satisfies the equality
\begin{equation}\label{eq:fixedpoint}
u(t,x) =  u^{\mathrm{lin}}(t,x) + \int_0^t K_1(t, s, x) \ast \,|u(s,x)|^p\, ds\,, \qquad \text{in~$Z$,}
\end{equation}
where $K_1(t, s, x)=\mathscr{F}^{-1}(\psi)(t, s,
x)$ and
\[ u^{\mathrm{lin}} (t,x) \doteq  K_1(t, 0, x) \ast u_1(x) \,, \]
is the solution to the linear Cauchy problem~\eqref{mainlineq} with
$s=0$. % \textcolor{green}{and $K_1(t, 0, x)$ is the fundamental solution of \eqref{mainlineq}, that is, the distributional solution
%with initial data $(u_0,u_1)=(0,\delta_0)$, $\delta_0$ denotes the
%Dirac measure at point $x=0$}.
%
The proof of our global existence results is based on the following scheme. We  define an appropriate data function space
\begin{align}
\label{dom:A}
\mathcal A  \doteq  L^2(\R^n)\cap L^{1}(\R^n),
\end{align}
 and an evolution space for solutions
\begin{align}
\label{eq:Xsp} Z(T) \doteq C([0, T], H^{\sigma}(\R^n)) \cap C^1([0,
T], L^2(\R^n))\cap L^{\infty}([0, \infty)\times\mathbf{R}^n),
\end{align}
equipped with a  norm relate to the estimates  of solutions to the linear problem \eqref{mainlineq} with $s=0$ such that
\begin{equation}\label{eq:ubasic}
\|u^{\mathrm{lin}}(t, \cdot)\|_{Z} \leq C\,\| u_1\|_{\mathcal A}.
\end{equation}

We define the operator~$F$ such that, for any~$u\in Z$,
\begin{equation}\label{eq:G}\nonumber
Fu(t,x) \doteq \int_0^t K_1(t, s,x)\ast |u(s,x)|^p\, ds\,,
\end{equation}
then we prove the estimates
\begin{align}
\label{eq:well}
\|Fu\|_{Z}
    & \leq C\|u\|_{Z}^p\,, \\
\label{eq:contraction}
\|Fu-Fv\|_{Z}
    & \leq C\|u-v\|_{Z} \bigl(\|u\|_{Z}^{p-1}+\|v\|_{Z}^{p-1}\bigr)\,.
\end{align}
By standard arguments, since $u^{\mathrm{lin}}$ satisfies~\eqref{eq:ubasic} and~$p>1$, from~\eqref{eq:well} it follows that~$u^{\mathrm{lin}}+F$ maps balls of~$Z$ into balls of~$Z$, and for small data in $\mathcal{A},$ from \eqref{eq:contraction} $F$ is a contraction. So, the estimates \eqref{eq:well}-\eqref{eq:contraction} lead to the existence of a unique solution to~\eqref{eq:fixedpoint}, that is, $u=u^{\mathrm{lin}}+Fu$, satisfying~\eqref{eq:ubasic}. We simultaneously gain a locally in time  for large data  and
 globally in time for small data existence result \cite{ER}.

\medskip

%The information that~$u\in Z$ plays a fundamental role to estimate~$f(u_t(s,\cdot))$ in suitable norms. We will employ the following well-known result (for instance, see \cite{DAE17NA}).
%%
%\begin{lemma}\label{lem:integral}
%Let~$\kappa\leq 1$. Then it holds
%%
%\[%\begin{equation}\label{eq:integralest}
%\int_0^t (1+t-s)^{-\kappa}\,\,(1+s)^{-\beta}\,ds \lesssim \left\{ \begin{array}{ll} (1+t)^{-\kappa} \hspace{2cm} \text{if } \beta >1\\ (1+t)^{-\kappa} \log(e+t)\hspace{0,5cm} \text{if } \beta =1.\end{array}\right.
%\]%\end{equation}
%%
%\end{lemma}
\begin{proof}(Theorem \ref{theoremN1})
We have to prove \eqref{eq:ubasic}, \eqref{eq:well} and \eqref{eq:contraction}, with $\mathcal{A}$ as in \eqref{dom:A} and $Z(T)$ as in \eqref{eq:Xsp}  equipped with the norm
\begin{align*}
\nonumber
\|u\|_{Z(T)}
   & \doteq \sup_{t\in[0,T]} \Bigl\{
    (1+t)^{\frac{n}{2\sigma}}\|u(t,\cdot)\|_{L^2} + (1+t)^{\frac{n}{ \sigma}\left(1-\frac1{q_0}\right)} \|u(t,\cdot)\|_{L^{q_0}}
    + (1+t)^{\min\left\{\frac{n}{\sigma}, \frac{\mu}{2 }\right\}}\|u(t,\cdot)\|_{L^{\infty}} \\
      &
         + (1+t)^{\min\{\frac{n}{2\sigma}+1, \frac{\mu}{2}\}} \Big(\|u_t(t,\cdot)\|_{L^{2}}
    +\|   u(t,\cdot)\|_{\dot{H}^\sigma}\Big) %+\textcolor{red}{(1+t)^{\frac{\mu}{2}}\|   u(t,\cdot)\|_{\dot{H}^{\frac{\sigma\mu-n}{2}}}}
     \Bigr\},
\end{align*}
where $q_0$ is defined as in \eqref{fujita}.\\
Thanks to Corollary  \ref{corolinear} and Theorem \ref{fractionalderivatives}, $u^{\mathrm{lin}} \in Z(T)$ and it satisfies \eqref{eq:ubasic}.

Let us prove \eqref{eq:well}. We omit the proof of~\eqref{eq:contraction}, since it is analogous to the proof of~\eqref{eq:well}.

Let $u\in Z(T)$. If $\mu > \max \{ \frac{2n}{\sigma} ; 1
\}$, by Theorem \ref{theoremlinear}, for $q\geq
2$ we have
 \begin{eqnarray*}
  \| Fu(t, \cdot)\|_{L^q}
 & \lesssim &\int_0^t (1+t)^{-\frac{n}{\sigma}\left(1-\frac1{q}\right) }(1+s)\left(\| |u(s, \cdot)|^p\|_{L^1} +(1+s)^{ \frac{n}{2\sigma}} \||u(s, \cdot)|^p\|_{L^2}\right) ds\\
 &\lesssim & (1+t)^{-\frac{n}{\sigma}\left(1-\frac1{q}\right) }\int_0^t \left( (1+s)^{1 - \frac{n}{\sigma}\left(1-\frac1{p}\right)p}+
 (1+s)^{1+\frac{n}{2\sigma} -\frac{n}{\sigma}\left(1-\frac1{2p}\right)p}  \right)ds\| u\|_{Z(T)}^p\\
 &\lesssim &(1+t)^{-\frac{n}{\sigma}\left(1-\frac1{q}\right) }\| u\|_{Z(T)}^p,\end{eqnarray*}
 for all $p>  1+
\frac{2\sigma}{n}$, that is, $ \frac{n}{\sigma}(p-1)-1>1$ and
\[ \frac{n}{\sigma}\left(p-\frac12\right)-1-\frac{n}{2\sigma}>1. \]

If $ \max \left\{ \frac{n}{\sigma}+ \frac{2n}{n+2\sigma} ; 1 \right\}<\mu <\frac{2n}{\sigma}$ and $p\leq  \frac{n}{2n-\sigma\mu}$, then
 %by Remark \ref{hypothesis} we have that
 $\mu\geq \frac{2n}{
\sigma}\left(1-\frac1{2p}\right) $, hence $L^q$ norm of $u$, with $2\leq q\leq q_0$ may be estimate as in the previous case, whereas
\begin{eqnarray*}
  \| Fu(t, \cdot)\|_{L^\infty}
 & \lesssim &\int_0^t (1+t)^{-\frac{\mu}{2} }(1+s)^{
1+\frac{\mu}{2}-\frac{n}{\sigma} }\left(\| |u(s, \cdot)|^p\|_{L^1} +(1+s)^{ \frac{n}{2\sigma}} \||u(s, \cdot)|^p\|_{L^2}\right) ds\\
  & \lesssim & (1+t)^{-\frac{\mu}{2} }\int_0^t(1+s)^{
1+\frac{\mu}{2}-\frac{n}{\sigma} } \left((1+s)^{ - \frac{n}{\sigma}\left(1-\frac1{p}\right)p}+ (1+s)^{\frac{n}{2\sigma} -\frac{n}{\sigma}\left(1-\frac1{2p}\right)p}  \right) ds\| u\|_{Z(T)}^p
 \\
 &\lesssim & (1+t)^{-\frac{\mu}{2}}\int_0^t (1+s)^{
    1+\frac{\mu}{2}-\frac{np}{\sigma}} ds\| u\|_{Z(T)}^p
 \lesssim (1+t)^{-\frac{\mu}{2}}\| u\|_{Z(T)}^p,
 \end{eqnarray*}
for all $p>  1+ \frac{2\sigma}{n}>\frac{\mu\sigma}{2n}+
\frac{2\sigma}{n}$.

 Finally, if $\mu>  \max \left\{ \frac{n}{\sigma}+ \frac{2n}{n+2\sigma} ; 1 \right\}$, by Theorem
\ref{fractionalderivatives}   we have
%for $\gamma\in (0, \sigma]$
\begin{eqnarray*}
  \|  Fu(t, \cdot)\|_{\dot{H}^\sigma}
 & \lesssim & (1+t)^{-\min\left\{\frac{n}{2\sigma}+1, \frac{\mu}{2 }\right\}}\int_0^t (1+s)\left(\| |u(s, \cdot)|^p\|_{L^1} +(1+s)^{ \frac{n}{2\sigma}} \||u(s, \cdot)|^p\|_{L^2}\right) ds\\
 &\lesssim & (1+t)^{-\min\left\{\frac{n}{2\sigma}+1, \frac{\mu}{2 }\right\}}\int_0^t \left((1+s)^{1 - \frac{n}{\sigma}\left(1-\frac1{p}\right)p}+(1+s)^{1+\frac{n}{2\sigma}-\frac{n(2p-1)}{2\sigma}}  \right) ds\| u\|_{Z(T)}^p\\
 &\lesssim &(1+t)^{-\min\left\{\frac{n}{2\sigma}+1, \frac{\mu}{2 }\right\}}\| u\|_{Z(T)}^p,\end{eqnarray*}
 and
 \[ \| \partial_t Fu(t, \cdot)\|_{L^2}\lesssim (1+t)^{-\min\left\{\frac{n}{2\sigma}+1, \frac{\mu}{2 }\right\}}\| u\|_{Z(T)}^p,\]
 for all $p>1+ \frac{2\sigma}{n}$.
 \end{proof}
\begin{proof}(Theorem \ref{theoremNL2})
We have to prove \eqref{eq:ubasic}, \eqref{eq:well} and \eqref{eq:contraction}, with $\mathcal{A}$ as in \eqref{dom:A} and $Z(T)$ as in \eqref{eq:Xsp}  equipped with the norm
\begin{align*}
\nonumber
\|u\|_{Z(T)}
   & \doteq \sup_{t\in[0,T]} \Bigl\{
    (1+t)^{ \frac{n}{2\sigma}+\mu-1} \|u(t,\cdot)\|_{L^2} + (1+t)^{\frac{n}{\sigma}\left(1-\frac1{q_1}\right)-1+\mu}
    \|u(t,\cdot)\|_{L^{q_1}} +(1+t)^{\min\left\{\frac{n}{\sigma}+\mu-1, \frac{\mu}{2}\right\}}||  u (t, \cdot) ||_{L^\infty}
     \\
      &
         + (1+t)^{ \frac{\mu}{2 }} \Big(\|u_t(t,\cdot)\|_{L^{2}}
    +\|(- \Delta)^{\frac{\sigma}{2}}  u(t,\cdot)\|_{L^2} %\textcolor{red}{(1+t)^{\frac{\mu}{2}}\|   u(t,\cdot)\|_{\dot{H}^{\sigma-\frac{n}{2}-\frac{\sigma\mu}{2}}}}
    \Big)\Bigr\}.
\end{align*}
Thanks to Corollary \ref{corolinear} and Theorem
\ref{fractionalderivatives}, $u^{\mathrm{lin}} \in Z(T)$ and it
satisfies \eqref{eq:ubasic}.

Let us prove \eqref{eq:well}. We omit the proof of~\eqref{eq:contraction}, since it is analogous to the proof of~\eqref{eq:well}.

Let $u\in Z(T)$. If $ 1-\frac{n}{\sigma} < \mu <
\min \{ 2-\frac{2n}{\sigma} ; 1 \} $ by Theorem \ref{theoremlinear}
for $q \geq 2$  we have
\begin{eqnarray*}
  \| Fu(t, \cdot)\|_{L^q}
 & \lesssim &\int_0^t (1+t)^{-\frac{n}{\sigma}\left(1-\frac1{q}\right)+1-\mu}(1+s)^{\mu}\left(\| |u(s, \cdot)|^p\|_{L^1}+
(1+s)^{\frac{n}{2\sigma}} \| |u(s, \cdot)|^p\|_{L^2} \right) ds\\
 &\lesssim & (1+t)^{-\frac{n}{\sigma}\left(1-\frac1{q}\right)+1-\mu}
 \int_0^t\left( (1+s)^{\mu - \frac{n}{\sigma}\left(p-1\right)+(1-\mu)p}+(1+s)^{\mu+\frac{n}{2\sigma}- \frac{n}{\sigma}\left(p-\frac12\right)+(1-\mu)p}  \right)ds
 \| u\|_{Z(T)}^p\\
 &\lesssim &(1+t)^{-\frac{n}{\sigma}\left(1-\frac1{q}\right)+1-\mu}\| u\|_{Z(T)}^p,\end{eqnarray*}
 for all $ p> \frac{n+\sigma +\sigma\mu}{n-\sigma+\sigma\mu}$.\\

If $2-\frac{2n}{\sigma} <\mu< \min \left\{ \mu_{\sharp} ; 1 \right\}$ and $2p\leq q_1$,
then $L^q$ norm of $u$, with $2\leq q\leq q_1$ may be estimate as in
the previous case, whereas
\begin{eqnarray*}
  \| Fu(t, \cdot)\|_{L^\infty}
 & \lesssim &\int_0^t (1+t)^{-\frac{\mu}{2} }(1+s)^{
1+\frac{\mu}{2}-\frac{n}{\sigma} }\left(\| |u(s, \cdot)|^p\|_{L^1} +(1+s)^{ \frac{n}{2\sigma}} \||u(s, \cdot)|^p\|_{L^2}\right) ds\\
  & \lesssim & (1+t)^{-\frac{\mu}{2} }\int_0^t(1+s)^{
1+\frac{\mu}{2}-\frac{n}{\sigma} } \left( (1+s)^{ -
\frac{n}{\sigma}\left(1-\frac1{p}\right)p + (1-\mu)p}+
(1+s)^{\frac{n}{2\sigma}
-\frac{n}{\sigma}\left(1-\frac1{2p}\right)p+ (1-\mu)p}  \right) ds\|
u\|_{Z(T)}^p
 \\
 &\lesssim & (1+t)^{-\frac{\mu}{2}}\int_0^t (1+s)^{
    1+\frac{\mu}{2}-\frac{np}{\sigma}+(1-\mu)p} ds\| u\|_{Z(T)}^p
 \lesssim (1+t)^{-\frac{\mu}{2}}\| u\|_{Z(T)}^p,
 \end{eqnarray*}
for all $ p> \frac{n+\sigma +\sigma\mu}{n-\sigma+\sigma\mu} > \frac{2\sigma+\sigma\mu/2}{n-\sigma+\sigma\mu} $.\\
Now, if $p\leq q_1< 2p$ we use the interpolation
\begin{equation}\label{int}
\| u\|_{L^{2p}}\leq \| u\|_{L^{q_1}}^{\theta}\|
u\|_{L^\infty}^{1-\theta}, \quad  \theta=q_1/2p.
\end{equation}
By Theorem \ref{theoremlinear}(iii) for $2\leq q\leq q_1$ we have
\begin{eqnarray*}
  & \| Fu(t, \cdot)\|_{L^q}
  \lesssim \displaystyle\int_0^t (1+t)^{1-\mu-\frac{n}{\sigma}\left(1-\frac1{q}\right) }
 (1+s)^{\mu}\left(\| |u(s, \cdot)|^p\|_{L^1} +(1+s)^{ \frac{n}{\sigma}\left(1-\frac1{q}\right)-1} \||u(s, \cdot)|^p\|_{L^2}\right) ds\\
 &\lesssim (1+t)^{-1+\mu+\frac{n}{\sigma}\left(1-\frac1{q}\right)
} \times \\
 &\displaystyle\int_0^t \left( (1+s)^{\mu - \frac{n}{\sigma}\left(1-\frac1{p}\right)p+(1-\mu)p}+
 (1+s)^{\mu-1+\frac{n}{\sigma}\left(1-\frac1{q}\right)-\frac{n(q_1-1)}{2\sigma}+\frac{(1-\mu)q_1}{2}
 -\frac{\mu}{2}\left(p-\frac{q_1}{2}\right)}  \right)ds\| u\|_{Z(T)}^p\\
&\lesssim
(1+t)^{-1+\mu+\frac{n}{\sigma}\left(1-\frac1{q}\right)
} \| u\|_{Z(T)}^p,
\end{eqnarray*}
 for all $p> p_K(n+\sigma\mu) > 1+ \frac{2}{\mu} $, thanks to
 \[-\frac{n(q_1-1)}{2\sigma}+\frac{(1-\mu)q_1}{2}
 +\frac{q_1\mu}{4}=0\]
 and
 \[\mu-1+\frac{n}{\sigma}\left(1-\frac1{q}\right)-\frac{p\mu}{2}\leq \mu-1+\frac{2-\mu}{2}-\frac{p\mu}{2}=\frac{\mu(1-p)}{2}<-1.\]
To estimate the $\| Fu(t, \cdot)\|_{L^\infty}$ for  $\mu>2-\frac{2n}{\sigma}$ and $ q_1< 2p$, one may use Theorem \ref{theoremlinear}(ii) and apply again \eqref{int}, namely
\begin{eqnarray*}
  \| Fu(t, \cdot)\|_{L^\infty}
 & \lesssim &\int_0^t (1+t)^{-\frac{\mu}{2} }(1+s)^{
\frac{\mu}{2} }\left((1+s)^{
1-\frac{n}{\sigma} }\| |u(s, \cdot)|^p\|_{L^1} + \||u(s, \cdot)|^p\|_{L^2}\right) ds\\
  & \lesssim & (1+t)^{-\frac{\mu}{2}}\| u\|_{Z(T)}^p,
 \end{eqnarray*}
 for all $p> p_K(n+\sigma\mu) > 1+ \frac{2}{\mu} $, thanks to
 \[\frac{\mu}{2}+1-\frac{n}{\sigma} <\mu.\]

Finally, if $1-\frac{n}{\sigma}<\mu < \min \left\{ \mu_{\sharp} ; 1 \right\}$, by Theorem
\ref{fractionalderivatives}   we have
\begin{eqnarray*}
  \|  Fu(t, \cdot)\|_{\dot{H}^\sigma}
 & \lesssim & (1+t)^{- \frac{\mu}{2 }}\int_0^t (1+s)^{\frac{\mu}{2 }- \frac{n}{2\sigma}}\left(\| |u(s, \cdot)|^p\|_{L^1} +(1+s)^{ \frac{n}{2\sigma}} \||u(s, \cdot)|^p\|_{L^2}\right) ds\\
 &\lesssim &(1+t)^{- \frac{\mu}{2 }}\| u\|_{Z(T)}^p,\end{eqnarray*}
  and
 \[ \| \partial_t Fu(t, \cdot)\|_{L^2}\lesssim (1+t)^{- \frac{\mu}{2 }}\| u\|_{Z(T)}^p,\]
 for all $p>p_K(n+\sigma\mu)$.
% \textcolor{red}{ Similarly,  if $2p\leq q_1$ we have
%\begin{eqnarray*}
%\|   u(t,\cdot)\|_{\dot{H}^{\sigma-\frac{n}{2}-\frac{\sigma\mu}{2}}}& \lesssim &
%(1+t)^{-\frac{\mu}{2}}\int_0^t (1+s)^{\mu}\left(\| |u(s, \cdot)|^p\|_{L^1} +(1+s)^{ \frac{n}{2\sigma}} \||u(s, \cdot)|^p\|_{L^2}\right) ds\\
% &\lesssim &(1+t)^{- \frac{\mu}{2 }}\| u\|_{Z(T)}^p,
%  \end{eqnarray*}
%  for all $p>p_K(n+\sigma\mu)$, but if $q_1<2p$
%  \begin{eqnarray*}
%\|   u(t,\cdot)\|_{\dot{H}^{\sigma-\frac{n}{2}-\frac{\sigma\mu}{2}}}& \lesssim &
%(1+t)^{-\frac{\mu}{2}}\int_0^t (1+s)^{\mu}\left(\| |u(s, \cdot)|^p\|_{L^1} +(1+s)^{ \frac{n}{2\sigma}} \||u(s, \cdot)|^p\|_{L^2}\right) ds\\
% &\lesssim &(1+t)^{- \frac{\mu}{2 }}\| u\|_{Z(T)}^p + \cdots,
%  \end{eqnarray*}
% }

\end{proof}

\section{Proof of the Non existence result via test function method}\label{test}

\begin{proof}(Proposition \ref{testfuntion})
Let us multiply \eqref{eq:DPE} by  the function $g(t)=g(0)(1+t)^{\mu}$, with $g(0)>0$, so that
\begin{equation}\label{modified}
(gu)_{tt}+ (- \Delta)^\sigma(gu) -(g'u)_t = g(t)|u|^p.
\end{equation}

We fix a nonnegative, non-increasing, test function~$\varphi\in\mathcal{C}_c^\infty([0,\infty))$ with~$\varphi=1$ in~$[0,1/2]$ and~$\supp\varphi\subset[0,1]$, and a  nonnegative, radial, test function~$\psi\in\mathcal{C}_c^\infty(\R^n)$, such that~$\psi=1$ in the ball~$B_{1/2}$, and~$\supp\psi\subset B_1$. We also assume~$\psi(x)\leq\psi(y)$ when~$|x|\geq|y|$. Here~$B_r$ denotes the ball of radius~$r$, centered at the origin. We may assume  that
\begin{equation}\label{eq:testbnd}
\varphi^{-\frac{p'}{p}}\,\bigl(|\varphi'|^{p'}+|\varphi''|^{p'}\bigr),
\qquad\psi^{-\frac{p'}{p}} \bigl( |\psi|^{p'}+|\Delta^\sigma
\psi|^{p'}\bigr), \qquad \text{are bounded,}
\end{equation}
where~$p'=p/(p-1)$. We remark that the assumption that~$\sigma$
is integer plays a fundamental role
here. Then, for~$R\geq1$, we define:
\begin{equation}\label{eq:testR}
\varphi_R(t) = \varphi(R^{-\sigma}t), \quad \psi_R(x)=\psi(R^{-1}x).
\end{equation}
Let us assume that~$u$ is a (global or local) weak solution
to~\eqref{modified}. Let~$R>0$, and also assume that~$R\leq
T^\sigma$, if~$u$ is a local solution in~$[0,T]\times\R^n$.
Integrating by parts, and recalling that~$u(0,x)\equiv0$
and~$\varphi_R(0)=1$, we obtain
\begin{equation}\label{eq:test0}
\int_0^\infty \int_{\R^n} u
\bigl(g\varphi_R''\psi_R+g'\varphi_R'\psi_R+g\varphi_R(-\Delta)^\sigma\psi_R\bigr)\,dxdt
-g(0)\int_{\R^n} u_1(x)\,\psi_R(x)\,dx = I_R,
\end{equation}
where:
\[ I_R=\int_0^\infty \int_{\R^n} g(t)|u|^p\varphi_R\psi_R\,dxdt. \]
We may now apply Young inequality to estimate:
\begin{multline*}
\int_0^\infty \int_{\R^n} |u| \bigl(g|\varphi_R''|\,\psi_R+ g'|\varphi_R'|\,| \psi_R|+g\varphi_R\,|(-\Delta)^\sigma\psi_R|\bigr)\,dxdt
    \leq \frac1p\,I_R\\
    +\frac1{p'}\,\int_0^\infty \int_{\R^n} (\varphi_R\psi_R)^{-\frac{p'}{p}}g(t)\bigl(|\varphi_R''\psi_R|^{p'}+|\varphi_R(-\Delta)^\sigma \psi_R|^{p'} \bigr)
    + (g\varphi_R\psi_R)^{-\frac{p'}{p}}|g'\varphi_R'
    \psi_R|^{p'}\,dxdt.
\end{multline*}
Due to
\begin{gather*}
\varphi_R'(t) = R^{-\sigma}(\varphi')(R^{-\sigma}t), \qquad \varphi_R''(t) = R^{-2\sigma}(\varphi'')(R^{-\sigma}t),\\
(-\Delta)^\sigma\psi_R(x)=R^{-2\sigma}\bigl((-\Delta)^\sigma\psi\bigr)(R^{-1}x),
\end{gather*}
recalling~\eqref{eq:testbnd}, we may estimate
\begin{align*}
\int_0^\infty \int_{\R^n} g(t)(\varphi_R\psi_R)^{-\frac{p'}{p}} |\varphi_R''\psi_R|^{p'}\,dxdt
    & \leq C\,R^{-2\sigma p'+n+(1+\mu)\sigma},\\
\int_0^\infty \int_{\R^n} (g\varphi_R\psi_R)^{-\frac{p'}{p}} |g'\varphi_R' \psi_R|^{p'}\,dxdt
    & \leq C\,R^{-2\sigma p'+n+(1+\mu)\sigma},\\
\int_0^\infty \int_{\R^n} g(t)(\varphi_R\psi_R)^{-\frac{p'}{p}} |\varphi_R(-\Delta)^\sigma \psi_R|^{p'}\,dxdt
    & \leq C\,R^{-2\sigma p'+n +(1+\mu)\sigma}.
\end{align*}
Summarizing, we proved that
\[ \frac1{p'}\,I_R\leq C\,R^{-2\sigma p'+n +(1+\mu)\sigma} - g(0) \int_{\R^n} u_1(x)\,\psi_R(x)\,dx .\]
Assume, by contradiction, that the solution~$u$ is global
(in time). Recalling assumption~\eqref{blowup},
in the subcritical case~$p<\frac{n+\sigma
+\sigma\mu}{[n-\sigma+\sigma\mu]_+}$, it follows that~$I_R<0$, for
any sufficiently large~$R$, and this contradicts the fact
that~$I_R\geq0$. The critical
case~$p=\frac{n+\sigma
+\sigma\mu}{[n-\sigma+\sigma\mu]_+}$ is treated in standard way,
but we omit the details for the sake of brevity. Therefore, $u$
cannot be a global solution (in time).
\end{proof}

\section*{Appendix}\label{sec:Appendix}

In this section we  include notations,
 well known results of Harmonic Analysis
and properties of special functions used throughout the paper.

\begin{notation}
By~$[x]_+$ we denote the non-negative
part of~$x\in \mathbf{R}$, i.e. $[x]_+=\max\{x,0\}$.
\end{notation}

\begin{notation}
We write $ f\lesssim g$ if there exists a
constant $C>0$ such that $f\leq C g$, and $f\approx g$ if $g\lesssim f \lesssim g $.
%On the other hand, we write $f\sim g$ when the asymptotic profile of~$f$ is described by~$g$, in an appropriate sense (for instance, a pointwise estimate as~$\xii\to0$ or an estimate in a functional space as~$t\to\infty$).
\end{notation}
\begin{notation}
We denote by~$\hat f=\mathfrak{F} f$ or~$\hat
f(t,\cdot)=\mathfrak{F} f(t,\cdot)$ the partial
Fourier transform, with respect to the space variable~$x$, of a
tempered distribution $S'(\mathbb{R}^n)$ or of a function, in the appropriate
distributional or functional sense and its inverse  transform
by~$\mathfrak{F}^{-1}$.
\end{notation}
\begin{notation}
By~$L^p=L^p(\R^n)$, $p\in[1,\infty]$, we denote the space of
measurable functions~$f$ such that~$|f|^p$ has finite integral
over~$\R^n$, if~$p\in[1,\infty)$, or has finite essential supremum
over~$\R^n$ if~$p=\infty$. We denote by~$W^{m,p}$, $m\in\N$, the
space of~$L^p$ functions with weak derivatives up to the~$m$-th
order in~$L^p$. We denote by~${H}^s(\R^n)$ and $\dot{H}^s(\R^n)$,
$s\geq0$, the spaces of tempered distributions $S'(\mathbb{R}^n)$
with~$(1+\xii^2)^{\frac{s}2}\,\hat u \in L^2$ and $\xii^{s}\,\hat u \in L^2$, respectively.
\end{notation}
\begin{notation}\label{DefLpqspaces}
By $L_p^q=L_p^q(\mathbb{R}^n)$ we denote the space of tempered distributions $T\in \mathcal S'(\mathbb{R}^n)$ such that~$T\ast f\in L^q$ for any~$f\in \mathcal S$, and
\[ \| T \ast f\|_{L^q} \leq C \|f\|_{L^p} \]
for all $f\in\mathcal S$ with a constant $C$, which is independent of $f$. In this case, the operator~$T\ast$ is extended by density from~$\mathcal S$ to~$L^p$.

By $M_p^q=M_p^q(\mathbb{R}^n)$, $p\leq q$, we
denote the set of Fourier transforms $\hat{T}$ of distributions $T
\in L_p^q$,
equipped with the norm % REMARK: f in S implies that m\hat f is a tempered distribution defined by its action \<m\hat f, \varphi\>=\<m,\hat f\varphi\>
\[ \|m\|_{M_p^q}:=\sup\big\{\|\mathfrak{F}^{-1}(m\mathfrak{F}(f))\|_{L^q}:f\in \mathcal{S}, \|f\|_{L^p}=1\big\},\]
and we set~$M_p=M_p^p$. A function~$m$ in $M_p^q$ is called a multiplier of type $(p,q)$. %REMARK: NOT ALL ELEMENTS OF M_p^q ARE FUNCTIONS. IF THEY ARE, THEY ARE CALLED MULTIPLIERS
\end{notation}
Now, let us introduce the Besov spaces (see~\cite{Tr}).
\begin{notation}\label{not:Besov}
We fix a nonnegative function $\psi\in\mathcal C^\infty$, having compact support in $\{ \xi \in \mathbb{R}^n : 2^{-1}\leq \xii\leq 2\}$, such that:
\begin{equation}\label{eq:partition}
\sum_{k=-\infty}^{+\infty} \psi_k(\xi)=1, \qquad \text{where~$\psi_k(\xi):=\psi(2^{-k} \xi)$.}
\end{equation}
(This property is easily obtained
if~$\psi(\xi)=\varphi(\xi/2)-\varphi(\xi)$, for
some~$\varphi\in\mathcal C^\infty$, with~$\varphi(\xi)=1$
for~$\xii\leq1/2$ and~$\varphi(\xi)=0$
if~$\xii\geq1$). For any~$p\in[1,\infty]$, we
define the Besov space
\[ B^0_{p,2} = \{ f\in\mathcal S': \ \forall k\in\Z, \ \mathfrak{F}^{-1}(\psi_k\hat f)\in L^p, \quad \|f\|_{B^0_{p,2}}<\infty \}, \]
where
\[ \|f\|_{B^0_{p,2}} = \|\mathfrak{F}^{-1}(\psi_k\hat f)\|_{\ell^2(L^p)} = \left(\sum_{k=-\infty}^{+\infty} \|\mathfrak{F}^{-1}(\psi_k\hat f)\|_{L^p}^2\right)^{\frac12}. \]
\end{notation}

We are interested in obtain
$L^p-L^q$ estimates to the solutions of the Cauchy problem
\eqref{mainlineq}. For this purpose it is used the following results about multipliers and special functions:
\begin{lemma}[Littman's Lemma] \label{ThmLittmanlemmapecher}
Suppose that the function $v = v(\eta) \in C_0^\infty$ with support
in $\displaystyle \{ \eta \in \mathbb{R}^n; \frac12 \leq |\eta| \leq
2 \}$ and the function $\omega = \omega(\eta) \in C^\infty$ in a
neighborhood of the support of $v$. Assume $\tau_0$ a large positive
number and the rank of the Hessian $H_\omega(\eta)$ satisfies rank
$H_\omega(\eta)\geq k$ on the support of $ v$. Then there exists an integer number $L$, such that for all
$\tau \geq \tau_0$ holds
    $$  \left\| \mathfrak{F}^{-1}_{\eta \rightarrow x} \left( e^{-i\tau \omega(\eta)} v(\eta) \right)   \right\|_{L^\infty(\mathbb{R}^n_x)} \lesssim (1+\tau)^{-\frac{k}{2}}\sum_{|\alpha|\leq L} \|D_{\eta}^\alpha v(\eta) \|_{L^\infty(\mathbb{R}^n_x)}. $$
    \end{lemma}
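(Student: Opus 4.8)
The plan is to prove this by the method of stationary phase, first reducing the large parameter to $\tau$ and then obtaining a bound uniform in $x$. Writing the inverse Fourier transform explicitly (up to a normalising constant), the quantity to estimate is
$$ I(x,\tau)=\int_{\R^n} e^{i(x\cdot\eta-\tau\omega(\eta))}v(\eta)\,d\eta. $$
Setting $\xi\doteq x/\tau$ and factoring the parameter out of the phase, this equals $\int e^{i\tau\Phi_\xi(\eta)}v(\eta)\,d\eta$ with $\Phi_\xi(\eta)\doteq\xi\cdot\eta-\omega(\eta)$. The purpose of this reduction is that the desired bound, uniform in $x\in\R^n$, becomes a bound for the oscillatory integral uniform in the auxiliary parameter $\xi\in\R^n$. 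Crucially, the Hessian of $\Phi_\xi$ in $\eta$ equals $-H_\omega(\eta)$ and is therefore independent of $\xi$; this independence is what will ultimately force uniformity in $\xi$.

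Next I would localise. Since $\supp v$ is a compact subset of the annulus on which $\mathrm{rank}\,H_\omega\geq k$, at each point some $k\times k$ minor of $H_\omega$ is nonzero, hence, by continuity, bounded away from zero on a neighbourhood. I would cover $\supp v$ by finitely many such neighbourhoods — their number depending only on $\omega$, $k$, $n$ and the annulus — and fix a smooth partition of unity subordinate to the cover. On each piece, after a fixed linear change of coordinates $\eta=(y,z)$ with $y\in\R^k$, $z\in\R^{n-k}$, the block $\partial_y^2\omega$ is invertible with $|\det\partial_y^2\omega|\geq c>0$ uniformly.

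On a single piece I would freeze the $n-k$ ``bad'' variables $z$ and treat the integral in the $k$ ``good'' variables $y$. Writing $\xi=(\xi',\xi'')$ and $\Phi_\xi(y,z)=\xi'\cdot y+\xi''\cdot z-\omega(y,z)$, the factor $e^{i\tau\xi''\cdot z}$ has modulus one and the $y$-phase $g_z(y)\doteq\xi'\cdot y-\omega(y,z)$ has Hessian $-\partial_y^2\omega$, nondegenerate with determinant bounded below uniformly in $(\xi',z)$. Here I would invoke the uniform nondegenerate stationary phase estimate: if a real phase $g$ on $\R^k$ satisfies $|\det\partial^2 g|\geq c>0$ on the support of an amplitude $w$, then
$$ \Bigl|\int_{\R^k}e^{i\tau g(y)}w(y)\,dy\Bigr|\lesssim \tau^{-k/2}\sum_{|\beta|\leq L'}\|D_y^\beta w\|_{L^\infty}, $$
with a constant depending only on $c$, $k$ and bounds for finitely many derivatives of $g$. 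Since adding the linear term $\xi'\cdot y$ leaves the Hessian unchanged, the hypothesis, and hence the constant, are uniform in $\xi'$ and in $z$. Applying this with $w$ equal to $v$ times the (fixed, smooth, compactly supported) cut-off of the partition of unity — whose derivatives are absorbed, so that $\|D_y^\beta w\|_{L^\infty}\lesssim\sum_{|\alpha|\leq|\beta|}\|D^\alpha v\|_{L^\infty}$ — yields for each fixed $z$ a bound $\lesssim\tau^{-k/2}\sum_{|\alpha|\leq L}\|D^\alpha v\|_{L^\infty}$, uniformly in $\xi'$ and $z$. Integrating over $z$ in the bounded support and summing the finitely many pieces gives the claim, with $(1+\tau)^{-k/2}\approx\tau^{-k/2}$ for $\tau\geq\tau_0$.

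The main obstacle is the uniform nondegenerate stationary phase estimate itself, with right-hand side controlled by finitely many sup-norms of the amplitude and a constant independent of the linear term $\xi'$. Its proof must handle two regimes simultaneously: when the critical point $\nabla g=0$, i.e.\ $\xi'=\nabla_y\omega(y,z)$, lies \emph{outside} $\supp w$, one integrates by parts with $L=(i\tau|\nabla g|^2)^{-1}\,\nabla g\cdot\nabla$ to gain arbitrary powers of $\tau$; when it lies \emph{inside}, the Morse lemma normalises $g$ to $\tfrac12\sum_j\pm y_j^2$ and the integral factorises into one-dimensional Fresnel integrals, each contributing $\tau^{-1/2}$. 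Making the constant uniform across these regimes — in particular as the critical point crosses the boundary of $\supp w$ when $\xi'$ varies — is the technical crux, and is resolved by a dyadic scaling argument; by contrast, the reduction, the localisation, and the final integration in $z$ are routine.
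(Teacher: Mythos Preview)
The paper does not supply its own proof of this lemma: it is stated in the Appendix as a known result, and the reader is referred to Proposition~2.5 of Stein--Shakarchi, \emph{Functional Analysis}, for a proof (with the remark that the constants remain uniform when $\omega$ and $v$ carry an additional parameter, provided $|\det H_\omega|\geq c>0$ uniformly). Your sketch is correct and is precisely the standard stationary-phase argument one finds in that reference: write the inverse transform as $\int e^{i\tau(\xi\cdot\eta-\omega(\eta))}v(\eta)\,d\eta$ with $\xi=x/\tau$, localise by a partition of unity so that on each piece a fixed $k\times k$ minor of $H_\omega$ is bounded away from zero, split the variables into $k$ ``nondegenerate'' ones and $n-k$ frozen ones, apply the uniform stationary-phase bound in the former, and integrate trivially over the compact range of the latter. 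There is therefore nothing to compare --- your outline \emph{is} the proof the paper delegates to the literature, and you have correctly identified that the only genuinely nontrivial ingredient is the uniformity of the $k$-dimensional stationary-phase estimate as the linear term $\xi'\cdot y$ (and hence the location of the critical point) varies.
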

In  Proposition 2.5 of \cite{SS} one can find a simple proof of Lemma \ref{ThmLittmanlemmapecher}, from which it is easy to check that the statement remains valid whenever~$\omega$ and~$v$ depend on some parameter~$t$, provided that~$|\det H_\omega(t,\eta)|\geq c>0$, with~$c$ uniform with respect to~$t$.

In \cite{Sj} one can find the following result:
 %(that)\textcolor{green}{which} is  useful tool to derive $L^q-L^q$
%estimates.
\begin{proposition}[Berstein's inequality]\label{sigrid1}
Let $n\geq 1$ and $N>\frac{n}{2}$. If $f \in H^{N}$, then $\mathscr{F}^{-1}m\in L^1$ and there exists a constant $C>0$ such that
\[ \|\mathscr{F}^{-1}m\|_{L_1}\leq C \|f\|_{L^2}^{1-\frac{n}{2N}}\|D^Nf\|_{L^2}^{\frac{n}{2N}}.
\]
\end{proposition}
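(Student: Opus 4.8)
The plan is to prove the stated estimate by a weighted Cauchy--Schwarz argument combined with a scaling optimization, the latter being what converts a crude additive bound into the sharp multiplicative (interpolation) form. Throughout I read the statement with $f=m$, so that the left-hand side is $\|\mathscr{F}^{-1}m\|_{L^1}$ and the right-hand side involves $\|m\|_{L^2}$ and $\|D^Nm\|_{L^2}$, with $D^N=(-\Delta_\xi)^{N/2}$ when $N$ is not an integer. Set $g\doteq\mathscr{F}^{-1}m$, so that $\hat g=m$; the goal is to bound $\|g\|_{L^1}$.

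First I would insert the weight $\langle x\rangle\doteq(1+|x|^2)^{1/2}$ together with a free dilation parameter $\lambda>0$ and apply the Cauchy--Schwarz inequality,
\[
\|g\|_{L^1}=\int_{\R^n}\langle \lambda x\rangle^{-N}\,\langle \lambda x\rangle^{N}|g(x)|\,dx
\leq\Big(\int_{\R^n}\langle \lambda x\rangle^{-2N}\,dx\Big)^{\frac12}\Big(\int_{\R^n}\langle \lambda x\rangle^{2N}|g(x)|^2\,dx\Big)^{\frac12}.
\]
The hypothesis $N>\frac n2$ is used precisely to guarantee that the first integral converges; the substitution $y=\lambda x$ gives $\int_{\R^n}\langle \lambda x\rangle^{-2N}\,dx=c_N\,\lambda^{-n}$ with $c_N=\int_{\R^n}\langle y\rangle^{-2N}\,dy<\infty$. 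For the second factor I would use the elementary inequality $\langle \lambda x\rangle^{2N}\lesssim 1+\lambda^{2N}|x|^{2N}$ (valid for every $N>0$ by subadditivity of $t\mapsto t^N$ when $N\leq1$ and by convexity when $N\geq1$), together with Plancherel's identity in the two forms $\|g\|_{L^2}=\|m\|_{L^2}$ and $\||x|^Ng\|_{L^2}=\|D^Nm\|_{L^2}$, the latter because multiplication by $|x|^N$ on the physical side corresponds to $(-\Delta_\xi)^{N/2}$ on the Fourier side. This yields
\[
\|g\|_{L^1}\lesssim \lambda^{-\frac n2}\Big(\|m\|_{L^2}^2+\lambda^{2N}\|D^Nm\|_{L^2}^2\Big)^{\frac12}
\lesssim \lambda^{-\frac n2}\|m\|_{L^2}+\lambda^{N-\frac n2}\|D^Nm\|_{L^2}.
\]

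It remains to optimize in $\lambda$. Writing $A\doteq\|m\|_{L^2}$ and $B\doteq\|D^Nm\|_{L^2}$, the right-hand side is $\lambda^{-n/2}A+\lambda^{N-n/2}B$; since $N>\frac n2$, both exponents $-\frac n2<0$ and $N-\frac n2>0$ have the right sign for a genuine interior minimum, attained at $\lambda\approx(A/B)^{1/N}$. Substituting this value balances the two terms and gives $\|g\|_{L^1}\lesssim A^{1-\frac n{2N}}B^{\frac n{2N}}$, which is exactly the claimed bound; the same computation shows $g\in L^1$ whenever $m\in H^N$.

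The argument is almost entirely routine, and the one point I would regard as the crux is the scaling step: it is the insertion of the parameter $\lambda$ and its optimization, rather than a single application of Cauchy--Schwarz, that upgrades the crude additive estimate $\|g\|_{L^1}\lesssim\|m\|_{L^2}+\|D^Nm\|_{L^2}$ into the homogeneous, scale-invariant interpolation inequality with the precise exponents $1-\frac n{2N}$ and $\frac n{2N}$. A secondary technical point is the rigorous justification of $\||x|^Ng\|_{L^2}=\|D^Nm\|_{L^2}$ for non-integer $N$, which follows from the Fourier characterization of the homogeneous fractional derivative together with a density argument on $\mathcal S$.
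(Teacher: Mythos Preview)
Your argument is correct and is the standard proof of Bernstein's inequality: weighted Cauchy--Schwarz with the weight $\langle\lambda x\rangle^{\pm N}$, Plancherel to convert $\||x|^Ng\|_{L^2}$ into $\|D^Nm\|_{L^2}$, and scaling optimization in $\lambda$ to pass from the additive bound to the multiplicative one with the right exponents. Your reading $f=m$ is also the only sensible interpretation of the (slightly garbled) statement.

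There is, however, nothing to compare against: the paper does not supply a proof of this proposition at all. It is stated in the Appendix as a quoted result, introduced by ``In \cite{Sj} one can find the following result'', and is used as a black box in the high-frequency analysis of Section~\ref{lplqestimates}. So your proof is not an alternative to the paper's proof but rather fills in what the paper deliberately omits; the approach you give is essentially the one found in the cited reference and in standard harmonic-analysis texts.
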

    In \cite{Bateman} one can find  the following  properties for Bessel and   Hankel functions:
     \begin{lemma}\label{hankel}
       The function
     \[ \Gamma_{\gamma}(\tau)=\tau^{-\gamma}J_{\gamma}(\tau),\]
     where  $J_{\gamma}(\tau)$ is the  Bessel function, is entire in $\gamma$ and $\tau$, in particular,
     \begin{eqnarray}
\label{Besselest}
 |J_{\gamma}(\tau)|  \lesssim \tau^{\gamma}, \quad 0<\tau <1.
 \end{eqnarray}

     The Weber's function $Y_{\gamma}(\tau)$ satisfies for every integer $n$
     \[ Y_{n}(\tau)= \frac2{\pi}J_{n}(\tau)\ln\tau + A_n(\tau),\]
     where $\tau^{n}A_{n}(\tau)$ is entire,  non-null for $\tau=0$  and
\begin{eqnarray}
\label{SecondBesselest}  |A_n(\tau)|  \lesssim
\tau^{-n}, \quad 0<\tau <1.
\end{eqnarray}
     The Hankel functions $H^{\pm}_{\gamma}=J_{\gamma}\pm iY_{\gamma}$ satisfy
    \[2(H^{\pm}_{\gamma})'(\tau)=H^{\pm}_{\gamma-1}(\tau)- H^{\pm}_{\gamma+1}(\tau), \quad  and \quad \tau (H^{\pm}_{\gamma})'(\tau)=\tau H^{\pm}_{\gamma-1}(\tau)-\gamma H^{\pm}_{\gamma}(\tau).\]
     Moreover,  $H^{\pm}_{\gamma}(\tau), \tau\geq K$ can be written as
\begin{equation}\label{HankelHigh}
 H^{\pm}_{\gamma}(\tau)=e^{\pm i\tau} a_{\gamma}^{\pm}(\tau),
  \end{equation}
  where  $a_{\gamma}^{\pm}(\tau)  \in S^{-\frac12}(K, \infty)$  is a classical symbol of order $-\frac12$.

  For small arguments $0<\tau\leq K <1$ we have
  \begin{equation}\label{HankelLow}
  |H^{\pm}_{\gamma}(\tau)|\lesssim \begin{cases}
  \tau^{-|\gamma|}, \qquad  if \ & \ \gamma\neq 0\\
  -\ln(\tau), \qquad  if & \gamma= 0.
  \end{cases}
  \end{equation}

  \end{lemma}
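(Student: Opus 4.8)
The plan is to obtain every clause from the classical power-series and asymptotic representations of the cylinder functions as tabulated in \cite{Bateman}, so that each bound is read off from one such representation; the work is thus organizational, and I indicate how the pieces fit together.

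First I would settle the entirety of $\Gamma_\gamma$ and estimate \eqref{Besselest}. Starting from the everywhere-convergent series
\[ J_\gamma(\tau)=\sum_{m=0}^\infty \frac{(-1)^m}{m!\,\Gamma(m+\gamma+1)}\Big(\frac\tau2\Big)^{2m+\gamma}, \]
one has $\Gamma_\gamma(\tau)=\tau^{-\gamma}J_\gamma(\tau)=\sum_{m\ge0} c_m(\gamma)\,\tau^{2m}$ with $c_m(\gamma)=(-1)^m\,2^{-(2m+\gamma)}/(m!\,\Gamma(m+\gamma+1))$. Since $1/\Gamma$ and $2^{-\gamma}=e^{-\gamma\ln2}$ are entire, each $c_m$ is entire in $\gamma$, and a Weierstrass $M$-test on compact subsets of the $(\gamma,\tau)$-space gives joint analyticity of $\Gamma_\gamma$. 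Then \eqref{Besselest} follows because this entire function is bounded on $\{0\le\tau\le1\}$ for $\gamma$ in a compact set, whence $|J_\gamma(\tau)|=|\tau^\gamma\Gamma_\gamma(\tau)|\lesssim\tau^\gamma$.

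Next I would record the integer-order Weber function. For $n\in\N$ the Hankel expansion writes $Y_n(\tau)$ as $\frac2\pi J_n(\tau)\ln(\tau/2)$ plus the finite sum $-\frac1\pi\sum_{k=0}^{n-1}\frac{(n-k-1)!}{k!}(\tau/2)^{2k-n}$ and an everywhere-convergent power series in $\tau^2$ multiplied by $\tau^n$. Splitting $\ln(\tau/2)=\ln\tau-\ln2$ and absorbing the smooth piece $-\frac{2\ln2}\pi J_n$ together with the two sums into $A_n$ produces $Y_n(\tau)=\frac2\pi J_n(\tau)\ln\tau+A_n(\tau)$. Multiplying by $\tau^n$ makes each summand an entire function of $\tau$ (using again that $\tau^{-n}J_n=\Gamma_n$ is entire, so $\tau^nJ_n=\tau^{2n}\Gamma_n$), while the $k=0$ term of the finite sum contributes the nonzero constant $-\tfrac1\pi(n-1)!\,2^{n}$ to $\tau^nA_n(0)$; boundedness of this entire function on $\{0\le\tau\le1\}$ then gives \eqref{SecondBesselest}.

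Finally I turn to the Hankel functions. The two displayed recurrences are the standard contiguous relations $C_{\gamma-1}+C_{\gamma+1}=\tfrac{2\gamma}\tau C_\gamma$ and $C_{\gamma-1}-C_{\gamma+1}=2C_\gamma'$, valid for every solution $C_\gamma$ of Bessel's equation; since $H^\pm_\gamma=J_\gamma\pm iY_\gamma$ is such a solution they hold for it, and eliminating $C_{\gamma+1}$ between them yields $\tau(H^\pm_\gamma)'=\tau H^\pm_{\gamma-1}-\gamma H^\pm_\gamma$. For large argument I would invoke the classical polyhomogeneous expansion $H^\pm_\gamma(\tau)\sim\sqrt{2/(\pi\tau)}\,e^{\pm i(\tau-\gamma\pi/2-\pi/4)}\sum_{k\ge0}(\pm i)^k a_k(\gamma)\tau^{-k}$ and set $H^\pm_\gamma(\tau)=e^{\pm i\tau}a^\pm_\gamma(\tau)$ as in \eqref{HankelHigh}. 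The step I expect to be the main obstacle is verifying the symbol estimates $|\partial_\tau^j a^\pm_\gamma(\tau)|\lesssim\tau^{-1/2-j}$ certifying $a^\pm_\gamma\in S^{-1/2}(K,\infty)$; these come from differentiating the asymptotic series term by term and bounding the remainder, which is precisely the classical-symbol content of the expansion. For small argument the leading behaviours $J_\gamma(\tau)\sim\tau^\gamma/(2^\gamma\Gamma(\gamma+1))$ and, for $\gamma\ne0$, $Y_\gamma(\tau)\sim\tau^{-|\gamma|}$ (respectively $Y_0(\tau)\sim\frac2\pi\ln\tau$) dominate; since $\tau^\gamma\lesssim\tau^{-|\gamma|}$ on $(0,K)$, bounding $|H^\pm_\gamma|=|J_\gamma\pm iY_\gamma|$ by the larger of the two contributions gives \eqref{HankelLow}.
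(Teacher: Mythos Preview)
Your proposal is correct and self-contained. The paper, however, does not give a proof of this lemma at all: it is placed in the Appendix with the preamble ``In \cite{Bateman} one can find the following properties for Bessel and Hankel functions,'' i.e.\ the lemma is simply quoted from the standard reference. So there is no proof in the paper to compare against; your sketch supplies the argument the paper chose to omit, and each step (power series for $\Gamma_\gamma$, the Hankel expansion of $Y_n$ with the $\ln\tau$ split, the contiguous relations, the large-$\tau$ asymptotic expansion giving the $S^{-1/2}$ symbol, and the small-$\tau$ leading behaviour of $J_\gamma$ and $Y_\gamma$) is the expected classical route from Bateman--Erd\'elyi.

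One small refinement: your argument for $\tau^nA_n(0)\neq0$ via the $k=0$ term of the finite singular sum covers $n\ge1$; for $n=0$ that sum is empty and the nonvanishing of $A_0(0)$ instead comes from the Euler--Mascheroni constant contribution in the $Y_0$ expansion together with the $-\tfrac{2\ln2}{\pi}J_0(0)$ term you absorbed. This is a trivial addendum and does not affect the overall correctness.
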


%%%%%%%%%%%%%%%%%%%%%%%%

\end{document}